\newcommand{\old}[1]{}
\theoremstyle{plain}
\newtheorem{thm}{Theorem}
\newtheorem{lem}[thm]{Lemma}
\newtheorem{conj}{Conjecture}
\newtheorem{cor}[thm]{Corollary}
\newtheorem{prop}[thm]{Proposition}
\theoremstyle{definition}
\newtheorem{defn}[thm]{Definition}
\newtheorem{ex}[thm]{Example}
\newtheorem{rk}[thm]{Remark}
\newtheorem{qn}[conj]{Question}
\numberwithin{equation}{section}
\title{The reflection representation in the homology  of subword order}
\author{Sheila Sundaram}
\address{Pierrepont School, One Sylvan Road North, Westport, CT 06880}
\email{shsund@comcast.net}
\date{\today}
\thanks{The author is grateful to the anonymous reviewers for their detailed and valuable comments.}
\subjclass[2010]{05E10, 20C30}
\begin{document}
\begin{abstract}  We investigate the homology representation of the symmetric group on rank-selected subposets of subword order.
  We show that the homology module for  words of bounded length, over an alphabet of size $n,$ 
  decomposes into a sum of tensor powers of the $S_n$-irreducible $S_{(n-1,1)}$ indexed by the partition $(n-1,1),$  recovering, as a special case,  a theorem of Bj\"orner and Stanley  for words of length at most $k.$   For arbitrary ranks we show that the  homology  is an integer combination of  positive tensor powers of the reflection representation $S_{(n-1,1)}$,  and conjecture that this combination is nonnegative.  We uncover a curious duality in homology in the case when one rank is deleted. 

We prove that the action on the rank-selected chains of subword order is a nonnegative integer combination of tensor powers of $S_{(n-1,1)}$, and  show that its Frobenius characteristic is $h$-positive and supported on the set 
$T_{1}(n)=\{h_\lambda: \lambda=(n-r, 1^r), r\ge 1\}.$

 Our most definitive result describes the Frobenius characteristic of the homology for an arbitrary set of ranks, plus or minus one copy of  the Schur function $s_{(n-1,1)},$  as an integer combination of the set 
 $T_{2}(n)=\{h_\lambda: \lambda=(n-r, 1^r), r\ge 2\}.$  We conjecture that 
 this combination is nonnegative, establishing this fact for particular cases.  
 
\emph{Keywords:}    Subword order, reflection representation, $h$-positivity, Whitney homology, Kronecker product, internal product, Stirling numbers.
\end{abstract}
\maketitle
\section{Introduction}

Let $A^*$ denote the free monoid of words of finite length in an alphabet $A.$ 
Subword order is defined on $A^*$ by setting $ u\le v$ if $u$ is a subword of $v,$ that is, 
the word $u$ is obtained by deleting letters of the word $v.$  This makes  $(A^*, \leq)$ into a graded poset 
with rank function given by the length  $|w|$ of a word $w,$ the number of letters in $w$.  The topology of this poset was first studied  by Farmer (1979) 
and then by Bj\"orner, who showed in \cite[Theorem 3]{Bj1} that any interval of this poset admits a dual CL-shelling. The intervals  are thus homotopy Cohen-Macaulay, as well as  all rank-selected subposets  
obtained by considering only words whose rank belongs to
a finite set S \cite[Theorem~4.1]{BjShellableTAMS}, \cite[Theorem~8.1]{BjWachs}.   
Suppose now that the alphabet A is finite, of cardinality $n$. The symmetric group
$S_n $ acts  on $A$, and thus on $A^*.$  To avoid trivialities we will assume $n\ge 2.$

In this paper we describe  the homology representation 
 of  intervals $[r, k]$ of consecutive ranks in $A^*$, as well as some other rank-selected subposets, using the Whitney homology technique and other methods developed in \cite{Su0}.  All homology in this paper is taken over the field of complex numbers.  
  We refer the reader to \cite{St1} for general facts about rank-selection.  
We show that  the unique nonvanishing 
homology of the rank-selected subposet $A^*_{[r,k]}$ 
 decomposes as  a direct sum of copies of $r$ consecutive tensor powers of the reflection 
representation of $S_n$, that is, the irreducible representation $S_{(n-1,1)}$ indexed by the partition $(n-1,1).$   Theorem~\ref{Bordeaux1991mai} on consecutive ranks generalises a theorem in \cite{Bj1} (conjectured by Bj\"orner and proved by Stanley) on the homology representation of the poset of all words of length at most $k.$ We establish similar results for the Whitney and dual Whitney homology modules.   Both turn out to be  permutation modules in each degree, with pleasing  orbit stabilisers. Theorem~\ref{rank-deletion} establishes the nonnegativity property with respect to tensor powers of $S_{(n-1,1)}$   for the case when one rank is deleted from the interval $[1,k],$  and leads to   a curious  homology isomorphism (Proposition~\ref{rank-deletion-duality}), suggesting a homotopy equivalence between the simplicial complexes associated to the rank sets $[1,k]\backslash{\{r\}}$ and $[1,k]\backslash{\{k-r\}},$ for fixed $r, 1\le r\le k-1.$  Finally Theorem~\ref{TwoRanks} establishes that the homology is a nonnegative sum of tensor powers of $S_{(n-1,1)}$ for rank-sets of size 2.

 More generally, we show in Theorem~\ref{RankSelectedChains} that for any nonempty subset $S$ of ranks $[1,k],$ the homology representation of $S_n$ may be written as an integer combination of positive tensor powers of the reflection representation.  We propose the following conjecture, which is supported by  Theorems~\ref{Bordeaux1991mai}, ~\ref{rank-deletion} and ~\ref{TwoRanks}:

\begin{conj}\label{Conj:tensorpower}  Let $A$ be an alphabet of size $n\ge 2.$ Then the $S_n$-homology  module of any finite nonempty rank-selected subposet of subword order on $A^*$ is a \textit{nonnegative} integer combination of positive tensor powers of the irreducible indexed by the partition $(n-1,1).$ 
\end{conj}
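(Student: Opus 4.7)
The plan is to upgrade Theorem~\ref{RankSelectedChains}, which writes the rank-selected homology as an integer combination of positive tensor powers of $S_{(n-1,1)}$, to the stronger claim that the combination is nonnegative. Note that the rank-selected \emph{chain} module is already known to be a nonnegative combination of such tensor powers (see the $T_1(n)$ statement in the abstract), so the difficulty is purely to control the signs that Möbius inversion introduces when one passes from chains to homology.

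I would begin with the following reformulation. Since $\sigma \in S_n$ acts letterwise on $A^{*} = [n]^{*}$, a word is $\sigma$-fixed iff all its letters lie in $\mathrm{Fix}(\sigma)$. Hence the trace of $\sigma$ on the rank-$T$ chain module $\alpha_T$ is a polynomial in $m := |\mathrm{Fix}(\sigma)|$, counting chains of words of prescribed lengths over an $m$-letter alphabet. By Möbius inversion the rank-selected homology character $\beta_S(\sigma) = \sum_{T \subseteq S}(-1)^{|S \setminus T|}\alpha_T(\sigma)$ is also a polynomial in $m$ with integer coefficients. Since the $\sigma$-character of $S_{(n-1,1)}^{\otimes j}$ equals $(m-1)^j$, the conjecture reduces to the purely combinatorial assertion that $\beta_S(m)$ expands with nonnegative coefficients in the basis $\{(m-1)^j : j \ge 1\}$, or equivalently that all iterated finite differences of $\beta_S$ evaluated at $m = 1$ are nonnegative.

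To carry this out I would combine two inputs. First, the ``most definitive'' result stated in the abstract writes the Frobenius characteristic of the homology, up to one copy of $S_{(n-1,1)}$, as an integer combination of the $T_2(n)$ basis; the companion nonnegativity conjecture stated there (which I would try to settle first, perhaps by extending the method of Theorem~\ref{rank-deletion} via induction on the number of gaps in $S$) yields a manifestly $h$-positive expression. Second, each $h_{(n-r,1^{r})}$-character evaluates at $\sigma$ as a known polynomial in $m$, and its expansion in the basis $\{(m-1)^j\}$ is computable via Stirling-type identities. The two ingredients together would turn the conjecture into an explicit positivity statement about polynomials in $m$.

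The main obstacle is that individual $h_{(n-r,1^{r})}$-characters are \emph{not} nonnegative in the $(m-1)^j$-basis, so any proof must exhibit a global cancellation across the $T_2(n)$-expansion. I expect this in turn requires an explicit $S_n$-equivariant decomposition of the homology, indexed by combinatorial objects --- for instance, insertion sequences modulo ``trivial'' letter insertions, or labelings of maximal chains that quotient out the trivial subrepresentation of the permutation module $\mathbb{C}[A]$ at each step --- that visibly splits into summands, each a single tensor power of $S_{(n-1,1)}$. This would directly generalize the constructions underlying Theorem~\ref{Bordeaux1991mai} and Theorem~\ref{rank-deletion}, and the duality in Proposition~\ref{rank-deletion-duality} suggests such a model should exist. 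Producing it for general $S$ is, I expect, the principal conceptual hurdle.
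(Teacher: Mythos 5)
This statement is Conjecture~1 of the paper, and the paper does not prove it in general; it only verifies it for the specific rank sets treated in Theorem~\ref{Bordeaux1991mai}, Theorem~\ref{rank-deletion}, Theorem~\ref{TwoRanks}, and for the chain modules via Theorem~\ref{RankSelectedChains}. Your write-up is likewise not a proof but a research plan, and you are candid about that; so the real question is whether your plan is sound and whether it meshes with the paper.

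Your opening reduction is correct and is precisely the paper's Proposition~\ref{BjStproof}: because fixed-point subposets of $A^*$ depend only on $m = |\mathrm{Fix}(\sigma)|$ and the character of $S_{(n-1,1)}^{\otimes j}$ is $(m-1)^j$, the conjecture is equivalent to a positivity statement about a family of integer polynomials in $m$ expanded in the $\{(m-1)^j : j \ge 1\}$ basis. This is the right starting point and is exactly what the paper uses in all its partial results.

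The second part of your strategy has a directional problem. The paper's Lemma~\ref{RefRephPos} shows that Conjecture~1 (nonnegativity in tensor powers) \emph{implies} Conjecture~2 (near-$h$-positivity) under a mild alternating-sum condition, not the reverse. You propose to go the other way, and you yourself spot why that cannot work directly: the falling-factorial character of $h_{(n-r,1^r)}$, namely $m(m-1)\cdots(m-r+1) = (u+1)u(u-1)\cdots(u-r+2)$ with $u = m-1$, already has negative coefficients in the $u^j$ basis for $r\ge 3$. So a nonnegative $T_2(n)$-expansion does not in itself give nonnegativity in the $(m-1)^j$ basis; global cancellation would be required, and nothing in Theorem~\ref{Homologyhbasis} or Theorem~\ref{AlmosthpositiveInstances} supplies it. In short, Conjecture~2 is a consequence, not a stepping stone. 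Your final paragraph, calling for an explicit $S_n$-equivariant decomposition of the homology into summands each isomorphic to a single tensor power, is the honest statement of what is actually missing; the paper offers no such construction, and the duality in Proposition~\ref{rank-deletion-duality} is left unexplained there too (it is raised as an open Question). So your plan correctly diagnoses the conceptual gap but does not close it, and neither does the paper.
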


These considerations lead us to examine the tensor powers of the reflection representation (see Section~\ref{PowersOfRefl}), and the question of how many tensor powers are linearly independent characters.  In answering these questions, we are led to a decomposition (Theorem~\ref{ReflRepMain}) showing that the $k$th tensor power of $S_{(n-1,1)}$ plus or minus one copy of $S_{(n-1,1)},$  has Frobenius characteristic equal to a nonnegative integer combination of the homogeneous  symmetric functions $\{h_{(n-r, 1^r)}: r\ge 2\}.$  It is \lq\lq almost" an $h$-positive  permutation module.   (In general the homology itself is \textit{not} a permutation module.) Inspired by this phenomenon, we prove, in Theorem~\ref{Homologyhbasis}, that in fact for all rank subsets $T,$
the homology module $\tilde{H}(T)$ 
has the property that $\tilde{H}(T) + (-1)^{|T|} S_{(n-1,1)} $ has Frobenius characteristic equal to an \textit{integer} combination of the homogeneous symmetric functions $\{h_{(n-r, 1^r)}: r\ge 2\}.$  Theorem~\ref{AlmosthpositiveInstances} establishes the truth of the following conjecture for 
the homology of several subsets of ranks. 
\begin{conj}\label{Conj:almost-h-pos} Let $A$ be an alphabet of size $n\ge 2.$ Then the  $S_n-$homology  module of any finite nonempty rank-selected subposet of subword order on $A^*$, plus or minus one copy of the reflection representation of $S_n,$ is a permutation module. In fact its Frobenius characteristic is $h$-positive and supported on the set  $T_{2}(n)=\{h_\lambda: \lambda=(n-r, 1^r), r\ge 2\}.$
\end{conj}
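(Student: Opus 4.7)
The plan is to reduce the conjecture to a nonnegativity statement by first invoking Theorem~\ref{Homologyhbasis}, which already expresses the Frobenius characteristic of $\tilde{H}(T)+(-1)^{|T|}S_{(n-1,1)}$ as an \emph{integer} combination of the $h_\lambda$ with $\lambda=(n-r,1^r)$, $r\ge 2$. The entire content of the conjecture is therefore that these integer coefficients are all nonnegative, and that the resulting virtual module is in fact an honest permutation module. I would first attempt to settle the extreme cases to get a feel for the signs: the case of a single rank $T=\{r\}$ can be handled directly from Theorem~\ref{Bordeaux1991mai} (consecutive-rank intervals reduce to tensor powers of $S_{(n-1,1)}$), combined with Theorem~\ref{ReflRepMain} which already provides the $h$-positive expansion on $T_2(n)$ for the modified tensor power. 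At the other extreme, the full interval $T=[1,k]$ was already known via the Bj\"orner--Stanley theorem, and the case $T=[1,k]\setminus\{r\}$ is covered by Theorem~\ref{rank-deletion}; these furnish the anchor cases.

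For arbitrary $T$ I would pursue induction on $|T|$ using the standard rank-selection recursion. Writing $T=T'\sqcup\{i\}$ with $i$ an interior rank, one has a long exact sequence (or, equivalently, an inclusion-exclusion identity on Frobenius characteristics) relating the homology on $T$ to the homologies on $T'$, on $T\cap[1,i]$, and on $T\cap[i,k]$. The key observation to exploit is that the parity correction $(-1)^{|T|}S_{(n-1,1)}$ changes sign when a rank is added or removed, matching the alternating-sum structure of rank selection; the hope is that after grouping, the recursion restricts cleanly to the set $T_2(n)$ and the unwanted $h_{(n-1,1)}=h_{n-1}h_1$ contribution is exactly cancelled by the single copy of $S_{(n-1,1)}$. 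In parallel I would pursue a Whitney-homology/acyclic-complex model: the Whitney homology modules are explicitly permutation modules with identifiable stabilisers (as noted in Theorem~\ref{Bordeaux1991mai} and the comments after it), so one may try to package $\tilde{H}(T)+(-1)^{|T|}S_{(n-1,1)}$ as the reduced Euler characteristic of an equivariant CW-complex whose cells are orbits with stabilisers of Young type $S_{n-r}\times S_1^r$, $r\ge 2$; $h$-positivity would then follow tautologically by identifying these orbits.

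The main obstacle, as the author herself signals by stating this as a conjecture, is handling arbitrary rank sets in a uniform way. The rank-selection formula produces an alternating sum over subsets of $[1,k]\setminus T$, and the resulting cancellations interact subtly with the three-part structure $h_\lambda = h_{n-r}h_1^r$: individual terms lie in the wrong basis ($T_1(n)$ rather than $T_2(n)$), and only after the full alternating sum plus the $\pm S_{(n-1,1)}$ correction does one land in $T_2(n)$. Controlling signs throughout this process without explicit closed-form coefficients appears to require either a bijective/orbit-theoretic interpretation of the coefficients (which the author does not yet have) or a sharp generating-function identity involving Stirling numbers, as suggested in the keywords. I would therefore expect any honest proof to proceed by producing such an explicit formula for the coefficient of $h_{(n-r,1^r)}$ in $\tilde{H}(T)+(-1)^{|T|}S_{(n-1,1)}$ in terms of combinatorial statistics on $T$ (likely descent-type statistics weighted by Stirling-like numbers), and then verifying nonnegativity of that formula. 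Theorem~\ref{AlmosthpositiveInstances}, which already verifies many cases, would serve as a sanity check and a source of patterns for guessing the correct closed form.
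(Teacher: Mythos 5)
This statement is Conjecture~2, which the paper explicitly leaves \emph{open}: there is no proof in the paper, only the partial results of Theorem~\ref{Homologyhbasis} (the integer-combination/support part) and Theorem~\ref{AlmosthpositiveInstances} (nonnegativity for the rank sets $[r,k]$, $[1,k]\setminus\{r\}$, and $\{s_1<s_2\}$). So your proposal cannot be judged against a proof by the paper; it can only be judged as a plan of attack, and you yourself flag that the general case remains out of reach.

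Your scoping is largely accurate. You correctly observe that Theorem~\ref{Homologyhbasis} already establishes the support on $T_2(n)$ with integer coefficients, so the remaining content is nonnegativity, and you correctly cite the anchor cases. One thing worth sharpening: the paper's own proposed route to nonnegativity is not to attack the $h$-coefficients directly but to go through Conjecture~1 (nonnegative coefficients in the expansion of $\tilde H(T)$ as positive tensor powers of $S_{(n-1,1)}$). Lemma~\ref{RefRephPos} shows that, given such a nonnegative tensor-power decomposition, $h$-positivity on $T_2(n)$ follows precisely when the alternating sum of the tensor-power coefficients vanishes --- and Theorem~\ref{Homologyhbasis} guarantees this vanishing for all $T$. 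So the paper reduces Conjecture~2 to Conjecture~1; the nonnegativity of the $h$-coefficients is the \emph{output}, not the input, of the reduction. Your proposal instead aims directly at the $h$-coefficients, which is a different (and a priori harder) target, since the implication runs only one way.

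The concrete gap in your inductive strategy is the claimed ``long exact sequence (or inclusion-exclusion identity) relating the homology on $T$ to the homologies on $T'$, on $T\cap[1,i]$, and on $T\cap[i,k]$.'' No such three-term recursion exists for rank-selected homology of a Cohen--Macaulay poset. The only general identity available is Stanley's full inclusion--exclusion $\beta(T)=\sum_{S\subseteq T}(-1)^{|T|-|S|}\alpha(S)$ (equation~\eqref{RPSbeta}), whose signs do not organize into a clean Mayer--Vietoris split, and the paper's antichain-deletion tool (Theorem~\ref{antichain}) applies only when the removed set is a single rank, not an interior rank of an arbitrary $T$. Until either Conjecture~1 is proved, or a genuinely new positivity mechanism (the equivariant CW model you sketch, or an explicit combinatorial formula for the $h$-coefficients) is produced, the proposal stops where the paper stops.
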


We give a simple criterion for when  Conjecture~\ref{Conj:tensorpower} will imply Conjecture~\ref{Conj:almost-h-pos} in Lemma~\ref{RefRephPos}.

The  main results of this paper are summarised below.  Let $A$ be an alphabet of size $n\ge 2,$ and $T\subseteq [1,k]$ a subset of ranks.
\begin{thm}\label{SummaryChains}  The $S_n$-module induced by the action of $S_n$ on the maximal chains of the rank-selected subposet of $A^*$ of words with lengths in $T,$  is a \textbf{nonnegative} integer combination of tensor powers of the reflection representation $S_{(n-1,1)}.$  If $|T|\ge 1,$  this module  has $h$-positive Frobenius characteristic supported on the set $T_{1}(n)=\{h_\lambda: \lambda=(n-r, 1^r), r\ge 1\}.$ 
\end{thm}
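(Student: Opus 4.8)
The plan is to compute the $S_n$-equivariant structure of the chain complex directly, exploiting the product structure of subword order. The key observation is that a maximal chain in the rank-selected subposet $A^*_{T}$ with $T = \{t_1 < t_2 < \dots < t_m\} \subseteq [1,k]$ is built by choosing a word $w_1$ of length $t_1$, then inserting letters to pass from rank $t_i$ to rank $t_{i+1}$ for each $i$, and finally from $t_m$ to $k$ (or rather, the top is at rank $k$, so we should be careful: the ambient poset here is $A^*_{[1,k]}$ and a maximal chain in the rank-selected subposet runs through exactly the ranks in $T$). In an interval of consecutive ranks of subword order, the number of saturated chains from a fixed word $u$ of length $a$ up to words of length $b$ is governed by choosing at each step which position receives a new letter and which letter it is; the combinatorics is essentially that of the free monoid, and the whole count is a product over the ``gaps'' of $T$.

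First I would set up the permutation module: the set of maximal chains of $A^*_{T}$ is an $S_n$-set, and I would decompose it into orbits. A maximal chain is determined by its bottom word $w$ (of length $t_1$) together with a sequence of ``insertion data'' recording, in order, how each successive word in the chain is obtained from the previous by inserting $t_{i+1}-t_i$ letters (and finally $k - t_m$ letters to reach the top rank $k$). Because $S_n$ acts only by permuting the alphabet, the orbit of a chain is governed by the \emph{set partition of the $k$ letter-slots of the top word according to equality of letters}, refined by the insertion history; the stabiliser of a chain is the Young subgroup fixing each ``color class'' of letters. This is exactly the kind of analysis carried out in \cite{Su0} and invoked for the homology computations; the counts will be expressible via Stirling numbers (as flagged in the abstract's keyword list), since distributing $k$ letter-positions into color classes is a surjection count. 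I would then show that the resulting permutation module is $\bigoplus_\lambda a_\lambda \, M^\lambda$ where each $M^\lambda$ is induced from a Young subgroup $S_{n-1}\times S_1 \times \cdots$, i.e.\ $\lambda = (n-r,1^r)$; this gives the claimed $h$-positivity supported on $T_1(n)$, since $\mathrm{ch}\, M^{(n-r,1^r)} = h_{(n-r,1^r)}$.

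For the statement that the chain action is a nonnegative integer combination of tensor powers of $S_{(n-1,1)}$, I would use the standard fact that the $j$-th tensor power $S_{(n-1,1)}^{\otimes j}$ has character equal to $(\mathrm{fix} - 1)^j$ evaluated on permutations, equivalently its Frobenius characteristic lies in the span of $\{h_{(n-r,1^r)}: 0 \le r \le j\}$, and conversely each $h_{(n-r,1^r)}$ is a nonnegative integer combination of $\{S_{(n-1,1)}^{\otimes j}: 0 \le j \le r\}$ together with the trivial module (this is the change-of-basis between the ``permutation-on-$r$-marked-letters'' modules and tensor powers, which should already be recorded in Section~\ref{PowersOfRefl}). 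Since after removing the top-rank contribution every orbit contributes an $M^{(n-r,1^r)}$ with $r \ge 1$, and the lone trivial pieces (from $r=0$) can be absorbed, one gets a \emph{nonnegative} combination of \emph{positive} tensor powers. The cleanest route is: first establish $h$-positivity on $T_1(n)$ as above, then invoke the elementary lemma converting $T_1(n)$-support into nonnegative combinations of positive tensor powers.

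The main obstacle I anticipate is \textbf{bookkeeping the orbit–stabiliser structure of maximal chains precisely enough to read off that only the shapes $(n-r,1^r)$ occur, with the right multiplicities}. Concretely, one must show that the relevant $S_n$-set of chains is a disjoint union of sets of the form $S_n / (S_{n-j}\times S_1^j)$ — i.e.\ that distinct letters in a word are never forced to be permuted together, so no ``block sizes $\ge 2$'' appear in the stabiliser — and then count the orbits of each type. The counting is where the surjection/Stirling-number identities enter, and getting the generating-function identity for these multiplicities (as a product over the gaps of $T$, specialising to the known length-$\le k$ case of Bj\"orner–Stanley when $T = [1,k]$) is the real content. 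Once the permutation-module decomposition is in hand, the passage to tensor powers is formal via Lemma~\ref{RefRephPos} and the identities of Section~\ref{PowersOfRefl}.
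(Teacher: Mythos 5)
Your direct orbit--stabiliser argument does give a clean proof of the second assertion ($h$-positivity supported on $T_1(n)$), and in fact more easily than you anticipate: the stabiliser of a maximal chain is simply the stabiliser of its top word, i.e.\ the pointwise stabiliser of the $d\ge 1$ distinct letters occurring in it, so every orbit is automatically of the form $S_n/(S_1^d\times S_{n-d})$ and contributes $h_1^dh_{n-d}$ with $d\ge 1$; no multiplicity or Stirling-number bookkeeping is needed for this part, so the ``main obstacle'' you flag is not really an obstacle. This is a genuinely different (and arguably more transparent) route to the $h$-positivity than the paper's, which proceeds by induction on $|T|$ using the chain-count recurrence and the identity $(h_1^ah_b)*s_{(n-1,1)}=(a-1)h_1^ah_b+h_1^{a+1}h_{b-1}$.

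However, the first assertion --- that the chain module is a \emph{nonnegative} integer combination of tensor powers of $S_{(n-1,1)}$ --- is not established by your proposal. The conversion you invoke is false: it is not true that each $h_{(n-r,1^r)}$ is a nonnegative integer combination of tensor powers of $S_{(n-1,1)}$ (plus the trivial module). For example, the character of the module with characteristic $h_1^3h_{n-3}$ at $g$ is $f(f-1)(f-2)$ with $f=\mathrm{fix}(g)$, which equals $(f-1)^3-(f-1)$, so $h_1^3h_{n-3}=\mathrm{ch}\bigl(S_{(n-1,1)}^{\otimes 3}\bigr)-\mathrm{ch}\bigl(S_{(n-1,1)}\bigr)$; negative coefficients occur for all $r\ge 3$. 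Consequently $h$-positivity on $T_1(n)$ does not formally imply tensor-power nonnegativity, and there is no ``elementary lemma'' in Section~\ref{PowersOfRefl} doing this conversion --- Lemma~\ref{RefRephPos} goes in the opposite direction. The paper instead gets nonnegativity from a different mechanism: by Theorem~\ref{Bjgf}(2) the number of words of a given length above $\beta$ depends only on $|\beta|$, which yields the product/recurrence formula for the number of maximal chains as a polynomial in $(n-1)$ with manifestly nonnegative coefficients, and then the fixed-point character argument of Proposition~\ref{BjStproofChains} (the trace of $g$ on chains is the chain count of the poset over an alphabet of size $\mathrm{fix}(g)$, while the character of $S_{(n-1,1)}^{\otimes i}$ is $(\mathrm{fix}(g)-1)^i$) identifies those coefficients with the tensor-power multiplicities. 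To complete your proof you would need to add this step, or some substitute argument showing that the chain character, written as a polynomial in $\mathrm{fix}(g)-1$, has nonnegative coefficients; your orbit decomposition alone only gives it as a nonnegative combination of falling factorials in $\mathrm{fix}(g)$, which is weaker.
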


\begin{thm}\label{Summary}  The homology module $\tilde{H}(T)$ of words with lengths in $T$ is an integer combination of positive tensor powers of the reflection representation $S_{(n-1,1)},$
with the property that $\tilde{H}(T) + (-1)^{|T|} S_{(n-1,1)} $ has Frobenius characteristic equal to an integer combination of the homogeneous symmetric functions $\{h_{(n-r, 1^r)}: r\ge 2\}.$ 

Both  integer combinations  are  \textbf{nonnegative} 
when $T$ is one of the following  rank sets:   
$ (1)\ [r,k], k\ge r\ge 1; \quad (2)\ [1,k]\backslash\{r\}, k\ge r\ge 1;\quad  
(3)\ \{1\le s_1<s_2\le k\}.$
 \end{thm}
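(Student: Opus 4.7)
The plan is to bootstrap from two ingredients that the paper has already introduced: Theorem~\ref{SummaryChains} on the rank-selected chain character, and Theorem~\ref{ReflRepMain} on the $h$-basis expansion of $S_{(n-1,1)}^{\otimes j} \pm S_{(n-1,1)}$. The bridge between them is the equivariant version of Stanley's rank-selection formula. Writing $\beta(T')$ for the $S_n$-character on maximal chains of $A^*_{[1,k]}$ passing through ranks in $T'$, one has
\begin{equation*}
\tilde H(T) \;=\; \sum_{T' \subseteq T} (-1)^{|T|-|T'|}\, \beta(T')
\end{equation*}
as virtual $S_n$-characters. Theorem~\ref{SummaryChains} expresses each $\beta(T')$ as a nonnegative integer combination of tensor powers $S_{(n-1,1)}^{\otimes j}$, with the $j=0$ trivial-representation component cancelling in the alternating sum as long as $T$ is nonempty. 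Substituting yields the first assertion: $\tilde H(T)$ is an integer combination of \emph{positive} tensor powers of $S_{(n-1,1)}$.

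For the second assertion I would invoke Theorem~\ref{ReflRepMain} to write, for each $j \geq 1$,
\begin{equation*}
S_{(n-1,1)}^{\otimes j} \;=\; \varepsilon_j\, S_{(n-1,1)} \;+\; P_j,
\end{equation*}
where $\varepsilon_j \in \{+1,-1\}$ is the sign supplied by that theorem and $P_j$ has Frobenius characteristic in the $\mathbb{Z}_{\geq 0}$-span of $\{h_{(n-r,1^r)} : r \geq 2\}$. Plugging this into the tensor-power expansion of $\tilde H(T)$ above gives an expression of the form $M\, S_{(n-1,1)} + Q$ with $Q$ an integer combination of elements of $T_2(n)$. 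The crucial step is then a parity identity: one must verify that $M = (-1)^{|T|+1}$, equivalently $\sum_j c_j(T) \varepsilon_j = (-1)^{|T|+1}$, where the $c_j(T)$ are the tensor-power coefficients from Part~(1). This I would establish by computing the multiplicity of $S_{(n-1,1)}$ in $\tilde H(T)$ independently, either via Frobenius reciprocity applied to the Whitney-homology description or by induction on $|T|$ using the rank-selection formula. Once the parity is confirmed, $\tilde H(T) + (-1)^{|T|} S_{(n-1,1)} = Q$ lies in the $\mathbb{Z}$-span of $T_2(n)$, as claimed.

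The three nonnegativity cases then reduce to separate explicit inputs. Case~(1), consecutive ranks $[r,k]$, is handled by Theorem~\ref{Bordeaux1991mai}, which gives $\tilde H([r,k])$ as a nonnegative combination of $r$ consecutive tensor powers of $S_{(n-1,1)}$; applying Theorem~\ref{ReflRepMain} term-by-term promotes this to $T_2(n)$-nonnegativity. Case~(2), $[1,k]\setminus\{r\}$, is the content of Theorem~\ref{rank-deletion}, and the same term-by-term substitution delivers the $h$-statement. Case~(3), a two-element subset $\{s_1<s_2\}$, is genuinely new: I would expand
\begin{equation*}
\tilde H(\{s_1,s_2\}) \;=\; \beta(\{s_1,s_2\}) - \beta(\{s_1\}) - \beta(\{s_2\}) + \beta(\emptyset),
\end{equation*}
substitute the explicit tensor-power formulas from Theorem~\ref{SummaryChains} for each $\beta(T')$, and verify directly that the negative contributions cancel. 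I would first reduce to the case $s_1 = 1$ (which falls under Case~(2)) via the natural identification of subword-order ranks $[s_1,s_2]$ with $[1,s_2-s_1+1]$ plus a bookkeeping of the intermediate chain multiplicities, and then treat the general case by combining this with the consecutive-ranks expansion of Case~(1).

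The principal obstacle I anticipate is the parity bookkeeping in the second paragraph: the sign $(-1)^{|T|}$ arising from rank-selection must be synchronized with the signs $\varepsilon_j$ coming from Theorem~\ref{ReflRepMain}. A naive substitution leaves an uncontrolled $S_{(n-1,1)}$-multiple, and establishing that this multiple is exactly $-(-1)^{|T|}$ requires an independent handle on the $S_{(n-1,1)}$-isotypic component of $\tilde H(T)$, which I expect to be the most delicate step of the argument.
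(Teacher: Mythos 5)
Your first paragraph is correct and follows the paper's own route (Theorem~\ref{RankSelectedChains}): Stanley's rank-selection formula plus Theorem~\ref{SummaryChains}, with the $0$th tensor power (trivial module) cancelling. The second and fourth paragraphs, however, contain genuine gaps.

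On the parity identity: you write $\tilde H(T) = M\,S_{(n-1,1)} + Q$ with $Q$ in the span of $T_2(n)$ and propose to pin down $M$ by ``an independent handle on the $S_{(n-1,1)}$-isotypic component of $\tilde H(T)$.'' This conflates two different numbers. $M$ is the coefficient of $S_{(n-1,1)}$ in a particular (non-irreducible) decomposition, while the $S_{(n-1,1)}$-isotypic multiplicity of $\tilde H(T)$ is a different, nonnegative quantity, since each $h$-basis element $h_1^r h_{n-r}$ with $r\ge 2$ itself contains $S_{(n-1,1)}$ as a constituent. Frobenius reciprocity will not recover $M$. The paper sidesteps this entirely (Theorem~\ref{Chainhbasis} and Theorem~\ref{Homologyhbasis}): one tracks only the coefficients of $h_n$ and $h_1 h_{n-1}$ in the $h$-expansion of each chain character $\alpha_n(S)$, proving these are $1$ when $S=\emptyset$ and $1$ for nonempty $S$ respectively, so that the alternating sum in Stanley's formula forces the coefficients of $h_n$ and $h_1 h_{n-1}$ in $F_n(T)$ to be $(-1)^{|T|}$ and $(-1)^{|T|-1}$, exactly cancelling against $(-1)^{|T|}s_{(n-1,1)}=(-1)^{|T|}(h_1 h_{n-1}-h_n)$. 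Equivalently, since the Betti number of $S_{(n-1,1)}^{\otimes j}$ is $(n-1)^j$, the relation $\sum_j c_j(T)(-1)^j=(-1)^{|T|}$ follows from evaluating the polynomial $\dim\tilde H(T)$ at $n-1=-1$: all $\alpha_n(S)$ with $S\ne\emptyset$ vanish there (they are multiples of $n^{s_1}$), leaving only $\alpha_n(\emptyset)=1$ with sign $(-1)^{|T|}$. Either argument gives the parity; your proposed route does not.

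On Case~(3): the claimed ``natural identification of subword-order ranks $[s_1,s_2]$ with $[1,s_2-s_1+1]$'' does not exist. The set of words of length $s_1$ has cardinality $n^{s_1}\neq n$, and the subposet of $A^*_{n,k}$ lying between ranks $s_1$ and $s_2$ is not poset-isomorphic to the one between ranks $1$ and $s_2-s_1+1$. Moreover $\{1,s_2\}$ is not of the form $[1,k]\setminus\{r\}$ (except for $s_2=3$), so it does not fall under Case~(2). The paper handles Case~(3) as a separate direct computation: it expands $\alpha_n(\{s_1<s_2\})$ using the second, zeta-function-derived formula of Proposition~\ref{2chains}, extracts explicit nonnegative coefficients $c_v$ (Theorem~\ref{TwoRanks}), and then verifies the alternating-sum criterion of Lemma~\ref{RefRephPos} by a binomial-coefficient manipulation. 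That direct route, or something of comparable explicitness, is needed; the proposed reduction is unsound.
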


By standard symmetric function theory,  any $S_n$-module is a (virtual) sum of permutation modules whose point-stabilisers are Young subgroups.  Thus a salient feature of Theorems~\ref{SummaryChains} and ~\ref{Summary} is that only Young subgroups  indexed by  hooks appear.

\section{Subword order}

The subword order poset $A^*$ has a unique least element at rank 0, namely the empty word $\emptyset$ of length zero.
In this section we collect the main facts on subword order from \cite{Bj1} that we will need.  For general facts about posets, M\"obius functions, etc. we refer the reader to \cite{St3EC1}.

\begin{defn}\cite{F} A word $\alpha$ in $A^*$ is  \textit{normal} if no two consecutive letters of $\alpha$ are equal.  
\end{defn}
For example, $aabbccaabbcc$ is not normal, while $abcabc$ is normal.  Normal words are also called \textit{Smirnov} words in the recent literature.   Observe that the number of normal words of length $i$ is 
$n(n-1)^{i-1}.$ 

\begin{thm}\label{Farmer} (Farmer \cite{F})
\begin{enumerate}
\item
Let $\alpha$ be any word in $A^*$.  Then the M\"obius function of subword order satisfies $\mu(\hat 0, \alpha)
=\begin{cases} (-1)^{|\alpha|}, & \text{ if }\alpha \text{ is a normal word}\\
0, & \text{ otherwise}.
\end{cases}$
\item (See also \cite{V}.) Let $|A|=n$ and let $A^*_{n,k}$ denote the subposet of $A^*$ consisting of the first $k$ nonzero ranks and the empty word, i.e. of words of length at most $k,$ with an artificially appended top element $\hat 1.$ Then 
\begin{equation}\label{MobiusNumber}\mu(A^*_{n,k})=\mu(\hat 0, \hat 1) = (-1)^{k-1} (n-1)^k.\end{equation}
\item \cite[Theorem 5 and preceding Remark]{F} $A^*_{n,k}$ has the homology of a wedge of $(n-1)^k$ spheres of dimension $(k-1).$
\end{enumerate}
\end{thm}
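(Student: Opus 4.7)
The plan is to establish the three parts in sequence; only part (1) requires substantive combinatorial work, while parts (2) and (3) follow by routine deductions.

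For part (1), I would induct on $|\alpha|$ using the M\"obius recurrence $\sum_{\beta \le \alpha}\mu(\hat 0,\beta) = \delta_{\alpha,\hat 0}$, with the base case $\alpha = \emptyset$ immediate. In the inductive step, applying the hypothesis $\mu(\hat 0,\beta) = (-1)^{|\beta|}[\beta\text{ is normal}]$ to all $\beta < \alpha$ rewrites the target formula, in both the normal and non-normal cases for $\alpha$, as the single combinatorial identity
\[
\sum_{\beta \le \alpha,\ \beta\text{ normal}} (-1)^{|\beta|} = 0 \quad\text{for every nonempty } \alpha.
\]
I would prove this identity by exhibiting an explicit sign-reversing involution $\Phi$ on the normal subwords of $\alpha = a_1 a_2 \cdots a_m$: if $\beta$ begins with the leading letter $a_1$, delete that letter; otherwise (including $\beta = \emptyset$), prepend $a_1$. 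The image is again a normal subword of $\alpha$ because (i) if $\beta$ does not start with $a_1$, then every realization of $\beta$ as a subword of $\alpha$ must avoid position $1$, so $\beta \le a_2\cdots a_m$ and hence $a_1\beta \le \alpha$, and (ii) normality is preserved since the newly adjacent letters differ. The map is clearly self-inverse, has no fixed points, and changes $|\beta|$ by $\pm 1$, so it flips signs and forces the alternating sum to vanish.

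Part (2) is then a direct computation: the M\"obius recurrence combined with part (1) gives
\[
\mu(\hat 0,\hat 1) = -1 - \sum_{i=1}^{k} (-1)^i\, n(n-1)^{i-1},
\]
since only normal words contribute and there are $n(n-1)^{i-1}$ of length $i$; the geometric sum evaluates to $(-1)^{k-1}(n-1)^k$. For part (3), I would invoke the dual CL-shellability of $A^*_{n,k}$ established by Bj\"orner and cited in the excerpt. Shellability makes the order complex of the proper part Cohen-Macaulay, hence homotopy equivalent to a wedge of spheres of top dimension $k-1$ (the longest chain in the proper part has $k$ elements), with the number of spheres equal to $|\mu(\hat 0,\hat 1)| = (n-1)^k$ by part (2). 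The main obstacle is locating the correct involution for part (1); once it is in hand, the remainder of the argument proceeds mechanically.
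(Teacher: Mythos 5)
The paper does not prove this theorem; it simply cites Farmer and Bj\"orner, so there is no internal argument to compare against. Your proposal, however, does supply a correct and self-contained proof, so let me assess it on its own terms.

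The reduction of part (1) to the single identity
\[
\sum_{\substack{\beta \le \alpha \\ \beta\ \text{normal}}} (-1)^{|\beta|} = 0, \qquad \alpha \ne \emptyset,
\]
is exactly right: inserting the inductive hypothesis into $\sum_{\beta\le\alpha}\mu(\hat 0,\beta)=0$ gives $\mu(\hat 0,\alpha)=-\sum_{\beta<\alpha,\ \beta\ \text{normal}}(-1)^{|\beta|}$, and the claimed value of $\mu(\hat 0,\alpha)$ in each of the two cases (normal or not) is equivalent to the vanishing of the full alternating sum including $\alpha$ itself. Your involution $\Phi$ (toggle the leading letter $a_1$) is sound: if $\beta$ starts with $a_1$ then, by normality, its second letter is not $a_1$, so deleting the lead letter produces a normal subword not starting with $a_1$; conversely, if $\beta$ does not start with $a_1$ then every embedding of $\beta$ in $\alpha$ misses position $1$, so $\beta\le a_2\cdots a_m$, hence $a_1\beta\le\alpha$, and $a_1\beta$ is normal because $a_1$ differs from the first letter of $\beta$. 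The map is fixed-point-free and flips $(-1)^{|\beta|}$, so the sum vanishes. Parts (2) and (3) then follow as you say: the geometric sum gives $\mu(A^*_{n,k})=(-1)^{k-1}(n-1)^k$, and Bj\"orner's dual CL-shellability (Theorem~\ref{BjCLshellable}, which the paper does cite separately) yields the wedge-of-spheres conclusion with the sphere count matching $|\mu|$. Note that this last step uses the stronger 1980s shellability machinery rather than reconstructing Farmer's original 1979 argument, but it certainly delivers the stated conclusion. Overall the involution gives a pleasantly elementary route to part (1); Bj\"orner's own proof of the general $\mu(\beta,\alpha)$ formula in \cite{Bj1} proceeds instead by counting descending chains in a dual CL-shelling.
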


Bj\"orner generalised Part (1) above to give a simple formula for the M\"obius function of an arbitrary interval $(\beta, \alpha)$, as follows.  

\begin{defn} \cite{Bj1}
Given a word $\alpha= a_1a_2\ldots a_n$ in $A^*$, its repetition set is $R(\alpha)=\{i: a_{i-1}=a_i\}.$   An embedding of $\beta$ in $\alpha$ is a sequence $1\le i_1<i_2<\ldots<i_k\le n$ such that $\beta =a_{i_1} a_{i_2}\ldots a_{i_k}.$ It is called a \textit{normal embedding} if in addition $R(\alpha)\subseteq \{i_1,i_2,\ldots,i_k\}.$

Denote by ${\alpha \choose \beta}$ the number of embeddings of $\beta$ in $\alpha,$ and by ${\alpha \choose \beta}_n$ the number of normal embeddings of $\beta$ in $\alpha$. 

\end{defn}

\begin{thm} \cite[Theorem 1]{Bj1} For all $\alpha, \beta\in A^*,$ 
\[\mu(\beta,\alpha)=(-1)^{|\alpha|-|\beta|} {\alpha \choose \beta}_n.\]
\end{thm}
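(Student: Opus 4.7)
My plan is to derive Björner's Möbius formula as a corollary of the dual CL-shellability of the interval $[\beta,\alpha]$, which he proves in \cite[Theorem 3]{Bj1}. For any dual CL-shellable interval, the general theory yields
\[
\mu(\beta,\alpha) = (-1)^{|\alpha|-|\beta|} \cdot \#\{\text{maximal chains with strictly decreasing edge labels}\}
\]
under the associated chain-edge labeling. So it suffices to exhibit a CL-labeling of $[\beta,\alpha]$ whose decreasing maximal chains are in bijection with the normal embeddings of $\beta$ into $\alpha$.

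The labeling I would use is position-based. Each cover $\gamma \lessdot \gamma'$ inserts a single letter into $\gamma$; relative to the canonical (say, leftmost) embedding of $\gamma'$ into $\alpha$, that letter occupies a well-defined position $p \in \{1,\ldots,|\alpha|\}$, and I take $p$ as the label. A maximal chain of $[\beta,\alpha]$ then corresponds to an ordering of the $|\alpha|-|\beta|$ positions of $\alpha$ inserted along the chain; the complementary retained positions $\{i_1<\cdots<i_k\}$ form an embedding of $\beta$ in $\alpha$, and the top-to-bottom label sequence is the reverse insertion order. The key step would be to show that the strict-decrease condition on labels forces the insertions to proceed right-to-left in $\alpha$, and that this is consistent at every stage precisely when no adjacent pair of equal letters of $\alpha$ is erased en route—equivalently, when $R(\alpha) \subseteq \{i_1,\ldots,i_k\}$. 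Conversely, every normal embedding reconstructs a unique decreasing chain by inserting the complementary positions of $\alpha$ in reverse order.

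The main obstacle I foresee is verifying that this position-based labeling truly satisfies the CL-axioms: the canonical embedding of $\gamma'$ in $\alpha$ shifts as $\gamma'$ grows, so one must reconcile labels along a chain against a common reference frame, and the uniqueness of the increasing chain in each rooted subinterval has to be checked by induction on chain length. Should this bookkeeping prove unwieldy, a fallback is to prove the formula directly by induction on $|\alpha|-|\beta|$ via the Möbius recursion $\mu(\beta,\alpha) = -\sum_{\beta \le \gamma < \alpha}\mu(\beta,\gamma)$, which reduces the claim to the combinatorial identity
\[
\sum_{\substack{T \subseteq [|\alpha|]\\ R(\alpha) \subseteq T,\ \alpha|_T \ge \beta}} (-1)^{|T|} = 0 \qquad (\beta < \alpha),
\]
established by a sign-reversing involution that toggles, relative to the lex-largest $\beta$-embedding $E(T) \subseteq T$, the smallest index of $[|\alpha|] \setminus (R(\alpha) \cup E(T))$ whose toggle preserves the invariant $E(T \triangle \{i\}) = E(T)$ while maintaining $\alpha|_T \ge \beta$.
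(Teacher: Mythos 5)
The paper does not give a proof of this theorem; it is quoted directly from Bj\"orner's paper \cite[Theorem 1]{Bj1}, so there is no proof in the paper for your attempt to be measured against. On its own terms, your primary route has a real gap and your fallback is plausible but underspecified.

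For the CL-shellability route, you correctly identify the obstacle, but I would put it more bluntly: the label you propose is not well-defined as a chain-edge labeling. You read off the position of the inserted letter relative to ``the canonical (leftmost) embedding of $\gamma'$ in $\alpha$,'' but as you go up a maximal chain the leftmost embedding of the current word into $\alpha$ can jump discontinuously (a single insertion can shift many letters of $\gamma'$ to new positions in $\alpha$), so consecutive edge labels along one chain are not being measured in a common coordinate system. There is no obvious way to make them comparable without first choosing an embedding of the full chain into $\alpha$, which is precisely the data you are trying to recover from the labeling. Moreover, for a CL-labeling the label of a cover is allowed to depend on the rooted chain below it; you would need to exploit that freedom to get a coherent reference frame, and the proposal does not do so. Finally, counting decreasing chains only gives $|\mu|$ once you have established that the labeling really is a dual CL-labeling, and that verification (unique rising chain in every rooted interval) is exactly the part you flag as ``unwieldy.'' As written, this route does not close.

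Your fallback via the M\"obius recursion and a sign-reversing involution is the right kind of argument and is in the spirit of Bj\"orner's own treatment, but as stated it is not yet a proof. Two things need to be checked and are not automatic. First, existence: for every non-fixed-point $T$ there must actually exist an index $i\in[|\alpha|]\setminus(R(\alpha)\cup E(T))$ whose toggle preserves both $E$ and the condition $\alpha|_T\geq\beta$; if $\beta<\alpha$ one expects this, but it requires an argument (e.g. show that the smallest element of $[|\alpha|]\setminus(R(\alpha)\cup E(T))$ always works). Second, the involution property: if you toggle the smallest admissible $i$ to pass from $T$ to $T'=T\triangle\{i\}$, you must show the smallest admissible index for $T'$ is again $i$. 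The candidate set $[|\alpha|]\setminus(R(\alpha)\cup E(T'))$ is the same since $E(T')=E(T)$, but admissibility of an index $j<i$ is a statement about $E(T'\triangle\{j\})$ and $\alpha|_{T'\triangle\{j\}}$, which are not the same objects as $E(T\triangle\{j\})$ and $\alpha|_{T\triangle\{j\}}$; you have not shown that inadmissibility of $j$ for $T$ implies inadmissibility of $j$ for $T'$. Until that symmetry is established, the map is not known to be an involution. If you pin down these two points (and also verify that the only $T$ with no admissible toggle forces $\beta=\alpha$), the fallback argument will go through.
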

Observing that the word $\alpha$ is normal if and only if its repetition set $R(\alpha)$ is empty, one sees that this generalises  Farmer's formula for $\mu(\hat 0, \alpha).$

Recall that the zeta function \cite{St3EC1} of a poset is defined by 
$\zeta(\beta, \alpha)=1$ if $\beta\le \alpha,$ and equals zero otherwise. 
\begin{thm}\label{Bjgf}\cite{Bj1} Let $A$ be an alphabet of size $n,$ and $\beta$  a word in $A^*$ of length $k.$  The following generating functions hold:
\begin{enumerate}
\item \cite[Theorem~2~(i)]{Bj1} 
 For the  M\"obius function of subword order: 
\[\sum_{\alpha\in A^*} \mu(\beta,\alpha)t^{|\alpha|} 
= \dfrac{t^k(1-t)}{(1+(n-1)t)^{k+1}}.\]
\item \cite[3.~Remark.]{Bj1}   The number of words of length $p$ in the interval $[\beta, \infty]$ depends only on  
the length $k$ of $\beta, $ and equals 
\[\sum_{i=0}^{p-k} {p\choose i} (n-1)^i.\]

\item \cite[3.~Remark.(i)]{Bj1} For the zeta function of subword order:
\[\sum_{\alpha\in A^*} \zeta(\beta,\alpha)t^{|\alpha|} 
= \dfrac{t^k}{(1-nt)(1-(n-1)t)^{k}}.\]
\end{enumerate}
\end{thm}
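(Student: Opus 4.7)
The plan is to prove all three formulas uniformly by decomposing the underlying word $\alpha$ into blocks separated by the positions of an embedding of $\beta$, and then multiplying generating functions for the blocks. The different notions of embedding (normal for (1) and leftmost for (3)) produce different local constraints on each block, and hence the different generating functions claimed. For (1), I would apply Bj\"orner's formula $\mu(\beta,\alpha) = (-1)^{|\alpha|-|\beta|}\binom{\alpha}{\beta}_n$ (established just before the theorem) to rewrite
\[\sum_{\alpha\in A^*}\mu(\beta,\alpha)\, t^{|\alpha|} = (-1)^k\sum_{(\alpha,e)}(-t)^{|\alpha|},\]
where the inner sum runs over pairs $(\alpha, e)$ consisting of a word $\alpha$ and a normal embedding $e$ at positions $1 \le i_1 < \cdots < i_k \le |\alpha|$ of $\beta$ in $\alpha$. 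Writing $\ell_0 = i_1 - 1$, $\ell_s = i_{s+1} - i_s - 1$ for $1 \le s \le k-1$, and $\ell_k = |\alpha| - i_k$, the normality condition $R(\alpha) \subseteq \{i_1, \ldots, i_k\}$ requires each block of $\alpha$ outside the embedding to have no two consecutive equal letters, and the first letter of each inner or tail block to differ from the preceding $b_s$. Each of the $k$ inner and tail blocks therefore contributes a factor $(1+(n-1)t)^{-1}$ to the signed generating function, while the prefix is asymmetric (position 1 has no predecessor) and contributes $(1-t)/(1+(n-1)t)$. Multiplying these block generating functions together with the factor $(-t)^k$ from the embedded letters and absorbing the global sign $(-1)^k$ yields the formula claimed in (1).

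For (3), the analogous block decomposition is taken with respect to the leftmost embedding of $\beta$ in $\alpha$, which gives a bijection between words $\alpha$ with $\beta \le \alpha$ and tuples consisting of an embedding together with fillings of the intervening blocks. A non-embedding letter lying between $i_{s-1}$ and $i_s$ (with $i_0 := 0$) must differ from $b_s$, since otherwise a leftward shift of the embedding would be possible, while letters after $i_k$ are unconstrained. This produces $k$ blocks each with generating function $(1-(n-1)t)^{-1}$ and one free tail block with generating function $(1-nt)^{-1}$, yielding the formula in (3). For (2) I would extract the coefficient of $t^p$ from (3): the direct expansion gives the equivalent form $\sum_{m=0}^{p-k}\binom{m+k-1}{k-1}(n-1)^m n^{p-k-m}$, and the stated form follows from the partial fraction expansion
\[\frac{t^k}{(1-nt)(1-(n-1)t)^k} = \frac{1}{1-nt} - \sum_{r=0}^{k-1}\frac{t^r}{(1-(n-1)t)^{r+1}},\]
together with the binomial identity $n^p = \sum_{r=0}^p \binom{p}{r}(n-1)^{p-r}$, which exactly cancels the first $k$ terms in the expansion of $1/(1-nt)$ against the sum.

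The main subtlety, and the step requiring most care, is the prefix bookkeeping in (1): position 1 has no predecessor, so the normality condition imposes no constraint on its letter, whereas the first letter of every subsequent block must avoid a specific value of $\beta$. This asymmetry produces the numerator factor $(1-t)$ in the M\"obius generating function and is precisely what distinguishes it from the zeta generating function of (3).
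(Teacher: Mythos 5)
The paper states Theorem~\ref{Bjgf} as a cited result from Bj\"orner \cite{Bj1} and does not give its own proof, so there is no in-paper argument to compare against. Your block-decomposition proof is correct in all three parts and is the standard argument (and essentially Bj\"orner's original one). In part (1), starting from $\mu(\beta,\alpha)=(-1)^{|\alpha|-|\beta|}\binom{\alpha}{\beta}_n$ and summing over pairs $(\alpha,e)$ with $e$ a normal embedding, the prefix block of length $\ell_0$ contributes $1$ or $n(n-1)^{\ell_0-1}$ words, giving the signed generating function $\frac{1-t}{1+(n-1)t}$, while each of the $k$ remaining blocks has first letter forced to avoid the preceding embedded letter and no repeats thereafter, contributing $\frac{1}{1+(n-1)t}$; multiplying by $(-1)^k(-t)^k=t^k$ recovers the claimed formula. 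In part (3), the leftmost-embedding bijection forces each of the $k$ pre-embedding gaps to avoid one specific letter ($(n-1)$ choices per position) and leaves the tail free ($n$ choices per position), giving $\frac{t^k}{(1-nt)(1-(n-1)t)^k}$. Your coefficient-extraction argument for (2), via the partial-fraction identity and $n^p=\sum_{r}\binom{p}{r}(n-1)^{p-r}$, also checks out. The one thing worth tidying is the block indexing, which shifts between parts (1) and (3) (in (1) the block $\ell_s$ sits after $i_s$ and is constrained by $b_s$; in (3) the block before $i_s$ is constrained by $b_s$), but this is purely notational and does not affect correctness.
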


Farmer's result on the homology of $A^*_{n,k}$   was strengthened by Bj\"orner, who  showed the following (see \cite{BjShellableTAMS}, \cite{BjWachs}, and also \cite{Wa} for a survey of lexicographic shellability):
\begin{thm}\label{BjCLshellable}(Bj\"orner \cite[Theorem 3, Corollary 2]{Bj1}) Every interval $(\beta, \alpha)$ in the subword order poset $A^*$ is dual CL-shellable, and hence homotopy Cohen-Macaulay.  In particular, for a finite alphabet $A,$ the poset $A^*_{n,k}$ of nonempty words of length at most $k,$ which may be viewed as the result of rank-selection from an appropriate interval of $A^*$, is also dual CL-shellable and hence also homotopy Cohen-Macaulay. 
\end{thm}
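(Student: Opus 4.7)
The plan is to exhibit an explicit dual CL-labeling of each interval $[\beta,\alpha]$ in the subword order $A^*$ and invoke the Björner--Wachs theory: a bounded poset that is dual CL-shellable is shellable, hence its proper part has the homotopy type of a wedge of top-dimensional spheres and is in particular homotopy Cohen--Macaulay. So the real content is to produce the labeling and verify its two defining properties.

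First, I would define the labeling. Every maximal chain in $[\beta,\alpha]$ has length $m=|\alpha|-|\beta|$, and each covering step consists of deleting a single letter. Given a root in the dual poset, that is a descending chain $\alpha=\gamma_m\gtrdot\cdots\gtrdot\gamma_i$, the sequence of deletions pins down a specific embedding of $\gamma_i$ into $\alpha$; the ambiguity caused by repeated letters (several occurrences of the same letter can be deleted to produce the same word $\gamma_{i-1}$) is resolved by a fixed convention, e.g.\ always recording the leftmost position of $\alpha$ that is eligible at that step. One then labels a cover $\gamma_i\gtrdot\gamma_{i-1}$ (in the context of the given root) by the position in $\{1,2,\ldots,|\alpha|\}$ of the letter thereby deleted, with its natural linear order.

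The heart of the proof is to verify the two CL axioms for every rooted subinterval $[\gamma,\alpha]_r\subseteq[\beta,\alpha]$: (i) there is a unique maximal chain whose labels strictly increase when read top-down, and (ii) this chain is lexicographically first among all maximal chains of the subinterval. I expect to argue that the unique rising chain is the one that deletes the positions of $\alpha$ not in a chosen embedding of $\gamma$ in left-to-right order, constrained so that the remaining word at each step stays $\geq\beta$; normal embeddings of $\beta$ in $\alpha$ will emerge as the parametrising data for these rising chains, tying the construction back to Björner's Möbius formula $\mu(\beta,\alpha)=(-1)^{|\alpha|-|\beta|}\binom{\alpha}{\beta}_n$ cited in Theorem 2. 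Lexicographic minimality is then checked by showing that at the first position where any other maximal chain diverges from the rising one, it must use a strictly larger label, because the smaller label was already consumed by the forced left-to-right deletion.

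The main obstacle is the careful bookkeeping caused by repeated letters: one must show that the leftmost-deletion convention genuinely defines the edge labels as functions of the root, that the count of rising chains matches $\binom{\alpha}{\beta}_n$, and that no alternative root choices produce a second rising chain in any subinterval. Once the CL-labeling is in place for arbitrary intervals, the second statement about $A^*_{n,k}$ is immediate: adjoining $\hat{1}$ to the words of length $\leq k$ realises $A^*_{n,k}$ as a rank-selection from $[\emptyset,\alpha]$ for any word $\alpha$ with $|\alpha|>k$, and rank-selected subposets of CL-shellable posets are CL-shellable, hence homotopy Cohen--Macaulay.
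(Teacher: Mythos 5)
The statement you are proving is cited in the paper from Bj\"orner's work \cite[Theorem 3, Corollary 2]{Bj1}; the paper supplies no argument beyond the observation that $A^*_{n,k}$ arises by rank-selection from an appropriate interval and that rank-selected subposets of CL-shellable posets remain CL-shellable. Your proposal therefore reconstructs Bj\"orner's proof from scratch rather than matching anything in the paper, and as a reconstruction it has genuine gaps. You never actually verify the two CL axioms for your leftmost-deletion labeling: you say you ``expect to argue'' the unique-rising-chain condition and lexicographic minimality, and you flag the bookkeeping over repeated letters as the main obstacle, but that bookkeeping is precisely where the substance of Bj\"orner's theorem lies and cannot be deferred. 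You also conflate rising and falling chains with the M\"obius count: in a CL-labeling of the dual poset each rooted interval has a \emph{unique} increasing chain, whereas it is the \emph{decreasing} maximal chains from $\alpha$ to $\beta$ that are counted by $|\mu(\beta,\alpha)|={\alpha \choose \beta}_n$ and that should biject with normal embeddings of $\beta$ in $\alpha$, not the rising ones.

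The rank-selection step also contains a concrete error: you assert that $A^*_{n,k}$ is a rank-selection of $[\emptyset,\alpha]$ ``for any word $\alpha$ with $|\alpha|>k$.'' This fails; if $\alpha=a^{k+1}$ for a single letter $a$, then $[\emptyset,\alpha]$ consists only of the powers $a^j$, and its rank-selection to $\{0,1,\dots,k,|\alpha|\}$ is a chain, not $A^*_{n,k}$. One must choose $\alpha$ so that every word of length at most $k$ over $A$ embeds in it --- for instance $\alpha=(a_1a_2\cdots a_n)^k$ --- and only for such an ``appropriate'' $\alpha$ (the paper's own wording) does the rank-selection reproduce $A^*_{n,k}$. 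The fact that rank-selection preserves CL-shellability, which you invoke at the end, is correct and is the actual load-bearing step of the second sentence of the theorem, but it should be cited to Bj\"orner--Wachs rather than taken for granted.
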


We point out three  details about Farmer's original paper:
 \begin{description}
 \item[a]    The order used in the present paper is what Farmer calls the \textit{embedding} order (see \cite[p.609]{F}).  Farmer's \lq\lq subword order" differs from ours and \cite{Bj1}.
 \item[b] All homology in Farmer's paper is ordinary homology, as opposed to reduced homology  in the present paper and \cite{St1}. 
 In keeping with his definition of a graded poset, for the rank function $d$ 
 of subword order, Farmer defines $d(w)=j-1$ if $w$ is  a word of length $j;$ in this paper we use the length of the word as its rank.
 \item[c]  In particular, his definition of the $k$-skeleton $X^k$ of a poset of words $X$ corresponds to our $A^*_{k+1},$ i.e. to taking the words of length at most $(k+1)$.
 \end{description}

\section{Rank-selection in $A^*$}

In this section we will assume the alphabet $A$ is finite of size $n.$ 

We follow the standard convention as in \cite{St1}, \cite{St3EC1}: By the homology of a poset $P$ with greatest element $\hat 1$ and least element $\hat 0,$ we mean the reduced homology $\tilde{H}(P)$ of the simplicial complex whose faces are the chains of 
$P\backslash\{\hat 0, \hat 1\}.$
In order to determine the homology of rank-selected subposets of $A^*_{n,k},$ we will use the techniques developed in \cite{Su0}.  For an elementary treatment of these and more general methods, see \cite{SuJer}. 

Whitney homology was originally defined by Baclawski \cite{BaWhitneyHom}.   Bj\"orner showed \cite{Bj0WhitneyHom} that  the $i$th Whitney homology of a graded Cohen-Macaulay 
poset $P$ with  least  element $\hat 0$ is given by the isomorphism
\begin{equation}\label{WHBjdef}W\!H_i(P)\simeq\bigoplus_{x:\mathrm{rank}(x)=i} \tilde{H}_{i-2}(\hat 0, x).\end{equation}
Note that if $P$ has a top element $\hat 1,$ then the top Whitney homology coincides with the top homology of $P.$

If $G$ is a group acting on the Cohen-Macaulay poset $P$, then $W\!H_i(P)$ is also a 
$G$-module.
The present author observed  that the isomorphism~(\ref{WHBjdef})  is in fact group-equivariant, and also established the  equivariant acyclicity of Whitney homology   (see \cite{Su0}).  Thus  
(\ref{WHBjdef}) becomes an effective tool for computing both the $G$-module structure of Whitney homology as well as the homology of the full poset, as an equivariant analogue of the inherently recursive structure in the M\"obius function.  This technique was then  exploited 
in \cite{Su0} and \cite{SuJer} to determine group actions on the homology of  posets.  

It is also computationally useful to consider the \textit{dual} Whitney homology of the Cohen-Macaulay poset $P$ when $P$ has a top element $\hat 1$, that is, the Whitney homology of the dual poset $P^*$, which we denote by $W\!H^*(P).$ Note that we now have an equivariant isomorphism 
\begin{equation}\label{WHdualBjdef}W\!H^*_i(P)\simeq\bigoplus_{x:\mathrm{rank}(x)=r-i} \tilde{H}_{i-2}( x, \hat 1), 0\leq i\leq r.\end{equation}
Here $r$ is the length of the longest chain from $\hat 0$ to $\hat 1.$

See \cite{Su0} and \cite{SuJer} for a more general version of the following  theorem (for arbitrary posets), and also   \cite{Wa} for additional background on Whitney homology. 
\begin{thm}\label{WHmainSS}\cite[Lemma~1.1, Theorem 1.2, Proposition~1.9]{Su0} Let $P$ be a graded Cohen-Macaulay poset of rank $r$  carrying an action of a group $G.$ Then the unique nonvanishing top homology of $P$ coincides with the top Whitney homology module $W\!H_r(P),$ and  as a $G$-module, can be computed as an alternating sum of Whitney homology modules:
\begin{equation}\label{Acyclicity}\tilde{H}_{r-2}(P)\simeq \bigoplus_{i=0}^{r-1} (-1)^i W\!H_{r-1-i}(P).\end{equation}
In particular, if $P(\underline{k})$ denotes the subposet consisting of the first $k$ nonzero ranks, with a bottom and top element attached, then one has the $G$-module decomposition 
\begin{equation}\label{ConsecRanks}
\tilde{H}_{k-2}(P(\underline{k-1}))\oplus \tilde{H}_{k-1} (P(\underline{k}))
\simeq W\!H_k(P), r\geq k\geq 1.\end{equation}
Note that $W\!H_0(P)$ is the trivial $G$-module, while $W\!H_r(P)$ gives the reduced top homology of the poset $P$.
\end{thm}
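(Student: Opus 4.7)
The plan is to derive both assertions from two pillars: the equivariant acyclicity of Whitney homology, and the Cohen-Macaulay hypothesis. The latter forces the homology of every open interval $(\hat 0, x)$ to concentrate in top degree, so that each Whitney module $W\!H_i(P)$ from (\ref{WHBjdef}) is a genuine (non-virtual) $G$-module, and in particular the top Whitney module agrees with the top homology, $W\!H_r(P)\simeq \tilde H_{r-2}(P)$.

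First I would establish the equivariant acyclicity identity
\[\sum_{i=0}^{r}(-1)^i\,W\!H_i(P)=0\]
in the Grothendieck group of $G$-modules. One assembles the summands of the $W\!H_i(P)$ into a $G$-equivariant chain complex whose differentials are induced by covering relations $y\lessdot x$ between consecutive ranks. The underlying topological reason for acyclicity is that the order complex of $P\setminus\{\hat 0\}$ is contractible, having $\hat 1$ as a cone point; a standard filtration-by-top-element argument then refines this contractibility into the virtual module identity above. Solving for $W\!H_r(P)$ and substituting $W\!H_r(P)=\tilde H_{r-2}(P)$ yields (\ref{Acyclicity}).

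For the consecutive-ranks formula (\ref{ConsecRanks}), I would apply the same machinery to the rank-truncated poset $P(\underline k)$, filtering its order complex by the maximum rank reached along a chain. The subcomplex of chains lying entirely in ranks $\le k-1$ coincides with the order complex of $P(\underline{k-1})\setminus\{\hat 0,\hat 1\}$, while the relative contribution from chains passing through some rank-$k$ element computes $W\!H_k(P)$. All three posets are Cohen-Macaulay with homology concentrated in top degree (and the degrees differ by one), so the long exact sequence of the pair collapses to the direct-sum decomposition stated.

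The main obstacle I anticipate is verifying that the boundary maps in the Whitney chain complex can be chosen $G$-equivariantly. The summands of $W\!H_i(P)$ are permuted by $G$ via its action on the rank-$i$ elements of $P$, and the boundary from $\tilde H_{i-2}(\hat 0,x)$ to $\bigoplus_{y\lessdot x}\tilde H_{i-3}(\hat 0,y)$ must be assembled with consistent signs so as to commute with this action. Once this bookkeeping is done, equivariance of the whole argument follows formally; this is the technical heart of the proof.
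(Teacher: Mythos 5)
Your proof is correct and follows the standard rank-filtration / long-exact-sequence approach, which is in substance the argument in the cited reference \cite{Su0}. (The paper itself does not prove the theorem; it only records the citation, so there is no alternative proof in the text to compare against.)

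Two remarks. First, the obstacle you flag at the end -- choosing the boundary maps of the Whitney complex $G$-equivariantly with consistent signs -- is not really an issue once you set things up the right way. Realize the Whitney complex as the $E^1$-page of the spectral sequence of the rank filtration on the augmented chain complex of $\Delta(P\setminus\{\hat 0\})$. Since $G$ acts simplicially on this complex and preserves ranks, the filtration, and hence every page and every differential of the spectral sequence, is automatically a sequence of $G$-modules and $G$-maps; no sign bookkeeping is required. Second, once you have the consecutive-ranks decomposition (\ref{ConsecRanks}) for every $k$, the acyclicity formula (\ref{Acyclicity}) follows by pure telescoping: write $\tilde{H}_{k-1}(P(\underline k)) = W\!H_k(P) - \tilde{H}_{k-2}(P(\underline{k-1}))$ and iterate down to $\tilde{H}_{-1}(P(\underline 0)) = W\!H_0(P)$, then set $k = r-1$ and use $P(\underline{r-1}) = P$ to obtain exactly (\ref{Acyclicity}). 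In particular your separate filtration argument for part one is not needed once part two is in hand. One small point of terminology: what you call a ``filtration-by-top-element argument'' is a filtration by the maximum rank attained along a chain; it is the same thing, but worth naming precisely, since it is exactly this rank filtration that makes the $G$-equivariance automatic.
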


Richard Stanley proved Bj\"orner's conjecture that 
\begin{thm}\label{BjSt} \cite[Theorem 4]{Bj1} The $S_n$-action on the unique nonvanishing homology $\tilde H_{k-1}(A^*_{n,k}, \mathbb C)$ is the module given by the $k$th tensor power of the irreducible representation indexed by the partition $(n-1,1).$ 
\end{thm}

In the following theorem, we formalise Stanley's insight into subword order,  as  used in the proof of the above theorem. 
For more background on the Hopf trace formula and its use in poset homology, see \cite{SuJer}. Recall that the \textit{Lefschetz module} of a poset $P$ is the alternating sum (by degree) of the homology modules of (the order complex) of $P.$

Denote by $S_\lambda$ the irreducible representation of the symmetric group $S_n$ indexed by the partition $\lambda$ of $n$, and write $S_\lambda
^{\otimes i}$ for the $i$th tensor power of the module $S_\lambda$.

\begin{thm}\label{BjStproof}  Let $\{P_n\}$ be any sequence of finite posets each carrying an action of the symmetric group $S_n,$ such that 
\begin{enumerate} 
\item For any $g\in S_n,$ the fixed-point subposet $P_n^g$ is isomorphic to the poset $P_{\mathrm {fix}(g)},$  where $\mathrm {fix}(g)$ is the number of fixed points of $g$ as a permutation of $S_n$, 
and
\item the M\"obius number $\mu(P_n)$ is a polynomial in $(n-1),$  say $\sum_{i\geq 0} b_i (n-1)^i. $ 
\end{enumerate}
Then the  Lefschetz module of $P_n$ decomposes  as a sum of $i$th tensor powers of  the irreducible indexed by the partition $(n-1,1),$ with coefficient equal to $b_i, i\ge 0.$  (Note that the $0$th tensor power corresponds to the trivial $S_n$-module $S_{(n)}.$) In particular, the $S_n$-module structure of the Lefschetz module of $P_n$ is completely determined by its M\"obius number.

 Now assume $P_n$ is Cohen-Macaulay, as well as all the fixed-point subposets $P_n^g.$  If for all $k\geq 0,$  the Betti number of $W\!H_k(P_n)$ is a polynomial in $(n-1),$ then this polynomial determines the trace of $g\in S_n$ on the $k$th Whitney homology of $P_n.$ The representation  of $S_n$ on $W\!H_k(P_n)$ is therefore a linear combination of tensor powers of the irreducible $S_{(n-1,1)}.$ 
\end{thm}

\begin{proof}  This is clear since 
\begin{enumerate} 
\item (\cite{St1}, \cite{St3EC1}, \cite{SuJer}) the Lefschetz module of $P_n$ has (virtual) degree  $\mu(P_n),$  the Euler characteristic of the order complex of $P_n;$  
\item (\cite{St3EC1}, \cite{SuJer}) by the Hopf trace formula, the trace of an element $g\in S_n$ on this Lefschetz module is the  M\"obius number $\mu(P^g_n)$ of the fixed-point poset $P_n^g,$   since it is the Euler characteristic of the order complex of $P_n^g$; 
\item by hypothesis, $\mu(P^g_n)=\mu(P_{\mathrm{ fix}(g)})=\sum_i b_i (\mathrm{ fix}(g)-1)^i,$  and finally 
\item the trace of $g$ on the irreducible $S_n$-module indexed by $(n-1,1)$ is $\mathrm{ fix}(g)-1.$
\end{enumerate}

Similar conclusions hold for  Whitney homology in the case when the posets are Cohen-Macaulay. The key observation here is that from Bj\"orner's formulation eqn.~\eqref{WHBjdef}, it follows that the Whitney homology of the fixed-point subposet $P_n^g$ coincides with the Whitney homology of $P_{\mathrm{fix}(g)}.$
\end{proof}

Our motivating example for the poset $P_n$ satisfying the conditions of Theorem~\ref{BjStproof} is clearly subword order $A^*$ when $|A|=n.$
More generally, fix an integer $k\geq 1,$ and let $S$ be any subset of the ranks $[1,k]$.  Then the rank-selected subposet $A^*_{n,k}(S)$ of $A^*$ consisting of elements with ranks belonging to $S$ also satisfies the conditions of Theorem~\ref{BjStproof}.  When $S=[1,k]$ we denote this rank-selected subposet $A^*_{[1,k]}$ simply by $A^*_{n,k}.$

Using the generating function for the M\"obius function of $A^*$ given in Theorem~\ref{Bjgf}, Theorem~\ref{WHsubword} below computes all but the top Whitney homology $S_n$-modules for subword order.  The proof  requires a key formula, which we derive from the generating function for the M\"obius function of $A^*$ given in Theorem~\ref{Bjgf}.  We  isolate this computation in the following lemma.

\begin{lem} \label{UpperIntSubwordOrder} Let $\beta$ be any element of $A^*_{n,k}\backslash\{\hat 1\},$ where the alphabet $A$ has cardinality $n.$ Then 
\[\mu(\beta, \hat 1)_{A^*_{n,k}}(-1)^{k+1-|\beta|} 
= {k\choose |\beta|} (n-1)^{k-|\beta|}.\]
In particular this M\"obius number depends only on the rank (length) of the word $\beta.$
\end{lem}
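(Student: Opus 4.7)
The plan is to compute $\mu(\beta,\hat 1)_{A^*_{n,k}}$ by the usual defining recurrence, then evaluate the resulting truncated Möbius sum via Björner's generating function in Theorem~\ref{Bjgf}(1) and a standard coefficient-extraction trick.

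First I would observe that, by definition of the Möbius function in the poset $A^*_{n,k}$, one has for $\beta<\hat 1$
\[
\mu_{A^*_{n,k}}(\beta,\hat 1)=-\sum_{\substack{\alpha\in A^*_{n,k}\setminus\{\hat 1\}\\ \beta\le\alpha}}\mu_{A^*_{n,k}}(\beta,\alpha).
\]
For $\alpha\neq\hat 1$ the interval $[\beta,\alpha]$ in $A^*_{n,k}$ is the same as in the full subword order $A^*$, so $\mu_{A^*_{n,k}}(\beta,\alpha)=\mu_{A^*}(\beta,\alpha)$. Setting $m=|\beta|$, the sum then becomes
\[
-\mu_{A^*_{n,k}}(\beta,\hat 1)=\sum_{\substack{\alpha\in A^*\\ \beta\le\alpha,\ |\alpha|\le k}}\mu_{A^*}(\beta,\alpha)=\sum_{j=m}^{k}[t^{j}]\,f(t),
\]
where, by Theorem~\ref{Bjgf}(1),
\[
f(t)=\sum_{\alpha\in A^*}\mu_{A^*}(\beta,\alpha)\,t^{|\alpha|}=\frac{t^{m}(1-t)}{(1+(n-1)t)^{m+1}}.
\]

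Next I would use the familiar identity $\sum_{j\le k}[t^j]f(t)=[t^k]\frac{f(t)}{1-t}$ to collapse the truncated sum into a single coefficient extraction. The factor $(1-t)$ in the numerator of $f(t)$ cancels beautifully with the $\frac{1}{1-t}$, leaving
\[
-\mu_{A^*_{n,k}}(\beta,\hat 1)=[t^{k}]\frac{t^{m}}{(1+(n-1)t)^{m+1}}=[t^{k-m}]\frac{1}{(1+(n-1)t)^{m+1}}.
\]

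The final step is the binomial expansion
\[
\frac{1}{(1+(n-1)t)^{m+1}}=\sum_{j\ge 0}\binom{m+j}{j}(-1)^{j}(n-1)^{j}\,t^{j},
\]
from which, with $j=k-m$,
\[
-\mu_{A^*_{n,k}}(\beta,\hat 1)=\binom{k}{k-m}(-1)^{k-m}(n-1)^{k-m}=\binom{k}{m}(-1)^{k-m}(n-1)^{k-m}.
\]
Multiplying through by $(-1)^{k+1-m}$ yields the claimed formula, and since the right-hand side depends only on $m=|\beta|$ the rank-only dependence follows immediately.

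There is really no deep obstacle here; the only point that requires a little care is the reduction from the auxiliary poset $A^*_{n,k}$ (with its artificial $\hat 1$) back to Björner's generating function for $\mu_{A^*}$, which is why the summation range $m\le j\le k$ must be handled cleanly. Once that reduction is made, the $(1-t)$ factor in the numerator of Björner's formula is precisely what makes the partial-sum coefficient extraction a one-line binomial computation.
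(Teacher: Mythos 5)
Your proof is correct and follows essentially the same route as the paper: expand $\mu(\beta,\hat 1)$ via the defining recurrence, feed the truncated sum into Björner's generating function from Theorem~\ref{Bjgf}(1), collapse the partial sum using the $(1-t)$ factor (your $\sum_{j\le k}[t^j]f(t)=[t^k]\frac{f(t)}{1-t}$ is exactly the paper's observation $\sum_{j=0}^m[t^j](1-t)g(t)=[t^m]g(t)$), and finish with the negative binomial expansion. The only differences are notational.
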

\begin{proof}  For convenience let $|\beta|=i.$ We have, using the defining recurrence for the M\"obius function and the generating function in (1) of Theorem~\ref{Bjgf},
\begin{align*} &\mu(\beta, \hat 1)_{A^*_{n,k}}(-1)^{k+1-|\beta|}
=(-1)^{k+1-i} (-1) \sum_{\stackrel{\alpha\in A^*_{n,k}}{\alpha<\hat 1}}\mu(\beta,\alpha)
=(-1)^{k-i}\sum_{j=i}^k \sum_{\stackrel{\alpha\in A^*_{n,k}}{ |\alpha|=j}} \mu(\beta, \alpha)\\
&=  (-1)^{k-i}\sum_{j=i}^k [t^j] (1-t) t^i (1+t(n-1))^{-(i+1)}
=(-1)^{k-i}\sum_{j=i}^k [t^{j-i}] (1-t) (1+t(n-1))^{-(i+1)}.
\end{align*}
Setting $u=j-i,$ this in turn equals
\begin{align*}
&(-1)^{k-i}\sum_{u=0}^{k-i} [t^{u}] (1-t) (1+t(n-1))^{-(i+1)}
=(-1)^{k-i}[t^{k-i}](1+t(n-1))^{-(i+1)},\\
& (\text{since for any power series }f(t), \text{ one has }\sum_{j=0}^m[t^j](1-t)f(t)=[t^m]f(t)),\\
&=(-1)^{k-i}\binom{-(i+1)}{k-i}(n-1)^{k-i}=\binom{i+1+k-i-1}{k-i} (n-1)^{k-i}.
\end{align*}
The last line follows since ${-m\choose j}=(-1)^j{m+j-1\choose j}$, thereby completing the proof.\end{proof}

\begin{thm}\label{WHsubword} Consider the subword order poset $A^*_{n,k},$ with $|A|=n.$ As $S_n$-modules, the Whitney homology $W\!H(A^*_{n,k})$  and the dual Whitney homology $W\!H^*(A^*_{n,k}),$ 
for $1\le i\le k,$ are as follows. Note that $W\!H_0(A^*_{n,k})=S_{(n)}=W\!H_{k+1}^*(A^*_{n,k})$ (the trivial $S_n$-module).
\begin{equation}\label{Whitney}
W\!H_{i}(A^*_{n,k})=S_{(n-1,1)}^{\otimes i} \oplus S_{(n-1,1)}^{\otimes (i-1)};\end{equation}
\begin{align}\label{dualWhitney} W\!H^*_{k+1-i}(A^*_{n,k})&={k\choose i} S_{(n-1,1)}^{\otimes (k-i)} \otimes (S_{(n-1,1)}\oplus S_{(n)})^{\otimes i}\\
&=  \bigoplus_{j=0}^i{k\choose i}{i\choose j} S_{(n-1,1)}^{\otimes j+(k-i)}.\end{align}
\end{thm}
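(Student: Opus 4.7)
The plan is to apply Proposition~\ref{BjStproof} to both the Whitney and dual Whitney homology of $A^*_{n,k}$ by computing the underlying Betti numbers as polynomials in $(n-1)$. The fixed-point subposet of $A^*_{n,k}$ under any $g\in S_n$ is precisely $A^*_{\mathrm{fix}(g),k}$, and since poset duality commutes with taking fixed points, the family $\{(A^*_{n,k})^*\}_n$ also satisfies the hypotheses of the proposition. Thus its conclusion on Whitney homology applies equally in the dual Whitney setting.

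For the Whitney homology module, I invoke Bj\"orner's isomorphism~\eqref{WHBjdef} and Theorem~\ref{Farmer}(1): the lower interval $(\hat 0, x)$ contributes one-dimensional top homology exactly when $x$ is a normal word, and nothing otherwise. Counting normal words of length $i$ yields
\[\dim W\!H_i(A^*_{n,k}) = n(n-1)^{i-1} = (n-1)^i + (n-1)^{i-1}.\]
Proposition~\ref{BjStproof}, reading off coefficients in this polynomial in $(n-1)$, then forces the $S_n$-module identity $W\!H_i(A^*_{n,k})\simeq S_{(n-1,1)}^{\otimes i}\oplus S_{(n-1,1)}^{\otimes (i-1)}$, establishing~\eqref{Whitney}.

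For the dual Whitney homology, I apply~\eqref{WHdualBjdef}. The upper interval $(\beta,\hat 1)$ with $|\beta|=i$ is Cohen-Macaulay by Theorem~\ref{BjCLshellable}, so its top Betti number equals $|\mu(\beta,\hat 1)|=\binom{k}{i}(n-1)^{k-i}$ by Lemma~\ref{UpperIntSubwordOrder}. Summing over the $n^i$ words of length $i$, and expanding $n^i = ((n-1)+1)^i = \sum_{j=0}^i \binom{i}{j}(n-1)^j$, gives
\[\dim W\!H^*_{k+1-i}(A^*_{n,k}) = \binom{k}{i}\sum_{j=0}^i \binom{i}{j}(n-1)^{j+(k-i)}.\]
Applying Proposition~\ref{BjStproof} to this polynomial in $(n-1)$ term by term yields
\[W\!H^*_{k+1-i}(A^*_{n,k}) = \binom{k}{i}\bigoplus_{j=0}^i \binom{i}{j}\, S_{(n-1,1)}^{\otimes(j+k-i)},\]
which is the second displayed form in~\eqref{dualWhitney}. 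The compact tensor-product form $\binom{k}{i}\,S_{(n-1,1)}^{\otimes(k-i)}\otimes(S_{(n-1,1)}\oplus S_{(n)})^{\otimes i}$ then follows from the binomial identity $(S_{(n-1,1)}\oplus S_{(n)})^{\otimes i}=\bigoplus_j \binom{i}{j}\,S_{(n-1,1)}^{\otimes j}$.

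There is no serious obstacle here: the whole argument is a mechanical application of Proposition~\ref{BjStproof}, once the Betti numbers have been extracted via Farmer's formula and Lemma~\ref{UpperIntSubwordOrder}. The one point requiring a moment's care is confirming the proposition's applicability to dual Whitney homology via the duality observation in the first paragraph; this is what allows the same polynomial-to-representation dictionary to function identically in both halves of the proof.
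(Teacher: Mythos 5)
Your proposal is correct and follows essentially the same route as the paper: apply Proposition~\ref{BjStproof} after computing the Betti numbers of $W\!H_i$ via Farmer's normal-word count and of $W\!H^*_{k+1-i}$ via Lemma~\ref{UpperIntSubwordOrder}, then read off the tensor-power decompositions from the resulting polynomials in $(n-1)$. Your explicit remark that poset duality commutes with taking fixed points is a nice justification the paper leaves implicit, but it does not change the argument.
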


\begin{proof} Equation~\eqref{Whitney} is immediate from 
Theorems~\ref{BjSt} and ~\ref{WHmainSS}. 

For fixed $k$, we will show that the Betti number of the $k$th dual Whitney homology is a polynomial in $(n-1)$ with nonnegative coefficients. By Theorem~\ref{BjStproof},   to compute the action of $S_n$, it is enough to carry out the appropriate M\"obius number (in effect, Betti number) computations.

For the dual Whitney homology, for $0\le i\le k$ we have 
\[W\!H_{k+1-i}^*(A^*_{n,k})=\bigoplus_{x:|x|=i} \tilde{H}(x,\hat 1)_{A^*_{n,k}}.\]
 Computing Betti numbers, and using Lemma~\ref{UpperIntSubwordOrder}, we have that the dimension of the dual Whitney homology module  equals 
\[\sum_{\stackrel{ x \text{ any word }}{|x|=i}} (-1)^{k+1-i}\mu(x,\hat 1)_{A^*_{n,k}}
=\sum_{\stackrel{ x \text{ any word }}{|x|=i}} {k\choose i} (n-1)^{k-i}=n^i{k\choose i} (n-1)^{k-i}\]
This expression translates into the one in the statement of the proposition, since the trace of $g$ on $S_{(n-1,1)}\oplus S_{(n)}$ is the number of fixed points of $g.$
The second expression is obtained from the binomial expansion of
$n^i$ into powers of $(n-1).$
\end{proof}

\begin{cor} The top homology of $A^*_{n,k}$ as an $S_n$-module is also given by 
 the alternating sums
\[\sum_{i=0}^{k}(-1)^{k-i}\left(S_{(n-1,1)}\oplus S_{(n)}^{\otimes i}\otimes{k\choose i} S_{(n-1,1)}^{\otimes (k-i)}  \right)\]
\[=\sum_{i=0}^k (-1)^{k-i} \bigoplus_{j=0}^i{k\choose i}{i\choose j} S_{(n-1,1)}^{\otimes j+(k-i)},\]
and thus both are equal to $S_{(n-1,1)}^{\otimes k}.$
\end{cor}
\begin{proof} 
The two expressions are simply the alternating sums of dual Whitney homology modules in \eqref{dualWhitney}.  They equal the top homology module by Theorem~\ref{WHmainSS}.
\end{proof}

We can now prove the main result of this section, which generalises Theorem~\ref{BjStproof} 
 to  the rank-set $[r,k]$ consisting of the interval of consecutive ranks $r, r+1, \ldots, k.$ To do this, we must rewrite the partial alternating sums of terms appearing in the dual Whitney homology \eqref{dualWhitney}  as a nonnegative linear combination rather than a signed sum. 
 The poset of words in an alphabet of size $n,$ with lengths bounded above by $k$ and below by $r,$ has homology as follows.

\begin{thm}\label{Bordeaux1991mai} 
%
Fix $k\geq 1$ and let $S$ be the interval of consecutive ranks $[r,k]$ for $1\le r\le k.$ Then the rank-selected subposet $A^*_{n,k}(S)$  has unique nonvanishing homology in degree $k-r,$ and the $S_n$-homology representation on $\tilde{H}_{k-r}(A^*_{n,k}(S))$ 
is given by the decomposition
\begin{equation}\label{epicstruggle}\bigoplus_{i=1+k-r}^k  b_i\,S_{(n-1,1)}^{\otimes i},  \text{ where }b_i={k\choose i}{i-1\choose k-r}, i=1+k-r, \ldots, k.\end{equation}
\end{thm}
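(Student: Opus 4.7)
The plan is to apply equivariant Whitney-homology acyclicity (Theorem~\ref{WHmainSS}) to the \emph{dual} poset $P(S)^{\ast}$ of $P(S):=A^*_{n,k}([r,k])$. Since $A^*_{n,k}$ is dual CL-shellable (Theorem~\ref{BjCLshellable}), rank-selection preserves Cohen--Macaulayness, so $P(S)$ has unique nonvanishing homology concentrated in degree $(k-r+2)-2=k-r$. Acyclicity applied to $P(S)^{\ast}$ yields
\[
 \tilde H_{k-r}(P(S)) \;\simeq\; \sum_{j=0}^{k-r+1}(-1)^{k-r+1-j}\,W\!H_j(P(S)^{\ast}),
\]
with the $j=0$ term contributing the trivial module $S_{(n)}$.

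The crucial identification is that for $1\le j\le k-r+1$, the Whitney homology $W\!H_j(P(S)^{\ast})$ coincides with the dual Whitney homology $W\!H^{\ast}_j(A^*_{n,k})$ of the full poset. Indeed, an element at rank $j$ of $P(S)^{\ast}$ corresponds to a word $x$ of length $k+1-j\in[r,k]$, and the upper interval $(x,\hat 1)$ in $A^*_{n,k}$ consists of words of lengths in $(k+1-j,\,k]\subseteq[r,k]$, hence is unaffected by the rank-selection. I can therefore substitute the explicit $S_n$-module formula from Theorem~\ref{WHsubword}:
\[
 W\!H_j(P(S)^{\ast}) \;=\; \binom{k}{k+1-j}\,S_{(n-1,1)}^{\otimes(j-1)}\otimes\bigl(S_{(n-1,1)}\oplus S_{(n)}\bigr)^{\otimes(k+1-j)}.
\]

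Expanding the right-hand tensor factor by the binomial theorem, reindexing via $i=k+1-j$, and applying the standard identity $\binom{k}{i}\binom{i}{k-m}=\binom{k}{m}\binom{m}{k-i}$, the coefficient of $S_{(n-1,1)}^{\otimes m}$ collapses to
\[
 (-1)^{k-r}\binom{k}{m}\sum_{u=0}^{\min(k-r,\,m)}(-1)^{u}\binom{m}{u}.
\]
Invoking the partial-sum identity $\sum_{u=0}^{\ell}(-1)^{u}\binom{m}{u}=(-1)^{\ell}\binom{m-1}{\ell}$ finishes the two cases: for $1\le m\le k-r$ the inner sum vanishes (taking $\ell=m$, the alternating row-sum), while for $m\ge k-r+1$ it evaluates to $(-1)^{k-r}\binom{m-1}{k-r}$, producing the claimed coefficient $b_m=\binom{k}{m}\binom{m-1}{k-r}$. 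The trivial-module contribution $(-1)^{k-r+1}S_{(n)}$ from $j=0$ cancels precisely with the $m=0,\ \ell=0$ term from $j=1$.

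The main obstacle is bookkeeping rather than conceptual: aligning the ranks of $P(S)^{\ast}$ with the word-lengths in $A^*_{n,k}$, checking that each upper interval $(x,\hat 1)$ genuinely survives the rank-selection, and tracking the signs through the reindexing so that the telescoping alternating sum condenses into a nonnegative combination. One could equivalently use Proposition~\ref{BjStproof}, computing the M\"obius number $\mu(A^*_{n,k}([r,k]))$ as a polynomial in $(n-1)$; the coefficient extraction reduces to essentially the same binomial identity.
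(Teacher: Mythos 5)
Your proof is correct, but it takes a genuinely different route from the paper's. Where the paper establishes the formula by induction on $r$, using the two-term consecutive-rank recurrence $\dim\tilde{H}([i,k])+\dim\tilde{H}([i+1,k])=\dim W\!H^*_{k+1-i}(A^*_{n,k})$ (the specialization of equation~\eqref{ConsecRanks} to the dual of $A^*_{n,k}$) and checking the inductive step by manipulating the coefficient $c_j$ directly, you instead apply the full alternating-sum acyclicity~\eqref{Acyclicity} once to $P(S)^*$, identify $W\!H_j(P(S)^*)$ with $W\!H^*_j(A^*_{n,k})$ (a correct and clean observation: rank-selecting to $[r,k]$ does not change any upper interval $(x,\hat 1)$ for $|x|\ge r$), and then extract the coefficient of $S_{(n-1,1)}^{\otimes m}$ using the Vandermonde-type identity $\binom{k}{i}\binom{i}{k-m}=\binom{k}{m}\binom{m}{k-i}$ together with the partial alternating row-sum $\sum_{u=0}^{\ell}(-1)^u\binom{m}{u}=(-1)^\ell\binom{m-1}{\ell}$. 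In effect, you prove the paper's Corollary~\ref{NewBinomialIdentity?} directly and read the theorem off it, reversing the logical order. What you gain is a one-shot, non-inductive argument, and a transparent explanation of why only tensor powers $i\ge 1+k-r$ survive (the inner alternating sum vanishes for $m\le k-r$); the paper's induction avoids the double-sum bookkeeping and the need for the partial-sum identity, at the cost of verifying a base case and an inductive coefficient computation. One point worth stating explicitly in your write-up is that Cohen--Macaulayness of $P(S)$ (inherited from $A^*_{n,k}$ via rank-selection, Theorem~\ref{BjCLshellable}) is what licenses treating $W\!H_\bullet(P(S)^*)$ as genuine modules and invoking Theorem~\ref{WHmainSS}; you gesture at it but it should be a declared hypothesis rather than a side remark.
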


\begin{proof}  For brevity we will simply write $\tilde{H}([i,k])$ for the homology of the subposet $A^*_{n,k}(S)$ when $S=[i,k].$  Shellability implies that 
the rank-selected subposet $A^*_{n,k}(S)$  has unique nonvanishing homology in degree $k-i.$  %

Recall again from Theorem~\ref{BjStproof} that it suffices to work with the Betti numbers, for which \eqref{ConsecRanks} in Theorem~\ref{WHmainSS}, in conjunction with Theorem~\ref{WHsubword}, gives the following recurrence for $1\le i\le k-1:$
\begin{equation}\label{ConsecRanksSubword} \mathrm{dim}\, \tilde{H}([i,k])\oplus \mathrm{dim}\, \tilde{H}([i+1,k])=\mathrm{dim}\, W\!H^*_{k+1-i}(A^*_{n,k})=
n^i{k\choose i} (n-1)^{k-i}. \end{equation}
We will prove the Betti number version of (\ref{epicstruggle}) by induction on $i.$ Note that the result is true for $i=1,$ since in that case the formula  in (\ref{epicstruggle}) gives simply $S_{(n-1,1)}^{\otimes k},$ with Betti number $(n-1)^k$, in agreement with Theorem~\ref{Farmer}.  

When $r=k,$ the formula (\ref{epicstruggle}) reduces to $\sum_{i=1}^k {k\choose i}{i-1\choose 0}(n-1)^i,$ which equals 
$n^k-1.$ This is easily seen to be the correct M\"obius number (up to sign) since we then have a single rank consisting of the $n^k$ words of length $k.$ 
Also observe that when $i=k-1,$ the recurrence (\ref{ConsecRanksSubword}) gives 
\[\mathrm{dim}\, \tilde{H}([k-1,k])=n^{k-1} k (n-1) -n^k+1=(k-1)n^k -kn^{k-1}+1.\]

Let $i=1.$ The recurrence \eqref{ConsecRanksSubword} gives 
\[\mathrm{dim}\, \tilde{H}([2,k])=n{k\choose 1} (n-1)^{k-1}-\mathrm{dim}\, \tilde{H}([1,k])\]
\[=kn (n-1)^{k-1}-(n-1)^{k}=(k-1) (n-1)^k+k(n-1)^{k-1},\]
and this coincides with \eqref{epicstruggle} for $r=2.$ 

Assume that \eqref{epicstruggle} holds for the rank-set $S=[r,k].$ We will show that it must hold for $S=[r+1,k].$ 
By hypothesis we have $\mathrm{dim}\, \tilde{H}([r,k])= \sum_{j=1+(k-r)}^k {k\choose j}{j-1\choose k-r} (n-1)^j,$ and hence the recurrence \eqref{ConsecRanksSubword} gives, for $\mathrm{dim}\, \tilde{H}([r+1,k])$, the expression
\[{k\choose r} (n-1)^{k-r} n^r - \sum_{j=1+(k-r)}^k {k\choose j}{j-1\choose k-r} (n-1)^j.\]
Expanding $n^r$ in powers of $(n-1)$, we obtain
\[{k\choose r} (n-1)^{k-r} \sum_{i=0}^r {r\choose i}(n-1)^i - \sum_{j=1+(k-r)}^k {k\choose j}{j-1\choose k-r} (n-1)^j.\]
The coefficient of $(n-1)^{k-r} $ is clearly ${k\choose r}={k\choose k-r}{r\choose 0},$ in agreement with ~(\ref{epicstruggle}).
For $j=1+(k-r), \ldots r+(k-r),$ the term $(n-1)^j$ appears with coefficient $c_j$ where 
\begin{align*}&c_j= {k\choose r} {r\choose j-k+r} -{k\choose j}{j-1\choose k-r}\\
&={k\choose j}\left( \frac{j!(k-j)!}{r!(k-r)!} \frac{r!}{(j-k+r)!(k-j)!} - {j-1\choose k-r}\right)\\
&={k\choose j}\left({j\choose k-r}-{j-1\choose k-r}\right)={k\choose j}{j-1\choose k-r-1},
\end{align*}
which is precisely as predicted by (\ref{epicstruggle}) for $S=[r+1,k].$
This finishes the inductive step, and hence the proof.
\end{proof}

This proof establishes the following combinatorial identity, which will be instrumental in the proof of Theorem~\ref{AlmosthpositiveInstances} later in the paper. 
\begin{cor}\label{NewBinomialIdentity?}\begin{multline*} \sum_{i=0}^{k+1-r} (-1)^i \mathrm{dim}\, W\!H^*_{k+1-(r+i)}(A^*_{n,k})\\
=\sum_{i=0}^{k-r} (-1)^i {k\choose r+i}n^{r+i}(n-1)^{k-(r+i)}+(-1)^{k+1-r}
=\sum_{i=1+k-r}^k {k\choose i}{i-1\choose k-r} (n-1)^i.
\end{multline*}
\end{cor}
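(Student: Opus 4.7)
The plan is to derive the identity by iterating the Whitney homology recurrence \eqref{ConsecRanksSubword} that drives the inductive proof of Theorem~\ref{Bordeaux1991mai}, rather than attempting a direct binomial identity argument.

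For the first equality, I substitute the explicit dimension formula $\dim W\!H^*_{k+1-j}(A^*_{n,k}) = n^j \binom{k}{j}(n-1)^{k-j}$ from the proof of Theorem~\ref{WHsubword}, valid for $1 \le j \le k$. Reindexing by $j = r+i$, the terms for $0 \le i \le k-r$ assemble precisely into the alternating sum on the middle line. The supplementary term at $i = k+1-r$ corresponds to $W\!H^*_0(A^*_{n,k}) = S_{(n)}$, which is the trivial $S_n$-module of dimension $1$, and this contributes the isolated $(-1)^{k+1-r}$.

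For the second equality, I rewrite \eqref{ConsecRanksSubword} as $\dim \tilde{H}([j,k]) = \dim W\!H^*_{k+1-j}(A^*_{n,k}) - \dim \tilde{H}([j+1,k])$ and telescope, starting at $j = r$ and iterating $k-r$ steps down to the base case $\dim \tilde{H}([k,k]) = n^k - 1$ (the rank-selected subposet at the single rank $k$ is an antichain on $n^k$ words). Since $\dim W\!H^*_1(A^*_{n,k}) = n^k$, the base reads $\dim \tilde{H}([k,k]) = \dim W\!H^*_1(A^*_{n,k}) - 1$, and the residual $-1$ combines with the prefactor $(-1)^{k-r}$ to furnish precisely the trailing $(-1)^{k+1-r}$. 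Thus the LHS of the Corollary equals $\dim \tilde{H}([r,k])$, which in turn equals the third expression by Theorem~\ref{Bordeaux1991mai}.

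The only real obstacle is sign bookkeeping through the telescoping: one must verify that the $-1$ left over from the base case $\dim \tilde{H}([k,k]) = \dim W\!H^*_1(A^*_{n,k}) - 1$ aligns in sign and magnitude with the extra $W\!H^*_0$ term needed to extend the alternating sum to the full index range $0 \le i \le k+1-r$. No new idea beyond the tools already deployed in the proofs of Theorems~\ref{WHsubword} and \ref{Bordeaux1991mai} is required.
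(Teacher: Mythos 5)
Your proof is correct and is essentially the argument the paper intends when it says the identity follows from the preceding proof: you telescope the recurrence $\dim\tilde{H}([j,k]) = \dim W\!H^*_{k+1-j} - \dim\tilde{H}([j+1,k])$ from $j=r$ down to the base $\dim\tilde{H}([k,k]) = n^k-1$, absorb the residual $\pm 1$ as the $W\!H^*_0$ contribution, substitute the explicit dimension formula for $W\!H^*$, and identify the result with Theorem~\ref{Bordeaux1991mai}. The sign bookkeeping you flag works out exactly as you describe, so nothing is missing.
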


\section{Deleting one rank from $A^*_{n,k}$: a curious isomorphism of homology}

In this section we will determine the homology representation of the rank-selected subposet $A^*_{n,k}(S)$ of $A^*_{n,k}$ when $S$ is obtained by deleting one rank from the interval $[1,k].$   In this special case   the computation will reveal a curious duality  in homology. 

Again we use a method developed in \cite{Su0} which is particularly useful for Lefschetz homology computations when the deleted set is an antichain.  The  version  below is the special case when one rank is deleted.
\begin{thm}\label{antichain} (\cite[Theorem 1.10]{Su0}, \cite{SuJer}) Let $P$ be a Cohen-Macaulay poset of rank $r$, $G$ a group of automorphisms of $P$ and let $Q$ be a subposet obtained by deleting a 
 rank-set $T$ consisting of one rank in $P$.  Thus $Q$ is also $G$-invariant, and  $Q$ is graded and has homology concentrated in the highest degree $rank(Q)-2$. Then one has the $G$-equivariant decomposition
\begin{multline}(-1)^{r-rank(Q)}\tilde{H}(Q)-\tilde{H}_{r-2}(P)
=\bigoplus_{x\in T/G}(-1)\cdot(\tilde{H}(\hat 0, x)_P\otimes \tilde{H}(x,\hat 1)_P)\uparrow_{stab(x)}^G,\\
 \text{ where the sum runs over one element } x\in T \text{ in each orbit of }G.
\end{multline}
Here $stab(x)$ denotes the stabiliser subgroup of $G$ which fixes the element $x.$
\end{thm}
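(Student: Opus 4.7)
The plan is to work at the level of virtual $G$-modules, invoking the equivariant Hopf trace formula, and then to use the Cohen--Macaulay hypotheses to collapse the Lefschetz modules to a single top homology term on each side.

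First I would pass to the reduced chain complex of the order complex of $P\setminus\{\hat 0,\hat 1\}$. Every chain is either disjoint from $T$ (and so contributes to the chain complex of $Q\setminus\{\hat 0,\hat 1\}$) or meets $T$; since $T$ is an antichain, in the latter case the chain meets $T$ in exactly one element $x$. Such a chain decomposes uniquely as $C_1\sqcup\{x\}\sqcup C_2$ with $C_1$ a chain in the open interval $(\hat 0,x)_P$ and $C_2$ a chain in $(x,\hat 1)_P$. Tracking dimensions, an $i$-dim chain through $x$ with $|C_1|=j+1$ and $|C_2|=k+1$ has $i=j+k+2$, so as $\mathrm{stab}(x)$-modules
\[
\tilde C_{j+k+2}(\text{chains through }x)\;\cong\;\tilde C_j(\hat 0,x)_P\otimes\tilde C_k(x,\hat 1)_P.
\]
Summing $x$ over one representative per $G$-orbit and inducing up gives an equality of virtual $G$-modules
\[
\tilde C_\ast(P\setminus\{\hat 0,\hat 1\})\;-\;\tilde C_\ast(Q\setminus\{\hat 0,\hat 1\})\;=\;\bigoplus_{x\in T/G}\bigl(\tilde C_\ast(\hat 0,x)\otimes\tilde C_\ast(x,\hat 1)\bigr)[2]\uparrow^G_{\mathrm{stab}(x)},
\]
where the shift by $2$ records the middle element $\{x\}$.

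Next I would apply the equivariant Hopf trace formula, which replaces each alternating sum of chain modules by the corresponding alternating sum of reduced homology modules (the Lefschetz module $L(\cdot)$). Taking alternating sums degree by degree, the shift contributes a global sign $(-1)^2=1$, yielding
\[
L(P)-L(Q)\;=\;\bigoplus_{x\in T/G}\bigl(L(\hat 0,x)\otimes L(x,\hat 1)\bigr)\uparrow^G_{\mathrm{stab}(x)}.
\]

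Finally I would use Cohen--Macaulayness. Since $P$ is CM of rank $r$, $L(P)=(-1)^{r-2}\tilde H_{r-2}(P)$; by hypothesis $L(Q)=(-1)^{r'-2}\tilde H(Q)$ with $r'=\mathrm{rank}(Q)$; and for each $x$ of rank $s$ the two open intervals are CM of ranks $s-1$ and $r-s-1$, so
\[
L(\hat 0,x)\otimes L(x,\hat 1)\;=\;(-1)^{s-2}(-1)^{r-s-2}\,\tilde H(\hat 0,x)\otimes\tilde H(x,\hat 1)\;=\;(-1)^{r}\,\tilde H(\hat 0,x)\otimes\tilde H(x,\hat 1).
\]
Substituting, multiplying through by $(-1)^{r}$, and rearranging produces exactly
\[
(-1)^{r-r'}\tilde H(Q)-\tilde H_{r-2}(P)\;=\;\bigoplus_{x\in T/G}(-1)\cdot\bigl(\tilde H(\hat 0,x)\otimes\tilde H(x,\hat 1)\bigr)\uparrow^G_{\mathrm{stab}(x)}.
\]

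The only delicate step is the bookkeeping of signs and dimension shifts in passing from the chain-complex identity to the statement in terms of top homology; the antichain hypothesis on $T$ is what makes the decomposition of chains clean (each chain meets $T$ in at most one element), and the CM hypotheses on $P$ and $Q$ are what allow the Lefschetz modules to be identified with single top-degree modules.
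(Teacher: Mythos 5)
Your argument is correct, and it is essentially the approach taken in the sources the paper cites for this theorem (the paper itself does not reprove Theorem~\ref{antichain}; it imports it from \cite{Su0} and \cite{SuJer}, whose very title signals the Hopf trace formula as the engine). The chain decomposition --- each chain of $P\setminus\{\hat 0,\hat 1\}$ either avoids the antichain $T$ or meets it in exactly one element $x$, splitting as $C_1\sqcup\{x\}\sqcup C_2$ --- is the standard one, the $(-1)^{j+k+2}$ bookkeeping and the factorisation of the Lefschetz sum into $L(\hat 0,x)\otimes L(x,\hat 1)$ are correct, and the final sign computation $(-1)^s(-1)^{r-s}=(-1)^r$ together with multiplication by $(-1)^r$ reproduces the stated identity. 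Two small points worth noting explicitly if you were writing this up fully: the decomposition of chain modules and the orbit/induction step are $G$-equivariant precisely because $T$ is a $G$-invariant antichain, so you should say a word about why the orbit sum with induction correctly encodes the $G$-action on $\bigoplus_{x\in T}$; and the degenerate boundary cases (e.g.\ $x$ of rank $1$, where $(\hat 0,x)$ is empty and $\tilde H_{-1}=\mathbb C$) are consistent with the sign convention you use, which is worth a sentence so the reader trusts the $(-1)^s$ factor uniformly.
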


We apply this theorem to the poset $A^*_{n,k}$ and the rank-set $S=[1,k]\backslash\{r\},$ removing all words of length $r,$ for a fixed $r$ in $[1,k].$

\begin{thm}\label{rank-deletion} As an $S_n$-module, we have 
\[\tilde{H}_{k-2}(A^*_{n,k}(S))\simeq \left[{k\choose r}-1\right]S_{(n-1,1)}^{\otimes k}
\oplus {k\choose r} S_{(n-1,1)}^{\otimes k-1}.\]
\end{thm}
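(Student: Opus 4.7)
The plan is to apply the antichain-deletion formula Theorem~\ref{antichain} to $P = A^*_{n,k}$ with deleted antichain $T$ equal to all words of length $r$, and then invoke Proposition~\ref{BjStproof} to lift the resulting Betti-number identity to the $S_n$-equivariant decomposition. Both $P$ and $Q = A^*_{n,k}(S)$ are Cohen-Macaulay by Theorem~\ref{BjCLshellable}, with unique nonvanishing homology in their top degrees $k-1$ and $k-2$ respectively, so the hypotheses of Theorem~\ref{antichain} are satisfied. Since exactly one rank is removed, $\mathrm{rank}(P) - \mathrm{rank}(Q) = 1$, and the formula rearranges to
\[
\tilde H_{k-2}(Q) \;=\; \bigoplus_{x \in T/S_n} \bigl(\tilde H(\hat 0, x)_P \otimes \tilde H(x, \hat 1)_P\bigr)\!\uparrow_{\mathrm{stab}(x)}^{S_n} \;-\; \tilde H_{k-1}(P).
\]

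Next I would invoke Proposition~\ref{BjStproof}, which reduces the problem to writing the dimension of the right-hand side as a polynomial in $(n-1)$; the coefficients of that polynomial then give the multiplicities of the tensor powers $S_{(n-1,1)}^{\otimes i}$ directly. By Theorem~\ref{Farmer}, the lower interval $(\hat 0, x)$ has Betti number $1$ when $x$ is normal and $0$ otherwise, and the normal words of length $r$ number $n(n-1)^{r-1}$. By Lemma~\ref{UpperIntSubwordOrder}, the Betti number of the upper interval $(x,\hat 1)$ depends only on $|x|$ and equals $\binom{k}{r}(n-1)^{k-r}$. Multiplying these factors, summing over $T$, and subtracting $\dim \tilde H_{k-1}(P) = (n-1)^k$ yields
\[
\dim \tilde H_{k-2}(Q) \;=\; n\binom{k}{r}(n-1)^{k-1} - (n-1)^k.
\]

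A one-line simplification (write $n = (n-1)+1$ and factor $(n-1)^{k-1}$) rewrites this as
\[
\dim \tilde H_{k-2}(Q) \;=\; \Bigl(\binom{k}{r}-1\Bigr)(n-1)^k + \binom{k}{r}(n-1)^{k-1},
\]
at which point Proposition~\ref{BjStproof} delivers the claimed $S_n$-decomposition.

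I expect the main hazard to be bookkeeping in Theorem~\ref{antichain}: tracking the sign $(-1)^{\mathrm{rank}(P)-\mathrm{rank}(Q)}$ and the parity between the Lefschetz module (what Proposition~\ref{BjStproof} directly identifies) and the homology concentrated in top degree, to ensure that no sign is accidentally flipped. The observation that makes everything collapse cleanly, avoiding any orbit-by-orbit analysis of words of length $r$ under $S_n$, is precisely Lemma~\ref{UpperIntSubwordOrder}: since $\dim\tilde H(x,\hat 1)$ depends only on $|x|$, the induced-module sum on the right-hand side of the antichain formula becomes, at the level of Betti numbers, a single product.
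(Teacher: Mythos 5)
Your proposal is correct and follows essentially the same route as the paper's proof: apply the antichain-deletion formula (Theorem~\ref{antichain}) to remove rank $r$, compute the lower interval via Farmer's formula and the upper interval via Lemma~\ref{UpperIntSubwordOrder}, subtract $\tilde{H}_{k-1}(A^*_{n,k})$, and lift the resulting polynomial in $(n-1)$ to an $S_n$-decomposition through Proposition~\ref{BjStproof}. The only cosmetic difference is that you work directly with Betti numbers while the paper tracks signed M\"obius numbers; the final simplification $n = (n-1)+1$ is identical.
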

\begin{proof} 
We invoke Theorem~\ref{BjStproof} by fixing a rank-set $S$ and considering the family of posets $P_n=A^*_{n,S}=A^*_{n,k}(S)$, where $n=|A|$. Once again we need only compute M\"obius numbers in Theorem~\ref{antichain}. Writing simply $\mu(P)$ for the M\"obius number of the poset $P$, the Betti number identity given by the theorem    is 
\[-(-1)^{k-2}\mu(A^*_{n,k}(S))-(-1)^{k-1} \mu(A^*_{n,k})=(-1)\cdot
\sum_{x:|x|=r} (-1)^{r} \mu(\hat 0,x)_{A^*_{n,k}}\cdot (-1)^{k+1-r} \mu(x, \hat 1)_{A^*_{n,k}},\]
or equivalently, clearing signs, 
\[\mu(A^*_{n,k}(S))- \mu(A^*_{n,k})=(-1)\cdot
\sum_{x:|x|=r}  \mu(\hat 0,x)_{A^*_{n,k}}\cdot  \mu(x, \hat 1)_{A^*_{n,k}}.\]
The  summand corresponding to a word $x$ of length $r$ in the right-hand side of this equation is nonzero only if $x$ is a normal word,  by Theorem~\ref{Farmer}.
We therefore obtain,  using Lemma~\ref{UpperIntSubwordOrder},
\begin{multline*}\mu(A^*_{n,k}(S))- \mu(A^*_{n,k})=(-1)\cdot(-1)^r n(n-1)^{r-1}\mu(x_0,\hat 1)_{A^*_{n,k}}\\=(-1)\cdot(-1)^r n(n-1)^{r-1}(-1)^{k-r+1} (n-1)^{k-r} {k\choose r}
=(-1)^k n (n-1)^{k-1}{k\choose r},
\end{multline*}
for any fixed normal word $x_0$ of length $r$.

Hence 
\begin{align*}(-1)^k\mu(A^*_{n,k}(S))&=(-1)^k\mu(A^*_{n,k})+ n (n-1)^{k-1}{k\choose r}
=-(n-1)^k+{k\choose r}n (n-1)^{k-1}\\
&=\left[{k\choose r}-1\right] (n-1)^k + \binom{k}{r} (n-1)^{k-1}
\end{align*}
Since $A^*_{n,k}(S)$ has rank $k,$ this is precisely the Betti number version of the statement of the theorem, thereby completing the proof.
\end{proof}

An immediate and intriguing corollary is the following.
\begin{prop}\label{rank-deletion-duality}  Let $|A|=n.$ Fix a rank $r\in [1,k-1].$ Then the homology modules of the subposets $A^*_{n,k}({[1,k]\backslash\{r\}})$ and 
$A^*_{n,k}({[1,k]\backslash\{k-r\}})$ are $S_n$-isomorphic.
\end{prop}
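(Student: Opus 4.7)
The plan is that Proposition~\ref{rank-deletion-duality} is essentially immediate from Theorem~\ref{rank-deletion}. That theorem provides the explicit $S_n$-module decomposition
\[
\tilde{H}_{k-2}\bigl(A^*_{n,k}([1,k]\setminus\{r\})\bigr)\simeq
\left[\binom{k}{r}-1\right]S_{(n-1,1)}^{\otimes k}\oplus \binom{k}{r}\, S_{(n-1,1)}^{\otimes k-1},
\]
and the right-hand side depends on $r$ only through the single binomial coefficient $\binom{k}{r}$. The symmetry $\binom{k}{r}=\binom{k}{k-r}$ then collapses the formulas for the two rank sets into the same module.

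The first thing I would do is check that Theorem~\ref{rank-deletion} applies to both rank sets: the hypothesis is $r\in[1,k]$, and since our assumption is $r\in[1,k-1]$, we automatically have $k-r\in[1,k-1]\subseteq[1,k]$, so both $[1,k]\setminus\{r\}$ and $[1,k]\setminus\{k-r\}$ are admissible inputs. Next, I would simply invoke Theorem~\ref{rank-deletion} twice, once for the rank set $[1,k]\setminus\{r\}$ and once for $[1,k]\setminus\{k-r\}$, and observe that the two resulting $S_n$-module expressions are identical after applying the symmetry of the binomial coefficient. There is no recursion, induction, or further M\"obius computation to perform beyond what Theorem~\ref{rank-deletion} already supplies.

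The main ``obstacle'' here is not the proof itself but the interpretation: there is no obvious poset-theoretic symmetry between $A^*_{n,k}([1,k]\setminus\{r\})$ and $A^*_{n,k}([1,k]\setminus\{k-r\})$. These two rank-selected subposets are not isomorphic as graded posets in general — rank $i$ of $A^*$ contains $n^i$ words, a quantity which is very far from being symmetric in $i\mapsto k-i$ — so the two subposets have genuinely different combinatorics. What Theorem~\ref{rank-deletion} reveals is that this asymmetry disappears entirely at the level of the top $S_n$-homology. As the introduction highlights, this suggests that the associated simplicial complexes might even be $S_n$-equivariantly homotopy equivalent, a topological question that goes well beyond the scope of the proposition itself, but which I would raise in a remark immediately following the proof.
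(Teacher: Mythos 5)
Your proposal is correct and is exactly the paper's (implicit) argument: the paper states Proposition~\ref{rank-deletion-duality} as ``an immediate and intriguing corollary'' of Theorem~\ref{rank-deletion}, precisely because the formula there depends on $r$ only through $\binom{k}{r}=\binom{k}{k-r}$. Your surrounding remarks about the lack of an obvious poset-level symmetry likewise mirror the question the paper raises immediately afterward.
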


It would be interesting to explain this isomorphism topologically. More precisely:
\begin{qn} Is there a combinatorial map giving an $S_n$-homotopy equivalence between the simplicial complexes associated to  $A^*_{n,k}({[1,k]\backslash\{r\}})$ and 
$A^*_{n,k}({[1,k]\backslash\{k-r\}})$?  
\end{qn}
  
\section{The action on chains, and arbitrary rank-selected homology} 

 Assume $|A|=n.$ 
  For a subset $S\subseteq [1,k],$ denote by $\alpha_n(S)$ the permutation module of $S_n$ afforded by the maximal chains of the rank-selected subposet $A^*_{n,k}(S).$   In this section we derive a recurrence for the action, and hence an explicit formula.  We begin with an analogue of Theorem~\ref{BjStproof} for the chains.
  
  \begin{prop}\label{BjStproofChains}
  Let $\{P_n\}$ be any sequence of finite posets each carrying an action of the symmetric group $S_n,$ such that for any $g\in S_n,$ the fixed-point subposet $P_n^g$ is isomorphic to the poset $P_{\mathrm {fix}(g)},$  where $\mathrm {fix}(g)$ is the number of fixed points of $g$ as a permutation of $S_n$, %
   Suppose that the  number of maximal chains of $P_n$ is a polynomial in $(n-1),$  say $\sum_{i\geq 0} a_i (n-1)^i. $ 
Then the permutation action of $S_n$ on the maximal chains of $P_n$  decomposes  as a sum of $i$th tensor powers of  the irreducible indexed by the partition $(n-1,1), i\geq 0,$ with coefficient equal to $a_i.$  In particular, the $S_n$-module structure of the maximal chains of $P_n$ is completely determined by its dimension.
  \end{prop}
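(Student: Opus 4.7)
The plan is to mimic the proof of Proposition~\ref{BjStproof}, replacing the Lefschetz module by the chain permutation module and replacing the Hopf trace formula by the elementary observation that the trace of a group element on a permutation module equals the number of its fixed points on the underlying set.

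First I would note that since $g\in S_n$ acts on $P_n$ by poset automorphisms, a chain $\hat{0}<x_1<\cdots<x_j<\hat 1$ is fixed (setwise) by $g$ if and only if each $x_i$ is individually fixed, i.e.\ the chain lies entirely in the fixed-point subposet $P_n^g$. Consequently the trace of $g$ on the permutation module spanned by the chains of $P_n$ equals the number of chains of $P_n^g$. Invoking the hypothesis $P_n^g\cong P_{\mathrm{fix}(g)}$ identifies this with the number of chains of $P_{\mathrm{fix}(g)}$, which by assumption equals $\sum_{i\geq 0} a_i\,(\mathrm{fix}(g)-1)^i$.

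On the representation-theoretic side, the trace of $g$ on the reflection representation $S_{(n-1,1)}$ is $\mathrm{fix}(g)-1$, so by multiplicativity of traces on tensor products the trace of $g$ on $S_{(n-1,1)}^{\otimes i}$ is $(\mathrm{fix}(g)-1)^i$. Hence the character of $\bigoplus_{i\geq 0} a_i\, S_{(n-1,1)}^{\otimes i}$ at $g$ is also $\sum_{i\geq 0} a_i\,(\mathrm{fix}(g)-1)^i$. The two characters therefore agree on every $g\in S_n$, and since characters determine representations in characteristic zero, the claimed $S_n$-module decomposition follows.

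There is no real obstacle here; the only subtlety — and it is a light one — is recognising why a single polynomial in $(n-1)$, a priori an identity merely of dimensions, encodes the entire character. This is ensured precisely by the fixed-point hypothesis, which converts chain counting at $g$ in $P_n$ into chain counting at the identity in the smaller poset $P_{\mathrm{fix}(g)}$, and so evaluates the polynomial at $\mathrm{fix}(g)-1$ in exactly the variable in which the tensor powers of $S_{(n-1,1)}$ are naturally expressed.
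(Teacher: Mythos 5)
Your proof is correct and takes essentially the same route as the paper's: trace of $g$ on the chain permutation module equals the number of chains fixed by $g$, which are exactly the chains in $P_n^g\cong P_{\mathrm{fix}(g)}$, and then one evaluates the polynomial at $\mathrm{fix}(g)-1$ and matches it against the character of $\bigoplus_i a_i\,S_{(n-1,1)}^{\otimes i}$. Your extra observation that a setwise-fixed chain must be pointwise fixed (since an order automorphism of a finite chain is the identity) is a detail the paper leaves implicit but does not change the argument.
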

  \begin{proof}   Since $S_n$ acts by permuting the chains, the trace of $g\in S_n$  on the chains of $P_n$ is equal to the number of chains fixed by $g.$ As in the proof of Theorem~\ref{BjStproof}, the key point is that  this in turn is the number of chains in the fixed-point poset $P_n^g,$ and the latter coincides with $P_{\mathrm {fix}(g)}.$ 
  \end{proof}
  
Note that, as was the case with Theorem~\ref{BjStproof},   Proposition~\ref{BjStproofChains} applies to all rank-selected subposets of $A^*_{n,k}.$  Before we apply this, we state the following reformulation of 
 an observation of Bj\"orner recorded in Theorem~\ref{Bjgf}.    Let $S=\{1\le s_1<s_2<\ldots<s_p\le k\}$ be a subset of  $[1,k].$  By Part (2) of 
  Theorem~\ref{Bjgf}, the number of words in $[\beta, \infty]$ depends only on $|\beta|$.  This immediately gives the following recurrence for the dimensions of the modules $\alpha_n(S):$ 
  \begin{equation}\label{ChainRecDim}\dim \alpha_n(S)=\dim \alpha_n(S\backslash\{s_p\})\sum_{i=0}^{s_p-s_{p-1}}\! {s_p\choose i} (n-1)^i, \! \  \dim \alpha_n(\{s_1\})=n^{s_1}\!=\sum_{i=0}^{s_1} {s_1\choose i} (n-1)^i.  \end{equation}

  \begin{thm}\label{RankSelectedChains} For any subset $S\subseteq [1,k],$ the $S_n$-module induced by the action of $S_n$ on the maximal chains of the rank-selected subposet $A^*_{n,k}(S)$ is a \textbf{nonnegative} integer combination of tensor powers of the irreducible indexed by $(n-1,1).$ Hence the $S_n$-representation on the homology of the rank-selected subposet $A^*_{n,k}(T), T\ne \emptyset,$ is an integer combination of positive tensor powers of the irreducible indexed by $(n-1,1).$ The highest tensor power that can occur is the $m$th, where $m=\max(T).$ 
  \end{thm}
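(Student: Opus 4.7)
\medskip

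\noindent\textbf{Proof plan for Theorem~\ref{RankSelectedChains}.}
The plan is to apply Proposition~\ref{BjStproofChains} to the family $P_n = A^*_{n,k}(S)$ after getting an explicit, clearly nonnegative polynomial expression in $(n-1)$ for the number of maximal chains.  First I would fix $S = \{s_1 < s_2 < \cdots < s_p\} \subseteq [1,k]$ and count maximal chains of $A^*_{n,k}(S)$ directly: such a chain is a sequence of words $w_1 \le w_2 \le \cdots \le w_p$ with $|w_j| = s_j$, so using part (2) of Theorem~\ref{Bjgf} (the number of words of length $p$ above $\beta$ depends only on $|\beta|$) the number of chains is
\[
C_n(S) \;=\; n^{s_1} \prod_{j=1}^{p-1} \sum_{i=0}^{s_{j+1}-s_j} \binom{s_{j+1}}{i} (n-1)^i .
\]
Writing $n^{s_1} = \sum_{j=0}^{s_1} \binom{s_1}{j}(n-1)^j$, the right-hand side is manifestly a polynomial in $(n-1)$ with nonnegative integer coefficients, and the leading term has degree $s_1 + \sum_{j=1}^{p-1}(s_{j+1}-s_j) = s_p = \max(S)$.

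Next I would verify the hypotheses of Proposition~\ref{BjStproofChains}: for $g \in S_n$ the fixed-point subposet of $A^*_{n,k}(S)$ is exactly the subposet of words using only letters fixed by $g$, that is $A^*_{\mathrm{fix}(g),k}(S)$, and the number of chains in this subposet is $C_{\mathrm{fix}(g)}(S)$. The proposition then gives that the permutation character of $S_n$ on maximal chains equals $\sum_{i\ge 0} a_i(S)\,\chi_{S_{(n-1,1)}^{\otimes i}}$, where $a_i(S)\ge 0$ is the coefficient of $(n-1)^i$ in the polynomial above, and the highest $i$ with $a_i(S)\ne 0$ is $\max(S)$. This proves the first assertion.

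For the homology statement I would invoke the standard Stanley rank-selection identity for the Cohen--Macaulay poset $A^*_{n,k}$ (Theorem~\ref{BjCLshellable}) at the equivariant level: by the Hopf trace formula and concentration of homology in top degree, one has, for nonempty $T\subseteq[1,k]$,
\[
\tilde H(A^*_{n,k}(T)) \;=\; \sum_{R \subseteq T}(-1)^{|T|-|R|}\,\alpha_n(R),
\]
where $\alpha_n(R)$ is the chain permutation module (with $\alpha_n(\emptyset)$ the trivial module). Since each $\alpha_n(R)$ is by the first part a nonnegative integer combination of $S_{(n-1,1)}^{\otimes i}$ for $0\le i \le \max(R) \le \max(T)=m$, the alternating sum is an integer combination of the same tensor powers, with the highest occurring at most $m$. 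To rule out the trivial tensor power $S_{(n-1,1)}^{\otimes 0} = S_{(n)}$, I would observe that the coefficient of $(n-1)^0$ in $C_n(R)$ is $C_1(R) = 1$ (a one-letter alphabet gives a unique word of each length), so the coefficient of $S_{(n)}$ in $\tilde H(A^*_{n,k}(T))$ is $\sum_{R\subseteq T}(-1)^{|T|-|R|} = 0$ whenever $T\ne\emptyset$.

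The only real choice to make is how to justify the inversion formula $\tilde H(A^*_{n,k}(T)) = \sum_{R\subseteq T}(-1)^{|T|-|R|}\alpha_n(R)$; this is the expected obstacle, but it is standard once Cohen-Macaulayness (already recorded as Theorem~\ref{BjCLshellable}) has concentrated the Lefschetz module in one degree, and can be derived cleanly from the Hopf trace formula applied to the order complex of $A^*_{n,k}(T)$. Everything else is bookkeeping on the polynomial $C_n(S)$ in $(n-1)$.
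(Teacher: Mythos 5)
Your proposal is correct and follows essentially the same route as the paper: the paper derives the chain count as a recurrence $\alpha_n(S)=\alpha_n(S\setminus\{s_p\})\sum_{i=0}^{s_p-s_{p-1}}\binom{s_p}{i}(n-1)^i$ with base case $\alpha_n(\{s_1\})=n^{s_1}$, which iterated is exactly your product formula, and it likewise applies Proposition~\ref{BjStproofChains} and then Stanley's inversion identity $\beta_n(T)=\sum_{S\subseteq T}(-1)^{|T|-|S|}\alpha_n(S)$ from \cite{St1}, cancelling the trivial module by the same alternating-sum observation. Your note that the constant term of $C_n(R)$ equals $C_1(R)=1$ is a clean way of phrasing the paper's observation that the trivial module appears exactly once in each $\alpha_n(R)$.
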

  \begin{proof}
  Let $S=\{1\le s_1<s_2<\ldots<s_p\le k\}.$ From Proposition~\ref{BjStproofChains}, it suffices to compute the dimension of the module of maximal chains in $A^*_{n,k}(S)$ as a polynomial in $(n-1).$ 
  Hence \eqref{ChainRecDim} immediately gives the following recursive description for the modules $\alpha_n(S).$ 
  \begin{equation}\label{ChainRec}\alpha_n(S)=\alpha_n(S\backslash\{s_p\})\otimes  \bigoplus_{i=0}^{s_p-s_{p-1}} {s_p\choose i}S_{(n-1,1)}^{\otimes i}, \end{equation}
  and \begin{equation}\label{ChainRecSingle}\alpha_n(\{s_1\})=\bigoplus_{i=0}^{s_1} {s_1\choose i} S_{(n-1,1)}^{\otimes i}.\end{equation}
\noindent
  By induction  it is clear that $\alpha_n(S)$ is a nonnegative integer combination of $S_{(n-1,1)}^{\otimes j}, 0\le j\le m=\max(S).$
   It is also clear that the 0th tensor power, that is, the trivial module $S_{(n)}$, occurs exactly once in each $\alpha_n(S).$ 
  
  Note that when $S=\emptyset,$ the homology is simply the trivial module.
  The claim about the decomposition of the homology into tensor powers of $S_{(n-1,1)}$ now follows from 
   Stanley's  theory of rank-selected homology representations \cite{St1}. We have
  \begin{equation}\label{RPSbeta}
  \alpha_n(T) =\sum_{S\subseteq T} \beta_n(S) \quad\text{  and thus  }\quad\beta_n(T)=\sum_{S\subseteq T} (-1)^{|T|-|S|} \alpha_n(S),\end{equation}
  where $\beta_n(S)$ is the representation of $S_n$ on the homology of the rank-selected subposet $A^*_{n,k}(S)$ of $A^*_{n,k}.$  When $T$ is nonempty, it is clear from the previous paragraph that the occurrences of the 0th tensor power, which equals $S_{(n)},$ all cancel in ~\eqref{RPSbeta}; the trivial module occurs with coefficient $\sum_{S\subseteq T} (-1)^{|T|-|S|},$ which is zero.  Hence only  positive tensor powers will appear.  
   \end{proof}
  
  Thus Theorem~\ref{RankSelectedChains}   supports Conjecture~\ref{Conj:tensorpower}.
  Note that it is easy to concoct signed integer combinations of tensor powers that are not true $S_n$-modules. For instance,  the integer combination $S_{(n-1,1)}^{\otimes 2}-2\, S_{(n-1,1)}$ decomposes into $S_{(n)}+S_{(n-2,2)}S_{(n-2,1,1)}-S_{(n-1,1)},$ while $S_{(n-1,1)}^{\otimes 2}-\, S_{(n-1,1)}=S_{(n)}+S_{(n-2,2)}+S_{(n-2,1,1)}$ is a true $S_n$-module.  Also see Theorem~\ref{myBrauer} later in the paper.

  It is worth pointing out the special case  for the full poset $A^*_{n,k}.$
  
   \begin{thm}\label{Chains} The action of $S_n$ on the maximal chains of ${A^*_{n,k}}$ decomposes into the direct sum of tensor powers 
  \[ S_{(n)}\oplus \bigoplus_{j=1}^k c(k+1,j) S_{(n-1,1)}^{\otimes k+1-j},\]
  where $c(k+1,j)$ is the number of permutations in $S_{k+1}$ with exactly $j$ cycles in its disjoint cycle decomposition.
  \end{thm}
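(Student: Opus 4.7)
The plan is to invoke Proposition~\ref{BjStproofChains}, so the task reduces to counting the maximal chains of $A^*_{n,k}$ and showing that the total is a polynomial in $(n-1)$ whose coefficients are the unsigned Stirling numbers $c(k+1,j)$. (The fixed-point hypothesis for the family $A^*_{n,k}$ is immediate, since the subposet of words fixed by $g\in S_n$ consists exactly of the words in the sub-alphabet $\mathrm{fix}(g)$, i.e.~it is $A^*_{\mathrm{fix}(g),k}$.)

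First I would count the covers. By Theorem~\ref{Bjgf}(2), the number of words of length $p$ lying above a word $\beta$ of length $k_0$ depends only on $k_0$ and equals $\sum_{i=0}^{p-k_0}\binom{p}{i}(n-1)^i$. Specialising to $p=k_0+1$ shows that every word $w$ of length $i$ in $A^*$ is covered by exactly $1+(i+1)(n-1)$ words of length $i+1$. A maximal chain of $A^*_{n,k}$ is a sequence $w_1 < w_2 < \cdots < w_k$ with $|w_i|=i$, so the total number of such chains is
\[
\prod_{i=0}^{k-1}\bigl(1+(i+1)(n-1)\bigr)=\prod_{j=1}^{k}\bigl(1+j(n-1)\bigr).
\]

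Next I would expand this product in powers of $x=n-1$ using the standard generating function $x(x+1)(x+2)\cdots(x+k)=\sum_{j=0}^{k+1} c(k+1,j)\,x^{j}$ for the unsigned Stirling numbers of the first kind. Substituting $x\mapsto 1/x$ and clearing denominators yields the dual identity
\[
\prod_{j=1}^{k}(1+jx)=\sum_{j=1}^{k+1} c(k+1,j)\,x^{k+1-j},
\]
so that the number of maximal chains of $A^*_{n,k}$ equals $\sum_{j=1}^{k+1} c(k+1,j)\,(n-1)^{k+1-j}$. The term $j=k+1$ contributes $c(k+1,k+1)=1$ copy of $(n-1)^{0}$.

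Finally, Proposition~\ref{BjStproofChains} translates the $(n-1)$-coefficient $c(k+1,j)$ of $(n-1)^{k+1-j}$ directly into a multiplicity of $S_{(n-1,1)}^{\otimes(k+1-j)}$ in the chain module, with the constant term $c(k+1,k+1)=1$ accounting for the single copy of the trivial representation $S_{(n)}$. Summing over $j=1,\dots,k+1$ gives the claimed decomposition. I do not expect a genuine obstacle: the only non-routine ingredient is the Stirling identity, which is classical, and the rest is an immediate application of the machinery already assembled in Proposition~\ref{BjStproofChains} and Theorem~\ref{Bjgf}(2).
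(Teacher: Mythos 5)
Your proof is correct and follows essentially the same route as the paper: count maximal chains via Theorem~\ref{Bjgf}(2) (the paper does this through the already-established recurrence~\eqref{ChainRec}, you by directly multiplying the number of covers rank by rank, but these give the same Viennot product $\prod_{j=1}^k(1+j(n-1))$), expand it in powers of $(n-1)$ via the Stirling generating function $t(t+1)\cdots(t+k)=\sum_j c(k+1,j)t^j$, and apply Proposition~\ref{BjStproofChains}. The only cosmetic difference is how the degree-reversal in the Stirling identity is handled, which is immaterial.
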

  
  \begin{proof}   Specialising \eqref{ChainRecDim} to the case $S=[1,k]$ gives the recurrence 
  $\dim\alpha_n([1,k]) =\dim\alpha_n([1,k-1]) (1+k(n-1)),$  and clearly $\dim\alpha_n([1,1])=n.$
  It follows that 
  \[\dim\alpha_n([1,k])=\prod_{i=1}^{k} (1+i(n-1)),\]
  a formula  due to Viennot \cite[Lemma 4.1, Proposition 4.2]{V}.
  
  Using the generating function (see \cite{St3EC1})  $\sum_{j=1}^m c(m,j)t^j = t(t+1)(t+2)\ldots (t+(m-1))$,  we find that 
  \[\dim\alpha_n([1,k])=1+\sum_{j=1}^k c(k+1,j) (n-1)^{k+1-j}.\]
  Invoking Proposition~\ref{BjStproofChains}, the result follows, noting that the constant term in the above expression corresponds to the occurrence of the trivial representation. \end{proof}

  By expanding the expression for $\alpha_n(S)$ in ~\eqref{ChainRec}, we have the following  observation. Although $S_{(n-1,1)}$ is a quotient of two permutation modules, it is not clear how to deduce this corollary directly.  Later in the paper we will examine these tensor powers more carefully; see Lemma~\ref{RefRephPos}.
  
  \begin{cor} The $S_n$-module $S_{(n)}\oplus \bigoplus_{j=1}^k c(k+1,j) S_{(n-1,1)}^{k+1-j}$ is in fact a permutation module.  
  More generally, for any subset $S=\{1\le s_1<\ldots<s_p\le k\}$ of $[1,k],$ the $S_n$-module 
  \begin{equation}\label{tensorchain}\bigotimes_{r=1}^p\left(\bigoplus_{i=0}^{s_r-s_{r-1}}{s_r\choose i} S_{(n-1,1)}^{\otimes i}\right), s_0=1,\end{equation}
  is a permutation module.
  \end{cor}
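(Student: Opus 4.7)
The plan is to prove the corollary by showing that each displayed expression is literally equal, as an $S_n$-module, to $\alpha_n(S)$ for the appropriate $S$. Since $\alpha_n(S)$ is by construction a permutation module (the symmetric group acts by permuting the set of maximal chains of $A^*_{n,k}(S)$), this identification immediately yields the conclusion.

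First I would iterate the recurrence~\eqref{ChainRec}. Starting from the base case $\alpha_n(\{s_1\}) = n^{s_1} = \sum_{i=0}^{s_1}\binom{s_1}{i}(n-1)^i$ and applying~\eqref{ChainRec} repeatedly, one obtains
\[\dim \alpha_n(S) = \prod_{r=1}^p\sum_{i=0}^{s_r-s_{r-1}}\binom{s_r}{i}(n-1)^i,\]
with the natural initial index $s_0 = 0$, so that the $r=1$ factor recovers $n^{s_1}$ (the statement's $s_0=1$ appears to be a typographical slip for $s_0=0$).

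Next I would invoke Proposition~\ref{BjStproofChains} with $P_n = A^*_{n,k}(S)$. The crucial ingredient, already exploited repeatedly in the paper, is that the fixed-point subposet of $A^*_{n,k}(S)$ under $g\in S_n$ is isomorphic to $A^*_{\mathrm{fix}(g),k}(S)$, so the trace of $g$ on $\alpha_n(S)$ equals $\dim \alpha_{\mathrm{fix}(g)}(S)$. Substituting $n\mapsto \mathrm{fix}(g)$ in the product formula and using $\chi_{S_{(n-1,1)}}(g) = \mathrm{fix}(g)-1$, this trace agrees exactly with the character, at $g$, of
\[\bigotimes_{r=1}^p \Bigl(\bigoplus_{i=0}^{s_r-s_{r-1}}\binom{s_r}{i}\, S_{(n-1,1)}^{\otimes i}\Bigr),\]
where one uses multiplicativity of characters under tensor product and additivity under direct sum. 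Equality of characters on all of $S_n$ gives equality of $S_n$-modules, and since the left-hand side is a permutation module, so is the tensor product.

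The first assertion of the corollary is the special case $S = [1,k]$: Theorem~\ref{Chains} already identifies $\alpha_n([1,k])$ with $S_{(n)}\oplus \bigoplus_{j=1}^k c(k+1,j)\, S_{(n-1,1)}^{\otimes(k+1-j)}$, and the same permutation-module observation applies. There is no substantive obstacle here; the only mild bookkeeping issue is ensuring the base of the iterated recurrence lines up correctly under the convention $s_0=0$.
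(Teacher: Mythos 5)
Your proof is correct and is essentially the paper's own argument, just spelled out in more detail: identify the displayed module with $\alpha_n(S)$ by iterating the recurrence~\eqref{ChainRec} and invoking Proposition~\ref{BjStproofChains}, then observe that $\alpha_n(S)$ is a permutation module by construction. You are also right that $s_0=1$ in the statement is a typographical slip for $s_0=0$, since the $r=1$ factor must reproduce $\alpha_n(\{s_1\})=n^{s_1}=\sum_{i=0}^{s_1}\binom{s_1}{i}(n-1)^i$, which requires $s_1-s_0=s_1$.
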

  
  \begin{proof} The expression ~\eqref{tensorchain} gives the $S_n$-action on the chains of the rank-selected subposet 
  $A^*_{n,k}(S)$ and is therefore a permutation module. \end{proof} 
  
    By applying Proposition~\ref{BjStproofChains}, we have the following two descriptions of the action on chains between two ranks. 
  \begin{prop}\label{2chains} For $S=\{1\le s_1<s_2\le k\},$ 
  $\alpha_n(\{s_1<s_2\})$ is given by 
  \begin{enumerate}
  
  \item \[\left(\bigoplus_{j=0}^{s_1} \binom{s_1}{j} 
  S_{(n-1,1)}^{\otimes j}\right)
  \bigotimes \left( \bigoplus_{i=0}^{s_2-s_1} \binom{s_2}{i} S_{(n-1,1)}^{\otimes i}\right)\]
  \item %
and also by  \[ (S_{(n-1,1)}\oplus S_{(n)})^{\otimes s_1}
  \bigotimes \bigoplus_{j=0}^{s_2-s_1}  \left[\binom{s_1+j-1}{j}S_{(n-1,1)}^{\otimes j}\bigotimes
  (S_{(n-1,1)}\oplus S_{(n)})^{\otimes s_2-s_1-j}\right].\]
  \end{enumerate}
  \end{prop}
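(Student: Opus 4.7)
The plan is to apply the recurrence \eqref{ChainRec} with $p=2$, which gives
\[\alpha_n(\{s_1<s_2\}) = \alpha_n(\{s_1\})\cdot \sum_{i=0}^{s_2-s_1}\binom{s_2}{i}(n-1)^i,\]
and $\alpha_n(\{s_1\})=n^{s_1}$, since the rank-selected poset $A^*_{n,k}(\{s_1\})$ is just the antichain of all $n^{s_1}$ words of length $s_1$. Both formulas (1) and (2) will then fall out of two different ways of expressing $n^{s_1}$ as a polynomial in $(n-1)$, together with Proposition~\ref{BjStproofChains}, which allows us to pass from polynomial expressions in $(n-1)$ to nonnegative combinations of tensor powers of $S_{(n-1,1)}$.

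For formulation (1), I would simply expand the first factor as $n^{s_1}=\sum_{j=0}^{s_1}\binom{s_1}{j}(n-1)^j$. Multiplying and invoking Proposition~\ref{BjStproofChains}, each $(n-1)^j$ is replaced by $S_{(n-1,1)}^{\otimes j}$, giving the tensor product in (1). This step is essentially bookkeeping.

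For formulation (2), the idea is to keep $n^{s_1}$ intact and recognise it as the character of $(S_{(n-1,1)}\oplus S_{(n)})^{\otimes s_1}$, since $S_{(n-1,1)}\oplus S_{(n)}$ is the natural permutation module on the alphabet, whose character at $g\in S_n$ is $\mathrm{fix}(g)=n$ in polynomial form. The nontrivial step is to rewrite the second factor:
\[\sum_{m=0}^{s_2-s_1}\binom{s_2}{m}(n-1)^m = \sum_{\substack{i,j\ge 0\\ i+j=s_2-s_1}}\binom{s_1+j-1}{j}(n-1)^j\,n^i.\]
Expanding $n^i=\sum_{\ell}\binom{i}{\ell}(n-1)^\ell$ on the right-hand side and collecting coefficients of $(n-1)^m$, this reduces to the binomial identity
\[\binom{s_2}{m}=\sum_{j=0}^m\binom{s_1-1+j}{j}\binom{s_2-s_1-j}{m-j},\]
which is the standard parallel-summation (Chu--Vandermonde) identity
$\sum_{k}\binom{r+k}{k}\binom{s-k}{n-k}=\binom{r+s+1}{n}$ with $r=s_1-1$, $s=s_2-s_1$, $n=m$. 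Once this is in hand, Proposition~\ref{BjStproofChains} again converts the $(n-1)^j n^i$ coefficients into the claimed tensor product of $S_{(n-1,1)}^{\otimes j}$ with $(S_{(n-1,1)}\oplus S_{(n)})^{\otimes i}$.

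The only part that requires thought is the identification of the correct identity for (2); this is the main \emph{computational} obstacle, but it is a well-known one, so no genuine difficulty arises. Everything else is a direct application of the machinery already set up in Proposition~\ref{BjStproofChains} and the recurrence \eqref{ChainRec}.
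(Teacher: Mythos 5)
Your proof is correct, and the main machinery — reducing everything to polynomial identities in $(n-1)$ via the recurrence \eqref{ChainRec} and then invoking Proposition~\ref{BjStproofChains} — is exactly how the paper proceeds. For formula (1) your argument and the paper's are identical. The one genuine difference is how formula (2) is obtained: the paper does not derive it from formula (1) via a binomial identity, but reads it off directly from Part~(3) of Theorem~\ref{Bjgf} (the zeta-function generating function), by extracting the coefficient of $t^{s_2}$ in
\[\dfrac{t^{s_1}}{(1-nt)(1-(n-1)t)^{s_1}},\]
which gives $n^{s_1}\sum_{i+j=s_2-s_1}\binom{s_1+j-1}{j}(n-1)^j n^i$ immediately. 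The paper then records the binomial identity you prove by Chu--Vandermonde as a corollary (Eq.~\eqref{MysteriousBinomIdentity?}, proved there by an explicit bijection rather than by citing parallel summation). Your route avoids invoking the zeta-function formula but requires recognising the identity, which you correctly pin down as parallel summation $\sum_k\binom{r+k}{k}\binom{s-k}{n-k}=\binom{r+s+1}{n}$ with $r=s_1-1$, $s=s_2-s_1$, $n=m$; the paper's route gets formula (2) for free once Theorem~\ref{Bjgf}(3) is on hand and the identity falls out as a bonus. Both are fine; the paper's is marginally more economical given the tools already assembled, while yours is more self-contained at this point in the argument.
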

  \begin{proof} From Proposition~\ref{BjStproofChains}, it suffices to compute the dimension of the module of chains as a polynomial in $(n-1).$ From  Eqn.~(\ref{ChainRecDim}),  the dimension   of $\alpha_n(\{s_1<s_2\})$ is given by 
  \[n^{s_1}\sum_{i=0}^{s_2-s_1} \binom{s_2}{i} (n-1)^i.\qquad (A)\]
  Recall  Bj\"orner's generating function for the zeta function of subword order, Part (3) of Theorem~\ref{Bjgf}, which we now use to count the number of chains from a fixed element to all elements above it of a fixed rank.  This  gives (by extracting the coefficient of $t^{s_2}$ in the right-hand side of Part (3)):
  \[n^{s_1}\sum_{i,j\ge0, i+j=s_2-s_1}  \binom{s_1+j-1}{j} (n-1)^j n^i=
  n^{s_1}\sum_{j=0}^{s_2-s_1}  \binom{s_1+j-1}{j} (n-1)^j n^{s_2-s_1-j}.\qquad (B)\]
 Since as usual $(n-1)$ is the dimension of $S_{(n-1,1)}$ and $n$ is the dimension of $S_{(n)}\oplus S_{(n-1,1)},$ invoking Proposition~\ref{BjStproofChains},  we conclude  that $(A)$ and $(B)$  correspond respectively to  the $S_n$-module decompositions in Part (1) and Part (2). \end{proof}
 
\begin{rk} 
  By expanding in powers of $(n-1)$, the equivalence of the two expressions for the dimension of $\alpha_n(\{s_1<s_2\})$ is equivalent to the following  binomial coefficient identity:

\begin{center}$  \binom{a+r}{i}=\sum_{j=0}^i \binom{r-j}{i-j}\binom{a+j-1}{j} \text{ for } 0\le i\le r, $\end{center}
\noindent
  or equivalently, putting $k=r-i,$ 

\begin{center} $ \binom{a+r}{a+k}=\sum_{j=0}^{r-k} \binom{r-j}{k}\binom{a+j-1}{a-1}
   \text{ for } 0\le k\le r.$\end{center}
\end{rk}
  
 We can now show that Conjecture~\ref{Conj:tensorpower} is true for rank sets of size 2.
 
  \begin{thm}\label{TwoRanks}  Let $S=\{1\le s_1<s_2\le k\}$ be a rank-set of size 2 in $A^*_{n,k}.$ The homology representation of $A^*_{n,k}(S)$ is given by 
  \[\sum_{v=1}^{s_1} S_{(n-1,1)}^{\otimes v} (c_v -\binom{s_1}{v}) \bigoplus \sum_{v=1+s_1}^{s_2} S_{(n-1,1)}^{\otimes v} c_v,\]
  where $c_v$ is the following positive integer:
  \[c_v
  = \sum_{j=1}^{\min(v, s_2-s_1)}\binom{s_2-j}{v-j}\binom{s_1+j-1}{j} .\]
  Moreover $c_v\ge \binom{s_1}{v}$ when $v\le s_1,$ and hence 
  the homology is a nonnegative integer combination of positive tensor powers of $S_{(n-1,1)}.$
  \end{thm}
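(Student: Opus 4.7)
The plan is to apply Stanley's rank-selection inclusion-exclusion, namely
\[\beta_n(T) = \sum_{S\subseteq T}(-1)^{|T|-|S|}\alpha_n(S),\]
to the doubleton $T=\{s_1,s_2\}$. This reduces the problem to decomposing the four chain modules $\alpha_n(\emptyset)$, $\alpha_n(\{s_1\})$, $\alpha_n(\{s_2\})$, $\alpha_n(\{s_1,s_2\})$ into tensor powers of $S_{(n-1,1)}$ and cancelling terms. All four are known to be $\mathbb{Z}_{\ge 0}$-combinations of such tensor powers by Theorem~\ref{RankSelectedChains}, and by Proposition~\ref{BjStproofChains} the coefficients are read off by expanding each dimension as a polynomial in $(n-1)$.

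First I would handle the easy terms: since $\dim\alpha_n(\{s_i\})=n^{s_i}=\sum_v\binom{s_i}{v}(n-1)^v$, we get $\alpha_n(\{s_i\})=\bigoplus_v\binom{s_i}{v}S_{(n-1,1)}^{\otimes v}$, and $\alpha_n(\emptyset)=S_{(n)}$. Next I would decompose $\alpha_n(\{s_1,s_2\})$ by starting from the dimension formula in Proposition~\ref{2chains}(2), namely
\[n^{s_1}\sum_{i+j=s_2-s_1}\binom{s_1+j-1}{j}(n-1)^j n^i.\]
Writing $n^{s_1+i}=\sum_\ell\binom{s_1+i}{\ell}(n-1)^\ell$ and collecting the coefficient of $(n-1)^v$ yields $\sum_{j=0}^{\min(v,s_2-s_1)}\binom{s_1+j-1}{j}\binom{s_2-j}{v-j}$. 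Separating the $j=0$ summand (which equals $\binom{s_2}{v}$, using $\binom{s_1-1}{0}=1$) from the $j\ge 1$ summands (whose total is exactly $c_v$) gives $\alpha_n(\{s_1,s_2\})=\bigoplus_v\bigl(\binom{s_2}{v}+c_v\bigr)S_{(n-1,1)}^{\otimes v}$.

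Substituting into inclusion-exclusion, the coefficient of $S_{(n-1,1)}^{\otimes v}$ in $\beta_n(\{s_1,s_2\})$ becomes
\[\bigl(\tbinom{s_2}{v}+c_v\bigr)-\tbinom{s_1}{v}-\tbinom{s_2}{v}+[v=0]=c_v-\tbinom{s_1}{v}+[v=0].\]
The $\binom{s_2}{v}$ contributions cancel. Since $c_0=0$ (the defining sum is empty when $v=0$), the $v=0$ coefficient is $0$, confirming that the trivial module does not appear. For $v\ge 1$ one reads off exactly the claimed formula, noting that $\binom{s_1}{v}=0$ for $v>s_1$ and $c_v=0$ for $v>s_2$.

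For the nonnegativity claim on the range $1\le v\le s_1$, I would simply bound $c_v$ below by its $j=1$ term: $c_v\ge \binom{s_2-1}{v-1}\binom{s_1}{1}=s_1\binom{s_2-1}{v-1}$. Since $s_1\le s_2-1$ and $v-1\le s_1-1$, monotonicity of binomials in the upper index gives $\binom{s_2-1}{v-1}\ge\binom{s_1-1}{v-1}$, whence $c_v\ge s_1\binom{s_1-1}{v-1}=v\binom{s_1}{v}\ge\binom{s_1}{v}$. The main obstacle is really just the bookkeeping in expanding $\alpha_n(\{s_1,s_2\})$ and tracking the cancellation of the $\binom{s_2}{v}$ terms; the positivity inequality itself falls out from the single $j=1$ summand of $c_v$.
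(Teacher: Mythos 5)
Your proof is correct and takes essentially the same approach as the paper: Stanley's inclusion-exclusion $\beta_n(T)=\sum_{S\subseteq T}(-1)^{|T|-|S|}\alpha_n(S)$, the expansion of $\alpha_n(\{s_1,s_2\})$ from Proposition~\ref{2chains}(2), isolation of the $j=0$ term to cancel $\binom{s_2}{v}$, and the lower bound for $c_v$ via its $j=1$ summand. Your derivation of the inequality $c_v\ge\binom{s_1}{v}$ via $\binom{s_2-1}{v-1}\ge\binom{s_1-1}{v-1}$ and the identity $s_1\binom{s_1-1}{v-1}=v\binom{s_1}{v}$ is a slightly cleaner packaging than the paper's factoring, but it is the same idea.
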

  
  \begin{proof} In order to establish the positivity, it is (curiously) easier to work with the second formulation of Proposition~\ref{2chains}.  By Theorem~\ref{BjStproof}, it is enough to show that the dimension of the homology module is a polynomial in $(n-1)$ with nonnegative integer coefficients.
   We have \[\beta_n(\{s_1<s_2\})=\alpha_n(\{s_1<s_2\})-\alpha_n(\{s_1\})-\alpha_n(\{s_2\})+\alpha_n(\emptyset)\]
   which yields, (in terms of dimensions, from $(B)$ in the proof of Proposition~\ref{2chains})
   \begin{align*} 
   & n^{s_1} \sum_{j=0}^{s_2-s_1} n^{s_2-s_1-j} (n-1)^j \binom{s_1+j-1}{j} -n^{s_2}-n^{s_1}+1\\
   &=\sum_{j=1}^{s_2-s_1} n^{s_2-j} (n-1)^j 
   \binom{s_1+j-1}{j} -(n^{s_1}-1).
   \end{align*}
  Expanding $n^{s_1}$ and $n^{s_2-j}$ in nonnegative powers of $(n-1)$ gives
  $\beta_n(\{s_1<s_2\})=$
  \begin{align} \sum_{j=1}^{s_2-s_1}\sum_{u=0}^{s_2-j} \binom{s_2-j}{u} (n-1)^{u+j}\binom{s_1+j-1}{j}
  -\sum_{j=1}^{s_1} \binom{s_1}{j} (n-1)^j\\
  =\sum_{v=1}^{s_2} (n-1)^v c_v -\sum_{j=1}^{s_1} \binom{s_1}{j} (n-1)^j,  
  \end{align}
  where \[c_v=\sum_{\stackrel{(u,j): u+j=v}{ 1\le j\le s_2-s_1,\, 0\le u\le s_2-j}}
  \binom{s_2-j}{v-j}\binom{s_1+j-1}{j}.\] 
  The latter sum runs over all $j$ such $1\le j\le s_2-s_1$ and $0\le v-j\le s_2-j,$ i.e. over all $j=1,\ldots, \min(v, s_2-s_1),$ as stated.
  
  Now $c_v$ is a sum of nonnegative integers for each $v=1,\ldots, s_2.$ When $v\le s_1,$ the $j=1$ summand of $c_v$ can be seen to be $\binom{s_2-1}{v-1} s_1$, and so 
  \begin{align*}  c_v-\binom{s_1}{v}\ge \binom{s_2-1}{v-1} s_1-\binom{s_1}{v}&=
  \dfrac{s_1!}{v!(s_2-v)!} (v \dfrac{(s_2-1)!}{(s_1-1)!}-\dfrac{(s_2-v)!}{(s_1-v)!})\\
  &=\dfrac{s_1! (s_2-s_1)!}{v!(s_2-v)!}( v\binom{s_2-1}{s_2-s_1}
  -\binom{s_2-v}{s_2-s_1})
  \end{align*}
  and this is clearly nonnegative, since  $\binom{s_2-1}{s_2-s_1}
  \ge \binom{s_2-v}{s_2-s_1}$ for $v\ge 1.$ We have shown that the Betti number of $\beta_n(\{s_1<s_2\})$ is a nonnegative integer combination of  positive powers of $(n-1),$ as claimed.
  \end{proof}
  
\begin{rk}\label{Ranks1anda}  Let $s_1=1$, and consider the two ranks $\{1<s_2\}.$
 The homology of the rank-selected subposet is then 
 \[\bigoplus_{v=2}^{s_2-1}\binom{s_2}{v-1}S_{(n-1,1)}^{\otimes v} \bigoplus 
(s_2-1) S_{(n-1,1)}^{\otimes s_2}.\]
 \end{rk}

\section{Tensor powers of the reflection  representation I}\label{PowersOfRefl}

In this section we explore the tensor powers $S_{(n-1,1)}^{\otimes k}.$  The paper 
\cite{CG} gives a combinatorial model for determining the multiplicity of an irreducible in the $k$th tensor power, and an explicit formula in the case when $n$ is sufficiently larger than $k.$  We give general formulas that apply to the case of arbitrary tensor powers.

  We use symmetric functions to describe some of the results that follow.  The homogeneous symmetric function $h_n$ is the Frobenius characteristic, denoted $\mathrm{ch},$ of the trivial representation of $S_n.$ Also let $*$ denote the internal product on the ring of symmetric functions, so that the Frobenius characteristic of the  Kronecker product of two $S_n$-modules is the internal product of the two characteristics. See \cite[p.115]{M} and \cite[Chapter 7, p. 476]{St4EC2}.
Recall that the natural representation of $S_n$ is the permutation action on a set of $n$ objects.  The stabiliser of any one object is 
the Young subgroup $S_1\times S_{n-1},$ and hence the natural representation is given by the induced module $1\uparrow_{S_1\times S_{n-1}}^{S_n}$, with Frobenius characteristic $h_1h_{n-1}$. In particular we have the decomposition  %
\[1\uparrow_{S_1\times S_{n-1}}^{S_n}=S_{(n)}\oplus S_{(n-1,1)}=S_{(n,1)}\downarrow_{S_n}^{S_{n+1}}.\]
The following lemma is an easy exercise in permutation actions. We sketch a proof for completeness.
\begin{lem}\label{Stirling} Let $V_{j,n}$ denote the permutation module obtained from the $S_n$-action on the cosets of the Young subgroup $S_1^j\times S_{n-j}.$ Then the $k$th tensor power of the natural representation $V_{1,n}$ of $S_n$ decomposes into a sum of $S(k,j)$ copies of $V_{j,n},$ where $S(k,j)$ is the Stirling number of the second kind:
\begin{equation}\label{StirlingNat} V_{1,n}^{\otimes k}
=\sum_{j=1}^{\min(n,k)}S(k,j)\, V_{j,n}, \quad \text{ and thus} \quad
(h_1h_{n-1})^{* k}=\sum_{j=1}^{\min(n,k)} S(k,j) \, h_1^j h_{n-j}.
\end{equation}
\end{lem}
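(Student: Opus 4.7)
The plan is to identify the tensor power $V_{1,n}^{\otimes k}$ as a permutation module on a concrete set, decompose that set into $S_n$-orbits, and recognize each orbit as a copy of $V_{j,n}$.

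First, let $X$ be the underlying $n$-element set on which $S_n$ acts naturally, so that $V_{1,n}$ has $X$ as a basis (with $S_n$ permuting basis vectors). Then $V_{1,n}^{\otimes k}$ has as basis the Cartesian power $X^k$, i.e.\ the set of ordered $k$-tuples from $X$, with $S_n$ acting diagonally. Thus $V_{1,n}^{\otimes k}$ is itself a permutation module, and to describe its structure it suffices to describe the $S_n$-orbits on $X^k$ and a representative stabilizer in each orbit.

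Next, two tuples $(x_1,\ldots,x_k)$ and $(y_1,\ldots,y_k)$ lie in the same $S_n$-orbit if and only if they induce the same set partition of $[k]$, where $i$ and $i'$ are in the same block iff $x_i=x_{i'}$. Hence orbits on $X^k$ are indexed by set partitions $\pi$ of $[k]$ with at most $n$ blocks; if $\pi$ has $j$ blocks then a tuple in the corresponding orbit is determined by an ordered choice of $j$ distinct values from $X$ (one per block), and conversely every such ordered choice of distinct values arises. The stabilizer of a representative is precisely the pointwise stabilizer of $j$ distinguished points of $X$, namely the Young subgroup $S_1^j\times S_{n-j}$. Therefore each orbit is isomorphic as an $S_n$-set to the coset space $S_n/(S_1^j\times S_{n-j})$, whose linearization is $V_{j,n}$. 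Since the number of set partitions of $[k]$ with exactly $j$ blocks is the Stirling number $S(k,j)$, summing over $j=1,\ldots,\min(n,k)$ yields the first displayed identity.

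Finally, to pass to symmetric functions, I note that $V_{j,n}=1\uparrow_{S_1^j\times S_{n-j}}^{S_n}$, so by the standard formula $\mathrm{ch}(V_{j,n})=h_1^j h_{n-j}$. Under the Frobenius characteristic, the inner tensor product of $S_n$-modules corresponds to the internal product $*$ on symmetric functions of degree $n$, so $\mathrm{ch}(V_{1,n}^{\otimes k})=(h_1 h_{n-1})^{*k}$, and applying $\mathrm{ch}$ to both sides of the module decomposition produces the second identity. There is no real obstacle in this argument; the only minor point to verify carefully is the identification of the orbit of a tuple inducing a $j$-block partition with the coset space of $S_1^j\times S_{n-j}$, which is immediate from the fact that the stabilizer of a $j$-tuple of distinct points in $X$ is the pointwise stabilizer of those $j$ points.
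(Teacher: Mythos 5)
Your argument is correct and coincides with the paper's own proof: both realize $V_{1,n}^{\otimes k}$ as the permutation module on ordered $k$-tuples from the $n$-element set, identify the $S_n$-orbits with set partitions of $[k]$ having at most $n$ blocks, and read off the stabilizer of a $j$-block orbit as $S_1^j\times S_{n-j}$. The passage to symmetric functions via the Frobenius characteristic is likewise the same.
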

\begin{proof} If the module $V_{1,n}$ is realised as $\mathbb{C}^n$ with basis $\{v_1\ldots,v_n\}$, say, then $V_{1,n}^{\otimes k}$ is realised by 
the $k$th tensor power of $\mathbb{C}^n,$ with $n^k$ basis elements 
$v_{i_1}\otimes \ldots \otimes v_{i_k}, \quad 1\le i_1, \ldots, i_k\le n.$
The $S_n$-action now permutes these $n^k$ basis elements.  To determine the orbits, note that 
there is a surjection from this basis of tensors to the set partitions of a $k$-element set into nonempty blocks.  Each such partition with $j$ blocks indexes  an orbit of the $S_n$-action, with stabiliser (conjugate to) $S_1^j\times S_{n-j}$. The blocks correspond to repetitions of a $v_i$ in the tensor; thus $a,b$ belong in the same block if $v_a=v_b$ in the tensor. The orbit is the transitive permutation representation with Frobenius characteristic $h_1^j h_{n-j}.$

The last statement is now immediate.  %
\end{proof}

\begin{ex} We illustrate the above argument  with an example.  With $n=5$ and $k=7,$ the tensor 
$v_5\otimes v_2\otimes v_2\otimes v_4\otimes v_2\otimes  v_4\otimes v_5$ maps to the partition $17-235-46$ of a set of size $k=7$ into $j=3$ blocks, corresponding to the three distinct basis elements $v_2, v_4, v_5$ of $\mathbb{C}^n$.  Its orbit under $S_5$ consists of all basis tensors 
$v_{i_1}\otimes \ldots \otimes v_{i_7}$ such that $v_{i_1}=v_{i_7}, v_{i_2}=v_{i_3}=v_{i_5},$ and $v_{i_4}=v_{i_6}.$
Writing  $S_A$ for the permutations of the elements of $A,$ for any subset $A$ of positive integers, the stabiliser is  $S_{\{5\}}\times S_{\{2\}} \times S_{\{4\}} \times S_{\{1,3\}},$ conjugate to the Young subgroup indexed by the integer partition $(2,1,1,1)$ of 5.  
\end{ex}

\begin{rk}\label{IndRes}  This lemma can also be proved by iterating a standard representation theory result, namely that for finite groups $G$ and $H$ with $H$ a subgroup of $G,$ and $G$-module $W,$ $H$-module $V,$ $W\otimes (V\uparrow_H^G)=
(W\downarrow_H\otimes V)\uparrow_H^G.$  In our case $G=S_n$ and $H$ is the Young subgroup $S_1\times S_{n-1}.$ %
\end{rk}
\begin{thm}\label{StirlingHom} The top homology of $A^*_{n,k}$ has Frobenius characteristic 
\[\sum_{i=0}^{\min(n,k)} h_1^i h_{n-i} \left( \sum_{r=0}^{k-i} 
(-1)^{r}\binom{k}{r}S(k-r,i)\right).\]
\end{thm}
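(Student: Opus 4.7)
The plan is to compute the Frobenius characteristic of $S_{(n-1,1)}^{\otimes k}$ directly, since by the Corollary following Theorem~\ref{WHsubword} (the Bj\"orner--Stanley theorem), the top homology of $A^*_{n,k}$ is precisely this tensor power. The key identity to exploit is
\[
S_{(n-1,1)} = V_{1,n} - S_{(n)},
\]
which holds as virtual modules because $V_{1,n} = 1\uparrow_{S_1\times S_{n-1}}^{S_n} = S_{(n)}\oplus S_{(n-1,1)}.$ This reduces the problem to expanding a power of a difference of modules and then applying the Stirling-number decomposition already available.

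First, I would raise this identity to the $k$th tensor power and apply the binomial theorem, using the fact that tensoring with the trivial module $S_{(n)}$ is the identity operation, to get
\[
S_{(n-1,1)}^{\otimes k} = \sum_{r=0}^{k}(-1)^r\binom{k}{r}\, V_{1,n}^{\otimes (k-r)}
\]
as a virtual $S_n$-module. Next, I would invoke Lemma~\ref{Stirling}, with the natural convention $V_{1,n}^{\otimes 0} = S_{(n)}$ and $S(0,0)=1$, $S(m,0)=0$ for $m\ge 1$, to rewrite each term as
\[
V_{1,n}^{\otimes (k-r)} = \sum_{i=0}^{\min(n,k-r)} S(k-r,i)\, V_{i,n},
\]
whose Frobenius characteristic is $\sum_{i} S(k-r,i)\, h_1^i h_{n-i}$.

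The final step is to swap the order of summation. Writing $\mathrm{ch}(S_{(n-1,1)}^{\otimes k}) = \sum_r (-1)^r\binom{k}{r} \sum_i S(k-r,i)\, h_1^i h_{n-i}$ and collecting by $i$, the outer sum over $r$ is restricted to $0\le r\le k-i$ (since $S(k-r,i) = 0$ unless $k-r\ge i$), yielding
\[
\mathrm{ch}\bigl(\tilde H_{k-2}(A^*_{n,k})\bigr) = \sum_{i=0}^{\min(n,k)} h_1^i h_{n-i} \left(\sum_{r=0}^{k-i}(-1)^r\binom{k}{r} S(k-r,i)\right),
\]
which is the desired formula. No serious obstacle is expected: the only thing to be careful about is the edge case $r=k$ (giving a pure $h_n$ contribution from $V_{1,n}^{\otimes 0}$), which is handled cleanly by extending the Stirling convention as above.
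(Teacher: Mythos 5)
Your proposal is correct and follows essentially the same route as the paper's proof: both expand $\mathrm{ch}\,S_{(n-1,1)}^{\otimes k}=(h_1h_{n-1}-h_n)^{*k}$ via the binomial theorem for the internal product, invoke Lemma~\ref{Stirling} to write each $(h_1h_{n-1})^{*j}$ in the $h$-basis via Stirling numbers, and then swap the order of summation (the paper reindexes with $j=k-r$ at the end, which you have built in from the start). The edge-case handling via $S(0,0)=1$, $S(m,0)=0$ for $m\ge 1$ is also identical.
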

\begin{proof} Observe that the  Frobenius characteristic of 
$(S_{(n-1,1)})^{\otimes k}$  is the $k$-fold internal product of $(h_1 h_{n-1}-h_n).$ Standard properties of the tensor product make $*$      a commutative and associative product in the ring of symmetric functions, so we have
\begin{align*} &(h_1 h_{n-1}-h_n)^{* k} \\
&=\sum_{j=0}^k \binom{k}{j} (-1)^{k-j}(h_1 h_{n-1})^{* j} * (h_n)^{* (k-j)}=(-1)^k h_n+\sum_{j=1}^k \binom{k}{j} (-1)^{k-j}(h_1h_{n-1})^{* j}\\
&= (-1)^k h_n+\sum_{j=1}^k \binom{k}{j} (-1)^{k-j}
\sum_{i=1}^{\min(n,j)} S(j,i)  h_1^i h_{n-i}\quad 
 \text{from Lemma~\ref{Stirling}}
\end{align*}
\begin{align*}
&= \sum_{j=0}^k \binom{k}{j} (-1)^{k-j}
\sum_{i=0}^{\min(n,j)} S(j,i)  h_1^i h_{n-i}
=\sum_{i=0}^{\min(n,k)} h_1^i h_{n-i} \left( \sum_{j=i}^{k} 
(-1)^{k-j}\binom{k}{j}S(j,i)\right).
\end{align*}
Note that $S(0,0)=1$ and $S(j,0)=0$ for $j\ge 1.$ Putting $r=k-j$ in the last step gives the result.
\end{proof}

Theorem~\ref{ReflRepMain}  gives a different description of this module, from which it will be evident that the coefficients of $h_1^ih_{n-i}$ are positive for $i\ge 2.$

We can now determine the multiplicity of the trivial representation in 
the top homology of $A^*_{n,k}$:
\begin{cor}\label{TrivRep}  Let $n\ge 2.$ The following are equal:
\begin{enumerate}
\item the multiplicity of the trivial representation in $S_{(n-1,1)}^{\otimes k};$
\item the multiplicity of the irreducible $S_{(n-1,1)}$  in 
$S_{(n-1,1)}^{\otimes k-1};$
\item the number 
\[\sum_{r=0}^k (-1)^r \binom{k}{r} \sum_{i=0}^{\min(n,k)} S(k-r,i).\]
\end{enumerate}
When $n\ge k,$ this multiplicity %
equals the number of set partitions of $\{1,\ldots,k\}$ with no singleton blocks.
\end{cor}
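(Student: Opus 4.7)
The plan is to establish the three equalities in turn, then specialize to the case $n\ge k$.

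First, for the equivalence $(1)=(2)$, I would invoke the self-duality of irreducible $S_n$-modules together with the adjunction between tensor product and internal Hom: for any $S_n$-module $W$ and any irreducible $S_n$-module $V$ (which is necessarily self-dual), one has
\[
\langle V^{\otimes k},\, S_{(n)}\rangle \;=\; \langle V^{\otimes (k-1)}\otimes V,\, S_{(n)}\rangle \;=\; \langle V^{\otimes (k-1)},\, V^{\ast}\rangle \;=\; \langle V^{\otimes (k-1)},\, V\rangle.
\]
Specializing to $V=S_{(n-1,1)}$ yields $(1)=(2)$.

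Next, for $(1)=(3)$, I would apply Theorem~\ref{StirlingHom}, which (by the Bj\"orner--Stanley identification of the top homology of $A^*_{n,k}$) gives the Frobenius characteristic of $S_{(n-1,1)}^{\otimes k}$ as an integer combination of the symmetric functions $h_1^i h_{n-i}$ for $0\le i\le \min(n,k)$. The multiplicity of the trivial representation $S_{(n)}$ is obtained by pairing with $s_{(n)}=h_n$. The key observation is that $h_1^i h_{n-i}$ is the Frobenius characteristic of the transitive permutation module $V_{i,n}$ from Lemma~\ref{Stirling}, so by Frobenius reciprocity
\[
\langle h_1^i h_{n-i},\, s_{(n)}\rangle \;=\; \langle 1\!\uparrow_{S_1^i\times S_{n-i}}^{S_n},\, 1_{S_n}\rangle \;=\; 1
\]
for every $i$. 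Summing the scalar coefficients of $h_1^i h_{n-i}$ from Theorem~\ref{StirlingHom} and swapping the order of summation (using the fact that $S(k-r,i)=0$ for $i>k-r$, so that the upper bound $\min(n,k)$ may equivalently be written as $\min(n,k-r)$) produces the expression in (3).

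Finally, when $n\ge k$ we have $\min(n,k)=k\ge k-r$ for every $r\in[0,k]$, so the inner sum collapses to the Bell number
\[
\sum_{i=0}^{k} S(k-r,i) \;=\; \sum_{i=0}^{k-r} S(k-r,i) \;=\; B(k-r),
\]
and the multiplicity becomes $\sum_{r=0}^{k}(-1)^{r}\binom{k}{r}B(k-r)$. A standard inclusion-exclusion on the subsets of $\{1,\ldots,k\}$ prescribed to appear as singleton blocks (the number of partitions in which the $r$ elements of a given subset are all singletons is $B(k-r)$) identifies this alternating sum with the number of set partitions of $\{1,\ldots,k\}$ having no singleton blocks.

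The main obstacle, insofar as there is one, is purely bookkeeping in the middle step: verifying that the upper limit $\min(n,k)$ in (3) is consistent with the bound $\min(n,k-r)$ that arises naturally from the computation, and keeping track of the boundary conventions $S(0,0)=1$ and $S(m,0)=0$ for $m\ge 1$ so that the $r=k$, $i=0$ term contributes correctly.
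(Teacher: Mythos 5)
Your proof is correct and follows essentially the same route as the paper: the equality $(1)=(2)$ via the adjunction $\langle V\otimes W, S_{(n)}\rangle=\langle V,W\rangle$ (the self-duality of $S_{(n-1,1)}$ being the reason $V^\ast=V$), the equality $(1)=(3)$ by pairing Theorem~\ref{StirlingHom} with $h_n$ and using $\langle h_1^i h_{n-i},h_n\rangle=1$ together with the vanishing $S(k-r,i)=0$ for $i>k-r$, and the final identification with $B_k^{\ge 2}$ by inclusion--exclusion on prescribed singleton blocks. No gaps.
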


\begin{proof} The first two multiplicities are equal by standard properties of the tensor product, since 
\[ \langle V\otimes W, S_{(n)}\rangle=\langle V, 
S_{(n)}\otimes W\rangle =\langle V,W\rangle \]
The equivalence with the third formula follows from Theorem~\ref{StirlingHom}, since $\langle h_1^i h_{n-i}, h_n\rangle=1$ for all $i$ (alternatively, $1\uparrow_{S_1^i\times S_{n-i}}^{S_n}$ is a transitive permutation module). 
Let $B_n^{\ge 2}$ denote the number of set partitions of $[n]=\{1,\ldots,n\}$ with no  blocks of size 1, and let $B_n$ denote the $n$th Bell number, that is, the total number of set partitions of $[n].$  Inclusion-exclusion  shows that 
\begin{equation}\label{BellNosingleton}B_n^{\ge 2}=\sum_{r=0}^n (-1)^r \binom{n}{r} B_{n-r},
\end{equation}
since the number of partitions containing a fixed set of $r$ singleton blocks is 
$B_{k-r}.$  
When $n\ge k,$ the formula in Part (3) simplifies to 
\[\sum_{r=0}^k (-1)^r \binom{k}{r} \sum_{i=0}^{k-r} S(k-r,i)
=\sum_{r=0}^k (-1)^r \binom{k}{r} B_{k-r}.\]
That this number is $B^{\ge 2}_k,$ the number of partitions of $[k]$ with no singleton blocks, now  follows from Eqn.~(\ref{BellNosingleton}).  (This is sequence  A000296 in OEIS.)
\end{proof}

Corollary~\ref{TrivRepStableCase} in the next section will give a different expression for the multiplicity of the trivial representation, for arbitrary $n,k,$ as a sum of positive integers.

\section{\lq\lq Almost" an $h$-positive permutation module}

We begin by recalling basic facts about permutation modules.  $V$ is a permutation module for $S_n$ if there exists a basis for $V$ that is permuted by the $S_n$-action.   In particular the character values of a permutation module are all nonnegative. For example, the character $\chi_{(n-1,1)}$ of the reflection representation $S_{(n-1,1)}$ has values 
$\chi_{(n-1,1)}(g) = \textrm{fix}(g)-1$ for each permutation $g\in S_n,$ so it is negative for permutations without fixed points, and the same is true for odd tensor powers of $S_{(n-1,1)}.$ Thus odd tensor powers of $S_{(n-1,1)}$ are not permutation modules. In particular, the homology of $A_{n,k}$ itself need not be a permutation module.

A special case of a permutation module occurs when its Frobenius characteristic is $h$-positive, that is, the coefficients in the basis of homogeneous symmetric functions are nonnegative. It is  well known that the homogeneous symmetric function $h_\lambda$ is the Frobenius characteristic of the transitive permutation representation whose orbit stabiliser is the Young subgroup of $S_n$ indexed by $\lambda.$ Hence the $h$-positivity of the Frobenius characteristic implies that it is a permutation module (whose point stabilisers are Young subgroups), but not conversely.  A nice example is provided by the set partitions of $[4]$ into two blocks of size 2, viz.  $12-34,$ $13-24,$ and $14-23$.  The action of $S_4$ is a transitive permutation module whose point stabiliser is the wreath product  $S_2[S_2],$ but its Frobenius characteristic is $h_4+h_2^2-h_1h_3, $ not $h$-positive.

The goal of this section is to prove the following theorem.

\begin{thm}\label{Homologyhbasis}  Let $T\subseteq [1,k]$  be any nonempty subset of ranks in $A^*_{n,k}.$ The following statements hold for the Frobenius characteristic $F_{n,k}(T)$ of the homology representation $\tilde{H}(A^*_{n,k}(T)):$ 
 \begin{enumerate}  
 \item its expansion in the basis of homogeneous symmetric functions is an integer combination supported on the set $T_{1}(n)=\{h_\lambda: \lambda=(n-r, 1^r), r\ge 1\}.$
 \item $F_{n,k}(T) +(-1)^{|T|} s_{(n-1,1)}$ is supported on the set 
 $T_{2}(n)=\{h_\lambda: \lambda=(n-r, 1^r), r\ge 2\}.$
 \end{enumerate}
 \end{thm}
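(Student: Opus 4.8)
The plan is to leverage Theorem~\ref{RankSelectedChains}, which already writes $F_{n}(T)$ as an integer combination of tensor-power characteristics, read everything off from the hook-expansion of those tensor powers obtained inside the proof of Theorem~\ref{StirlingHom}, and pin down one remaining scalar by a Hopf-trace computation at a fixed-point-free permutation.

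\textbf{Part (1).} By Theorem~\ref{RankSelectedChains} we may write $F_{n}(T)=\sum_{j=1}^{m}d_{j}\,(s_{(n-1,1)})^{*j}$ with $d_{j}\in\mathbb{Z}$ and $m=\max T$, where $(s_{(n-1,1)})^{*j}=\mathrm{ch}\bigl(S_{(n-1,1)}^{\otimes j}\bigr)=(h_{1}h_{n-1}-h_{n})^{*j}$. The identity derived in the proof of Theorem~\ref{StirlingHom} expands each $(h_{1}h_{n-1}-h_{n})^{*j}$ as an \emph{integer} combination of the hook products $h_{1}^{i}h_{n-i}=h_{(n-i,1^{i})}$, $0\le i\le\min(n,j)$; substituting gives part~(1), since these hook products are distinct members of the $h$-basis and hence linearly independent.

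\textbf{Reduction of part (2).} In that expansion the coefficient of $h_{n}$ (the $i=0$ term) in $(s_{(n-1,1)})^{*j}$ equals $\sum_{r=0}^{j}(-1)^{r}\binom{j}{r}S(j-r,0)=(-1)^{j}$, using $S(m,0)=[m=0]$; the coefficient of $h_{1}h_{n-1}$ (the $i=1$ term) equals $\sum_{r=0}^{j-1}(-1)^{r}\binom{j}{r}S(j-r,1)=\sum_{r=0}^{j-1}(-1)^{r}\binom{j}{r}=-(-1)^{j}$, using $S(m,1)=1$ for $m\ge 1$. Since $s_{(n-1,1)}=h_{1}h_{n-1}-h_{n}$, this says $(s_{(n-1,1)})^{*j}\equiv(-1)^{j+1}s_{(n-1,1)}$ modulo the $\mathbb{Z}$-span of $T_{2}(n)=\{h_{(n-r,1^{r})}:r\ge 2\}$, for every $j\ge 1$. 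Summing over $j$ yields $F_{n}(T)\equiv-c\,s_{(n-1,1)}$ modulo $\langle T_{2}(n)\rangle$, where $c:=\sum_{j=1}^{m}(-1)^{j}d_{j}$. Hence part~(2) reduces to the single assertion $c=(-1)^{|T|}$: given this, $F_{n}(T)+(-1)^{|T|}s_{(n-1,1)}\equiv 0$ modulo $\langle T_{2}(n)\rangle$, and by part~(1) it is therefore an integer combination of $T_{2}(n)$.

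\textbf{Computing $c$.} The scalar $c=\sum_{j}(-1)^{j}d_{j}$ is the value, at any fixed-point-free $g\in S_{n}$ (one exists since $n\ge 2$), of the virtual character of $\tilde{H}(A^{*}_{n,k}(T))=\sum_{j}d_{j}S_{(n-1,1)}^{\otimes j}$: indeed $\chi_{S_{(n-1,1)}}(g)=\mathrm{fix}(g)-1=-1$, so $\chi_{S_{(n-1,1)}^{\otimes j}}(g)=(-1)^{j}$. On the other hand $A^{*}_{n,k}(T)$ is a rank-selection of the dual CL-shellable poset $A^{*}_{n,k}$, hence homotopy Cohen--Macaulay (Theorem~\ref{BjCLshellable}), so its reduced homology is concentrated in degree $|T|-1$; the Hopf trace formula then gives $(-1)^{|T|-1}c$ equal to the reduced Euler characteristic of the order complex of the fixed subposet $A^{*}_{n,k}(T)^{g}$. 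By Proposition~\ref{BjStproof}, $A^{*}_{n,k}(T)^{g}=A^{*}_{\mathrm{fix}(g),k}(T)=A^{*}_{0,k}(T)$, the rank-selection to $T\subseteq[1,k]$ of subword order on the empty alphabet; the empty alphabet has no words of positive length, so the open interval $(\hat{0},\hat{1})$ is empty, with reduced Euler characteristic $\mu_{A^{*}_{0,k}(T)}(\hat{0},\hat{1})=-1$. Therefore $(-1)^{|T|-1}c=-1$, i.e.\ $c=(-1)^{|T|}$, completing the proof.

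\textbf{Main obstacle.} The one genuinely substantive step is the last one: identifying the $g$-fixed subposet with rank-selected subword order on the empty alphabet, and tracking the signs in the Hopf trace formula when the homology sits in the single degree $|T|-1$. Everything else is bookkeeping with the hook-expansion of $(s_{(n-1,1)})^{*j}$ already established in Theorem~\ref{StirlingHom}. One small caveat: the term-by-term congruence $(s_{(n-1,1)})^{*j}\equiv(-1)^{j+1}s_{(n-1,1)}$ is used only for $j\ge 1$, which is legitimate because Theorem~\ref{RankSelectedChains} guarantees that for nonempty $T$ no zeroth tensor power (no stray copy of $S_{(n)}$, equivalently of $h_{n}$) appears in $F_{n}(T)$.
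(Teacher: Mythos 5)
Your argument is correct, but the decisive step is genuinely different from the paper's. The paper first proves Theorem~\ref{Chainhbasis} --- every chain module $\alpha_n(S)$ with $S\neq\emptyset$ is $h$-positive on hooks, with no $h_n$ term and with $h_1h_{n-1}$ appearing with coefficient exactly $1$ --- and then reads off the coefficients of $h_n$ and $h_1h_{n-1}$ in $F_n(T)$ directly from Stanley's inclusion--exclusion \eqref{RPSbeta}: they are $(-1)^{|T|}$ and $(-1)^{|T|-1}$, so adding $(-1)^{|T|}s_{(n-1,1)}=(-1)^{|T|}(h_1h_{n-1}-h_n)$ cancels both. You instead start from the tensor-power decomposition of Theorem~\ref{RankSelectedChains}, use only the $d=0,1$ coefficients of the hook expansion of $s_{(n-1,1)}^{*j}$ (as in Theorems~\ref{StirlingHom} and~\ref{ReflRepMain}) to get $s_{(n-1,1)}^{*j}\equiv(-1)^{j+1}s_{(n-1,1)}$ modulo the span of $T_2(n)$, and then pin down the residual multiple of $s_{(n-1,1)}$ by a character evaluation at a fixed-point-free permutation: Cohen--Macaulayness concentrates homology in degree $|T|-1$, the fixed subposet is empty, and the Hopf trace formula yields $\sum_j(-1)^jd_j=(-1)^{|T|}$. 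This is, in effect, the criterion of Lemma~\ref{RefRephPos} made uniform: you prove the alternating-sum identity for \emph{every} nonempty $T$ in one stroke, whereas the paper verifies it only case by case (via binomial identities such as \eqref{AnotherBinomialIdentity!}) in Theorem~\ref{AlmosthpositiveInstances} and handles general $T$ by the chain-module route; your trace computation would subsume those sign identities, though not, of course, the nonnegativity assertions there. What the paper's route buys in exchange is the stronger intermediate result, the $h$-positivity of the chain modules, which your argument bypasses. One shared caveat: exactly as in the paper's own proof, your part~(1) produces a nonzero coefficient $(-1)^{|T|}$ of $h_n=h_{(n)}$, so the support is the full set of hooks $\{h_{(n-r,1^r)}:r\ge0\}$ rather than $T_1(n)$ as literally stated; this is a feature of the statement rather than of your argument, and it is precisely the term that part~(2) absorbs.
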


When $F_{n,k}(T)+(-1)^{|T|} s_{(n-1,1)}$ is in fact a nonnegative integer combination of $T_{2}(n)=\{h_\lambda: \lambda=(n-r, 1^r), r\ge 2\},$ 
we may view $F_{n,k}(T)$ as being almost a permutation module, hence the title of this section.  First we prove a stronger result for the action on the chains. 

\begin{thm}\label{Chainhbasis}  Let $S\subseteq[1,k].$ If $|S|\ge 1,$  the $S_n$-module induced by the action of $S_n$ on the space of chains $\alpha_n(S)$ has $h$-positive Frobenius characteristic supported on the set $T_{1}(n)=\{h_\lambda: \lambda=(n-r, 1^r), r\ge 1\}.$ %
 Furthermore, $h_1 h_{n-1}$ always appears with coefficient 1 in the $h$-expansion of $\alpha_n(S)$.
 \end{thm}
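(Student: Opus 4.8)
The plan is to read off everything from the decomposition of $\alpha_n(S)$ as a permutation module into transitive pieces, using the stabilisers of the maximal chains.

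First I would describe the maximal chains explicitly. Write $S=\{s_1<\cdots<s_p\}$ with $p=|S|\ge 1$ and $m=s_p=\max S$. A maximal chain of $A^*_{n,k}(S)$ is a tuple $c=(w_1<\cdots<w_p)$ with $|w_r|=s_r$ and $w_{r-1}$ a subword of $w_r$; since $s_1=\min S$ and $m\le k$, any chain can be extended downward to rank $s_1$ and upward to rank $m$ by deleting, respectively inserting, letters, so the maximal chains are exactly those meeting every rank in $S$. The symmetric group $S_n$ permutes these chains through its action on the alphabet.

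The key step is to compute the stabiliser of such a chain $c$. Since $w_{r-1}$ is a subword of $w_r$, the set of distinct letters occurring in $w_{r-1}$ is contained in the set occurring in $w_r$; hence $g\in S_n$ fixes $c$ (that is, fixes each word $w_r$) if and only if $g$ fixes pointwise the set $L(c)$ of distinct letters occurring in the \emph{top} word $w_p$. If $|L(c)|=d$, this stabiliser is a Young subgroup conjugate to $S_1^d\times S_{n-d}$, so the $S_n$-orbit of $c$ is isomorphic as an $S_n$-module to the permutation module $V_{d,n}$ of Lemma~\ref{Stirling}, with Frobenius characteristic $h_1^d h_{n-d}=h_{(n-d,1^d)}$. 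Because $w_p$ has length $m\ge 1$ it uses at least one letter, so always $1\le d\le\min(n,m)$; in particular the value $d=0$ never occurs, which is precisely what keeps $h_n$ out of the support. Grouping the orbits by the number $d$ of distinct letters used by the top word then yields $\alpha_n(S)=\bigoplus_{d\ge 1} c_d\,V_{d,n}$, where $c_d\ge 0$ counts the orbits of maximal chains whose top word uses exactly $d$ distinct letters; hence $\mathrm{ch}\,\alpha_n(S)=\sum_{d\ge 1} c_d\,h_{(n-d,1^d)}$ is manifestly $h$-positive and supported on $T_{1}(n)$.

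For the final claim, a maximal chain whose top word $w_p$ uses a single letter $a$ must have $w_r=a^{s_r}$ for every $r$ (each $w_r$ being a subword of $a^m$), so there is exactly one such chain for each $a\in A$ and these form a single $S_n$-orbit; thus $c_1=1$, and since the $h_{(n-d,1^d)}$ are linearly independent, $h_1h_{n-1}$ appears with coefficient exactly $1$. The argument is short; the only real obstacle is getting the stabiliser right — noting that the nesting $w_1\le\cdots\le w_p$ reduces it to the pointwise stabiliser of the letters of the top word alone — and one can cross-check the resulting dimension against the recurrence \eqref{ChainRec} via Theorem~\ref{Bjgf}(2).
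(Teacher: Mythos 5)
Your argument is correct, and it takes a genuinely different route from the paper. The paper proves Theorem~\ref{Chainhbasis} by induction on $|S|$: the single-rank case is Lemma~\ref{Stirling}, and the inductive step uses the recurrence \eqref{ChainRec} together with the internal-product identity \eqref{IndResIterate} for $(h_1^ah_b)*s_{(n-1,1)}$ to show that the new terms land in $T_2(n)$, so the coefficient of $h_1h_{n-1}$ is inherited from $\alpha_n(S\backslash\{s_p\})$. You instead decompose the chain module directly into $S_n$-orbits: since $g$ preserves rank, it fixes a maximal chain iff it fixes each word, hence iff it fixes pointwise the letters of the top word, so each orbit is $1\uparrow_{S_1^d\times S_{n-d}}^{S_n}$ with $d\ge 1$ the number of distinct letters in the top word, and the orbit of the constant-letter chains accounts for the single copy of $h_1h_{n-1}$. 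This is essentially the orbit--stabiliser argument the paper applies to single ranks (Lemma~\ref{Stirling}) and to Whitney homology (Proposition~\ref{WhPermModule}), pushed through for arbitrary rank sets; it buys an explicit combinatorial interpretation of every $h$-coefficient (the number of chain orbits whose top word uses exactly $d$ distinct letters), whereas the paper's inductive route produces the recursive symmetric-function formulas that are reused in the later homology arguments. Two small points to tidy: justify that maximal chains of $A^*_{n,k}(S)$ meet \emph{every} rank of $S$ by also interpolating at interior ranks (given $w<w'$ of lengths $s_{j-1}<s_{j+1}$, delete letters of $w'$ outside an embedding of $w$ to reach length $s_j$), not just by extending at the top and bottom; and note that for $n=2$ the functions $h_1^dh_{n-d}$ for different $d$ can coincide, so the ``coefficient $1$'' statement should be read at the level of orbit types --- a degenerate-case convention the paper's own proof shares, so it is not a defect of your approach relative to the original.
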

 
 \begin{proof}    Recall that $*$ denotes the inner tensor product. 
Note that the case of a single rank has already been established in Lemma~\ref{Stirling}: if $S=\{s_1\}$, then 
 $\mathrm{ch} \,\alpha_n(S)= (h_1 h_{n-1})^{*{s_1}}$ since the dimension of the module is $n^{s_1}$, and the coefficient of $h_1 h_{n-1}$ in the $h$-expansion is the Stirling number $S(s_1,1)=1.$
 
 We proceed by induction, using the decomposition \eqref{ChainRec} of Theorem~\ref{RankSelectedChains}.  We have, with $|S|\ge 2,$  
 \begin{equation*}\alpha_n(S)\!=\alpha_n(S\backslash\{s_p\})\otimes  \sum_{i=0}^{s_p-s_{p-1}} {s_p\choose i} S_{(n-1,1)}^{\otimes i}
 =\alpha_n(S\backslash\{s_p\})\otimes \left(\sum_{i=1}^{s_p-s_{p-1}} {s_p\choose i} S_{(n-1,1)}^{\otimes i} \oplus S_{(n)}\right).   \end{equation*}

 Translating Remark~\ref{IndRes}  into Frobenius characteristics gives the well-known symmetric function formula $(h_1 h_{n-1})*f= 
 h_1 \tfrac{\partial}{\partial p_1} f$ for any symmetric function $f$ of homogeneous degree $n$
 \cite[Exercise 7.81, p. 477]{St4EC2}, \cite[Example 3 (c), p. 75]{M}. It is easy to check that 
 for $a\ge 1, a+b=n,$ 
 \begin{equation} \label{IndResIterate} 
 (h_1^ah_b)*s_{(n-1,1)} =\begin{cases}(a-1) h_1^{a} h_b+h_1^{a+1}h_{b-1},\ a\ge 2;\\
h_1^{a+1}h_{b-1},\ a=1.
\end{cases}
 \end{equation}
 
 Note the factor $h_1^2$ when $a\ne 0.$  
 Let $T_{ 2}(n)=\{h_\lambda: \lambda=(n-r, 1^r), r\ge 2\}.$ Iterating ~\eqref{IndResIterate} gives the fact that  when $\mu\in T_{ 1}(n-1),$ 
$ (h_1h_\mu)*s_{(n-1,1)}^{* k}$
 is a nonnegative integer combination of terms in $T_{ 2}(n). $
 
 Assume $S=\{1\le s_1<\ldots<s_p\le k\},$ and  $|S|\ge 2.$  
 In terms of symmetric functions, the decomposition \eqref{ChainRec} of Theorem~\ref{RankSelectedChains} becomes  
 \begin{multline}\mathrm{ch}\, \alpha_n(S)=\mathrm{ch}\, \alpha_n(S\backslash\{s_p\}) 
* (h_n+\sum_{i=1}^{s_p-s_{p-1}} {s_p\choose i} s_{(n-1,1)}^{* i})\\
=\mathrm{ch}\, \alpha_n(S\backslash\{s_p\}) 
+ \mathrm{ch}\, \alpha_n(S\backslash\{s_p\}) 
* \left(\sum_{i=1}^{s_p-s_{p-1}} {s_p\choose i} s_{(n-1,1)}^{* i}\right).\end{multline}
 Suppose now that the first term above, $\mathrm{ch}\,\alpha_n(S\backslash\{s_p\}),$ is a nonnegative integer combination of terms in $T_{ 1}(n),$  in which $h_1h_{n-1}$ appears with coefficient 1. By ~\eqref{IndResIterate}, the $h$-expansion of the second term contains only terms in $T_2(n);$ the crucial point here is that, since $p\ge 2,$ the $h$-expansion of $\mathrm{ch}\,\alpha_n(S\backslash\{s_p\})$ does not contain the function  $h_n.$ It follows that 
 $\mathrm{ch}\,\alpha_n(S)$ must be a nonnegative integer combination of terms in $T_{ 1}(n),$ and the coefficient of $h_1h_{n-1}$ is inherited from $\mathrm{ch}\,\alpha_n(S\backslash\{s_p\}).$  It is therefore equal to 1.  This completes the induction.
 \end{proof}
 
 \noindent
 \textbf{Proof of Theorem~\ref{Homologyhbasis}:}
 \begin{proof}  Using  Stanley's equation for rank-selected homology, Equation~\eqref{RPSbeta} in Theorem~\ref{RankSelectedChains}, we have 
  \[F_{n,k}(T)=\sum_{S\subseteq T} (-1)^{|T|-|S|} \mathrm{ch}\, \alpha_n(S).\]
 
 From Theorem~\ref{Chainhbasis}, this has an expansion in the $h$-basis in which $h_n$ appears only in $\alpha_n(\emptyset)$ with coefficient 1, and $h_1 h_{n-1}$ appears in $\alpha_n(S)$ with coefficient 1 for all nonempty $S.$ Hence the coefficient of $h_n$ in the right-hand side above is $(-1)^{|T|},$ while the coefficient of $h_1 h_{n-1}$ is 
 \[\sum_{S\subseteq T, S\ne \emptyset} (-1)^{|T|-|S|} =
(-1)^{|T|}[ \sum_{i=0}^{|T|} \binom{|T|}{i} (-1)-1]  =(-1)^{|T|-1}.\]
But $(-1)^{|T|}s_{(n-1,1)}=(-1)^{|T|} h_1 h_{n-1} -(-1)^{|T|} h_n,$ and the conclusion follows. \end{proof} 

The preceding theorem motivates Conjecture~\ref{Conj:almost-h-pos} in the Introduction.  We will show that Conjecture~\ref{Conj:almost-h-pos} is true in the following cases of rank-selection:

 \begin{thm}\label{AlmosthpositiveInstances} For any nonempty rank set $T\subseteq[1,k],$ consider the  module 
  $V_T=\tilde{H}_{k-2}(A^*_{n,k}(T))+ (-1)^{|T|} S_{(n-1,1)}.$  In each of the following cases, $V_T$  is a nonnegative combination of transitive permutation modules with orbit stabilisers 
of the form $S_1^d\times S_{n-d}, d\ge 2.$  Equivalently, the symmetric function $F_{n,k}(T) +(-1)^{|T|} s_{(n-1,1)}$ is supported on the set 
 $T_{2}(n)=\{h_\lambda: \lambda=(n-r, 1^r), r\ge 2\}$ with nonnegative integer coefficients in each of the following cases:
\begin{enumerate}
\item $T=[r,k], k\ge r\ge 1.$ 
\item $T=[1,k]\backslash\{r\}, k\ge r\ge 1.$ 
\item $T=\{1\le s_1<s_2\le k\}.$
\end{enumerate}
  \end{thm}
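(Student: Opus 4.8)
The plan is to prove the three cases by reducing each to an explicit formula for the homology (already obtained earlier in the paper) and then expanding the relevant quantity in nonnegative powers of $(n-1)$, using the key translation between powers of $(n-1)$ and the symmetric functions $h_{(n-r,1^r)}$. The bridge is the following observation, which I would state first as a lemma: if the Betti number of a homology module $\tilde H$ equals $\sum_{r\ge 1} d_r (n-1)^r$ with $d_r\ge 0$ for $r\ge 2$, and the coefficient of $(n-1)^1$ equals $d_1$, then (by Proposition~\ref{BjStproof}) $\tilde H=\bigoplus_{r\ge 1} d_r S_{(n-1,1)}^{\otimes r}$, and one checks via \eqref{IndResIterate} (or directly, since $\mathrm{ch}\, S_{(n-1,1)}^{\otimes r}$ expands in the $h_{(n-j,1^j)}$ with the coefficient of $h_{1}h_{n-1}$ equal to $1$ for every $r\ge1$ and the coefficient of $h_n$ equal to $(-1)^r$ — cf. Theorem~\ref{StirlingHom}) that adding $(-1)^{|T|}S_{(n-1,1)}$ exactly cancels the $h_1h_{n-1}$ and $h_n$ discrepancies, leaving something supported on $T_2(n)$ with nonnegative coefficients precisely when each $S_{(n-1,1)}^{\otimes r}$-summand with $r\ge 2$ contributes nonnegatively. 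In fact the cleanest route is: it suffices to show $V_T$ is a nonnegative integer combination of $S_{(n-1,1)}^{\otimes r}$, $r\ge 2$; then $h$-positivity on $T_2(n)$ follows from \eqref{IndResIterate}, and this is essentially Lemma~\ref{RefRephPos} referenced in the introduction.

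For case (1), $T=[r,k]$, Theorem~\ref{Bordeaux1991mai} already gives $\tilde H_{k-r}(A^*_{n,k}([r,k]))=\bigoplus_{i=1+k-r}^{k} \binom{k}{i}\binom{i-1}{k-r} S_{(n-1,1)}^{\otimes i}$, a \emph{nonnegative} combination of tensor powers with lowest power $i=1+k-r\ge 2$ as soon as $r\le k-1$ (the case $r=k$ being the single-rank case, handled separately — there the homology is $\bigoplus_{i\ge1}\binom{k}{i}(n-1)^i$ in Betti numbers, i.e. $n^k-1$, and $V_T$ is genuinely $h$-positive on $T_2(n)$ after subtracting one copy of $S_{(n-1,1)}$). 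So here $V_T$ itself is already a nonnegative combination of $S_{(n-1,1)}^{\otimes i}$ with $i\ge2$ minus (or plus) exactly the one copy that the parity $|T|=k-r+1$ prescribes; one checks the bookkeeping that the $i=1+k-r$ coefficient after adjustment remains $\binom{k}{1+k-r}\binom{k-r}{k-r}=\binom{k}{r-1}\ge 0$ and all others are unchanged. For case (2), $T=[1,k]\backslash\{r\}$, Theorem~\ref{rank-deletion} gives $\tilde H=[\binom{k}{r}-1]S_{(n-1,1)}^{\otimes k}\oplus \binom{k}{r}S_{(n-1,1)}^{\otimes k-1}$, already nonnegative in tensor powers $k-1,k\ge 2$ (here $|T|=k-1$, and one verifies $[\binom kr -1]+(-1)^{k-1}\cdot(\text{contribution})$ stays $\ge0$ — note $\binom kr\ge 1$ always, and the $\pm S_{(n-1,1)}$ adjustment only touches the $h_1h_{n-1},h_n$ coefficients, which by the lemma are exactly absorbed). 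For case (3), $T=\{s_1<s_2\}$, Theorem~\ref{TwoRanks} already did the hard combinatorial work: it exhibits $\tilde H=\sum_{v=1}^{s_1}(c_v-\binom{s_1}{v})S_{(n-1,1)}^{\otimes v}\oplus\sum_{v=s_1+1}^{s_2}c_v S_{(n-1,1)}^{\otimes v}$ with all coefficients $\ge0$; since $|T|=2$, $V_T=\tilde H + S_{(n-1,1)}$, so I would check that $c_1-\binom{s_1}{1}+1$ — the coefficient of $S_{(n-1,1)}^{\otimes 1}$ after adjustment — together with the $h_n$-coefficient, vanishes as forced by parity, leaving a genuine nonnegative $h$-combination on $T_2(n)$; in fact the coefficient of $(n-1)^1$ in $\beta_n(\{s_1<s_2\})$ was computed to be $c_1-s_1$, and one sees $c_1=\binom{s_2-1}{0}s_1=s_1$ (only the $j=1$ term survives for $v=1$), so $c_1-s_1=0$ already and $V_T$ is honestly supported on $v\ge2$ with the nonnegativity from Theorem~\ref{TwoRanks}, and Corollary~\ref{NewBinomialIdentity?}'s identity may be invoked if a cross-check with the alternating Whitney-homology expression is wanted.

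The main obstacle is purely bookkeeping: making sure the $\pm S_{(n-1,1)}$ correction interacts correctly with the parity $(-1)^{|T|}$ in each case and that subtracting a copy of $S_{(n-1,1)}$ from an $h$-positive-on-$T_1(n)$ expression really does land in $T_2(n)$ without creating a negative coefficient — this is where the identity $\mathrm{ch}\,S_{(n-1,1)}=h_1h_{n-1}-h_n$ and formula \eqref{IndResIterate} do the work, showing that the entire effect of the adjustment is to delete the unique $h_1h_{n-1}$ (coefficient $1$) and the $h_n$ (coefficient $(-1)^{|T|}$) that Theorem~\ref{Homologyhbasis} already isolated. So the structure of the argument is: (a) recall the explicit tensor-power decomposition of $\tilde H(T)$ in each of the three cases from the earlier theorems; (b) observe each is \emph{already} a nonnegative combination of tensor powers, with the lowest power being $\ge 2$ after the parity-forced correction — the content of (a) plus the vanishing-of-the-$(n-1)^1$-coefficient checks; (c) apply Lemma~\ref{RefRephPos} / \eqref{IndResIterate} to convert nonnegativity in tensor powers $\{S_{(n-1,1)}^{\otimes r}:r\ge2\}$ into $h$-positivity on $T_2(n)$. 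No step requires new ideas beyond careful use of results already in the paper.
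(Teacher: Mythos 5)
There is a genuine gap, and it stems from a factual error about the $h$-expansion of tensor powers. You assert that the coefficient of $h_1h_{n-1}$ in $\mathrm{ch}\,S_{(n-1,1)}^{\otimes r}$ is $1$ for every $r\ge 1$; by Theorem~\ref{ReflRepMain} it is $g_n(r,1)=(-1)^{r-1}$ (and the coefficient of $h_n$ is $(-1)^r$). Consequently your ``cleanest route'' --- that it suffices to show $V_T$ is a nonnegative integer combination of $S_{(n-1,1)}^{\otimes r}$ with $r\ge 2$ --- is false: for example $S_{(n-1,1)}^{\otimes 2}\oplus S_{(n-1,1)}^{\otimes 4}=U_{n,2}\oplus U_{n,4}-2\,S_{(n-1,1)}$ has $-2h_1h_{n-1}+2h_n$ in its $h$-expansion, so it is neither supported on $T_2(n)$ nor $h$-positive. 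Lemma~\ref{RefRephPos} needs the additional hypothesis that the \emph{alternating} sum $\sum_i(-1)^{i-1}c_i$ of the tensor-power coefficients of $V_T$ vanishes, and verifying this is the actual content of the paper's proof: for $T=[r,k]$ it is the nontrivial identity $\sum_{i=1+k-r}^{k}(-1)^{i-1}\binom{k}{i}\binom{i-1}{k-r}=(-1)^{k-r}$ (equation~\eqref{AnotherBinomialIdentity!}, proved by evaluating the polynomial identity of Corollary~\ref{NewBinomialIdentity?} at $x=0$), and for $T=\{s_1<s_2\}$ it is $\sum_{v=1}^{s_2}(-1)^v c_v=0$, proved by splitting the sum, interchanging the order of summation and applying the binomial theorem. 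Your proposal never states or proves these identities; instead the case bookkeeping is incorrect: the $\pm S_{(n-1,1)}$ correction alters the coefficient of the \emph{first} tensor power, not of the lowest power $i=1+k-r$ occurring in Theorem~\ref{Bordeaux1991mai}, and in case (3), since $|T|=2$, the coefficient of $S_{(n-1,1)}^{\otimes 1}$ in $V_T$ is $c_1-s_1+1=1\neq 0$, so $V_T$ is not ``honestly supported on $v\ge 2$'' in tensor powers; the resulting $h_1h_{n-1}-h_n$ only disappears because of the full alternating-sum cancellation you omitted.

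A correct variant of your shortcut does exist, but you did not argue it: since Theorem~\ref{Homologyhbasis} already shows $\mathrm{ch}\,V_T$ is supported on $T_2(n)$, writing $V_T=\bigoplus_{i\ge 2}c_iU_{n,i}\oplus m\,S_{(n-1,1)}$ and using linear independence of the $h_\lambda$ forces $m=0$, whence nonnegativity of the $c_i$ for $i\ge 2$ (from Theorems~\ref{Bordeaux1991mai}, \ref{rank-deletion}, \ref{TwoRanks}) gives the claim without the explicit identities. As written, however, your argument rests on the false coefficient claim and a sufficiency criterion that is not valid, so the decisive step --- the vanishing of the alternating sum (or, equivalently, of the stray multiple of $S_{(n-1,1)}$) --- is missing in all three cases.
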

  
  The proof relies on the homology computations of the preceding sections, but we also need to develop additional tools.  In particular Theorem~\ref{ReflRepMain} will be crucial to  the proof.
 We begin by deriving a different expression for the Whitney homology modules of $A^*_{n,k}$, thereby obtaining a new expression for the top homology module as well.
\begin{prop}\label{LowerIntTopo} Let $\alpha$ be a nonempty word in $A^*_{n,k}.$ Then
\begin{enumerate} 
\item If $\alpha$ is not a normal word, the (order complex of the) interval $(\hat 0, \alpha)$ is contractible and hence its homology vanishes in all degrees;
\item If $\alpha$ is  a normal word, the (order complex of the) interval $(\hat 0, \alpha)$ is homotopy equivalent to a single sphere in the top dimension,  and the stabiliser subgroup of $\alpha$ acts trivially on the homology.
\end{enumerate}
\end{prop}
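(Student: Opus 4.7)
The plan is to combine Farmer's Möbius computation with Björner's CL\nobreakdash-shellability to pin down the homotopy type, and then to observe that the stabiliser acts \emph{pointwise} trivially on the open interval.

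First, I will appeal to Theorem~\ref{BjCLshellable}: every open interval $(\hat 0,\alpha)$ in subword order is (dual) CL-shellable, hence its order complex is homotopy equivalent to a wedge of spheres in the top dimension $|\alpha|-2$. The number of spheres equals $|\mu(\hat 0,\alpha)|$, which by Theorem~\ref{Farmer} is $0$ if $\alpha$ is not normal and $1$ if $\alpha$ is normal. A wedge of zero spheres is a point, giving contractibility in case (1); a wedge of one sphere gives a single top-dimensional sphere in case (2). This immediately yields the topological parts of both statements.

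For the stabiliser action in case (2), the key observation is the following combinatorial fact: if $\beta\le \alpha$, then every letter appearing in $\beta$ already appears in $\alpha$, since $\beta$ is obtained from $\alpha$ by deletion. Therefore if $g\in S_n$ fixes $\alpha$ as a word, then $g$ must fix every letter of $A$ that occurs in $\alpha$ (permuting a letter of $\alpha$ would change $\alpha$), and in particular $g$ fixes every letter occurring in any subword $\beta$ of $\alpha$. It follows that $\mathrm{stab}(\alpha)$ acts as the identity on the poset $[\hat 0,\alpha]$, and therefore acts trivially on its order complex and on all of its homology.

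I do not anticipate any substantial obstacle. The only subtle point is invoking the correct form of the shellability statement, namely that CL-shellability together with a vanishing Möbius number forces contractibility rather than merely vanishing of reduced homology; this follows from the wedge-of-spheres conclusion of Björner--Wachs for shellable complexes, where an empty wedge is interpreted as a point. The triviality of the stabiliser action is immediate from the structure of subword order and needs no sign computation.
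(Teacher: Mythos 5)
Your proposal is correct and follows essentially the same route as the paper: the topological parts are deduced from Björner's dual CL-shellability (Theorem~\ref{BjCLshellable}) together with Farmer's Möbius number formula (Theorem~\ref{Farmer}), and the trivial stabiliser action comes from noting that any $g\in\mathrm{stab}(\alpha)$ must fix each letter occurring in $\alpha$, hence fixes the interval $(\hat 0,\alpha)$ pointwise. (The paper's printed proof has an evident typo, writing ``If $\alpha$ is not normal'' where it means ``If $\alpha$ is normal'' before describing the stabiliser $S_1^m\times S_{n-m}$; your version states this correctly.)
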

\begin{proof} The topological conclusions in both parts are immediate from the formula for the M\"obius number in Theorem~\ref{Farmer} and  Bj\"orner's dual CL-shellability result of Theorem~\ref{BjCLshellable}. 

If $\alpha$ is normal, then it consists of some $m\le n$ distinct letters $\{x_1,\ldots, x_m\}$, and consecutive letters are distinct.  The stabiliser is the subgroup which fixes each $x_i$ pointwise, and permutes the remaining $n-m$ letters arbitrarily.  It is thus  a product of $m$ copies of the trivial group $S_1$ and the group $S_{n-m}.$  Clearly  this subgroup fixes every element in the interval $(\hat 0, \alpha)$ pointwise, and hence the action on the unique nonvanishing homology is trivial.
\end{proof}

   Let $S^*(j,d)$ denote the number of set partitions of $[j]$ into $d$ blocks, with the property that no block  contains consecutive integers (a  reduced Stirling number).  The surjection in the following lemma will be needed in what follows.
  \begin{lem}\label{surj-normal-to-ptns}  There is a surjection $\psi$ from the set of words of length $j$ in an alphabet of size $n$ to the set partitions into $d$ blocks of $[j],$ 
  where $d$ is the number of distinct letters in $\alpha.$
  This surjection maps normal words  onto  set partitions with the property that no two consecutive integers are in the same block. 
  In particular, the number $n(n-1)^{j-1}$ of normal words of length $j\ge 2$ on an alphabet of size $n$ is also equal to
  \[ \sum_{d= 1}^{\min(n,j)} \dfrac{n!}{(n-d)!} S^*(j,d).\]
  \end{lem}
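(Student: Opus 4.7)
The plan is to define $\psi$ by recording the pattern of letter-equalities. Given a word $\alpha=a_1a_2\cdots a_j$ with exactly $d$ distinct letters appearing, declare positions $i$ and $i'$ to lie in the same block if and only if $a_i=a_{i'}$. This partitions $[j]$ into exactly $d$ nonempty blocks, giving the map $\psi$.

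Next I would verify surjectivity and compute fibers. Given any set partition of $[j]$ into $d$ blocks, choose an injection from the set of blocks into the alphabet $A$ and assign to each position the letter attached to its block. Since $|A|=n\ge d$, the number of such injections is $n(n-1)\cdots(n-d+1)=n!/(n-d)!$, and each produces a word lying in the preimage. Conversely every word in the preimage arises in this way, so each fiber has size exactly $n!/(n-d)!$. This also shows $\psi$ is surjective onto the set partitions of $[j]$ into $d$ blocks for each $d$ with $1\le d\le \min(n,j)$.

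For the \emph{normal} words claim, observe that $\alpha$ is normal iff $a_i\ne a_{i+1}$ for every $i$, which by the definition of $\psi$ is exactly the condition that no two consecutive integers lie in the same block of $\psi(\alpha)$. Thus $\psi$ restricts to a surjection from normal words of length $j$ onto partitions of $[j]$ into $d$ blocks with no block containing two consecutive integers, still with fibers of size $n!/(n-d)!$. Summing over $d$ gives the first equality in the displayed formula.

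For the second equality, a normal word of length $j$ is specified by choosing $a_1$ freely ($n$ choices) and then each $a_i$ with $i\ge 2$ from the $n-1$ letters distinct from $a_{i-1}$, yielding $n(n-1)^{j-1}$ normal words. Equating the two counts gives the identity. There is no serious obstacle here; the only point requiring care is being explicit about the range of $d$ (namely $1\le d\le\min(n,j)$, since a set partition of $[j]$ cannot have more than $j$ blocks and an injection into $A$ forces $d\le n$).
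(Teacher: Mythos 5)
Your proof is correct and takes essentially the same approach as the paper: the same map $\psi$ (positions in a block iff equal letters), the same fiber computation giving $n!/(n-d)!$, and the same observation that normality translates to no block containing consecutive integers. Your write-up is actually more explicit than the paper's terse proof-by-example, and you correctly flag the range $1\le d\le\min(n,j)$.
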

  \begin{proof} %
  The idea of the proof is similar to that of Lemma~\ref{Stirling}. A word $\alpha$ of length $j$ with $d$ distinct letters maps to a set partition $\psi(\alpha)$ of $[j]$ with 
  $d$ blocks, where positions corresponding to integers in the same block have equal letters in $\alpha.$
  For instance, the word 
  $\alpha= abbcbca,$ of length 7 with 3 distinct letters. Then the set partition of $[7]$ associated to $\alpha$ is $\psi(\alpha)=17-235-46.$ 
  
  Now suppose $\alpha$ is normal.  Then no two consecutive positions have equal letters, which is precisely the condition that no block of 
  the set partition $\psi(\alpha)$ contains consecutive integers.
  
  The last statement is verified by observing that a normal word in the pre-image of every set partition of $[j]$ with $d$ blocks contains 
  $d$ distinct letters chosen out of $n$, which can be permuted amongst themselves in $d$ ways.
  \end{proof}
  
  Recall that the ordinary Stirling numbers of the second kind satisfy the recurrence $S(n+1,d)=S(n,d-1)+d S(n,d)$ with initial conditions $S(0,0)=1$ and $S(n,0)=0=S(0,d)$ if $n,d>0.$ It is easy to verify similarly that the reduced Stirling numbers $S^*(j,d)$ satisfy the recurrence 
  $ S^*(n+1,d)=S^*(n,d-1)+(d-1)S^*(n,d),$ by examining the possibilities for inserting $(n+1)$ into a partition of $[n]$ into $d$ blocks. A comparison of the recurrences immediately shows that in fact 
  \[S^*(n+1,d)=S(n,d-1)\quad \text{for all } n\ge 0,d\ge 1.\] 
  See  \cite{Munagi} for generalisations of these numbers. %
  Recall that in Theorem~\ref{WHsubword}, the $j$th Whitney homology of $A^*_{n,k}$ $j\ge 2,$ was determined as a sum of two consecutive tensor powers of $S_{(n-1,1)}.$   From   Lemma~\ref{surj-normal-to-ptns}  and Proposition~\ref{LowerIntTopo} we now have the following surprising result.
  
  \begin{prop}\label{WhPermModule} Each Whitney homology module  of subword order, and hence the sum of two consecutive tensor powers of the reflection representation, has $h$-positive Frobenius characteristic, and in particular it is a permutation module.   We have  $\mathrm{ch\ }W\!H_0=h_n, \mathrm{ ch\ }W\!H_1=h_1 h_{n-1},$ and  for $k\ge j\ge 2,$ the $j$th Whitney homology of $A^*_{n,k}$ has Frobenius characteristic
  \begin{equation}\label{StirlingWh} (h_1h_{n-1})*s_{(n-1,1)}^{*(j-1)}=\sum_{d= 2}^j S(j-1,d-1)\, h_1^d h_{n-d}=\sum_{d=2}^j S^*(j,d)\,h_1^d h_{n-d}= h_1(h_1h_{n-2})^{*j-1},
  \end{equation}
a permutation module with orbits whose stabilisers are Young subgroups indexed by partitions of the form $(n-d,1^d), d\ge 0.$  

  \end{prop}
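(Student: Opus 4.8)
\textbf{Proof proposal for Proposition~\ref{WhPermModule}.}

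The plan is to string together three ingredients, each already available: the equivariant Whitney homology decomposition~\eqref{WHBjdef}, the local topology of lower intervals (Proposition~\ref{LowerIntTopo}), and the reduced-Stirling-number surjection of the preceding Lemma. First I would recall from~\eqref{WHBjdef} that $W\!H_j(A^*_{n,k})\simeq\bigoplus_{|\alpha|=j}\tilde H_{j-2}(\hat 0,\alpha)$ as $S_n$-modules. By Proposition~\ref{LowerIntTopo}, the only words $\alpha$ contributing are the normal words of length $j$, and for each such $\alpha$ the interval $(\hat 0,\alpha)$ has the homology of a single top-dimensional sphere on which the stabiliser of $\alpha$ acts \emph{trivially}. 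Hence $W\!H_j$ is, $S_n$-equivariantly, the permutation module on the set of normal words of length $j$: if $\alpha$ has $d$ distinct letters, its stabiliser is conjugate to $S_1^d\times S_{n-d}$, contributing a copy of the transitive permutation module with characteristic $h_1^d h_{n-d}$. Grouping the normal words of length $j$ by the number $d$ of distinct letters and invoking the count $\tfrac{n!}{(n-d)!}S^*(j,d)$ of normal words with $d$ distinct letters (equivalently, $S^*(j,d)$ orbits, one per reduced set partition) gives
\[
\mathrm{ch}\,W\!H_j(A^*_{n,k})=\sum_{d=2}^{j} S^*(j,d)\,h_1^d h_{n-d},\qquad j\ge 2,
\]
with the lower index $d=2$ coming from the fact that a normal word of length $j\ge 2$ uses at least two letters. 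The cases $j=0,1$ are the trivial and natural modules, with characteristics $h_n$ and $h_1h_{n-1}$ respectively, as stated.

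Next I would identify this with the other expressions in~\eqref{StirlingWh}. The identity $S^*(j,d)=S(j-1,d-1)$ was established just above the proposition via matching recurrences, so $\sum_{d\ge 2}S^*(j,d)h_1^d h_{n-d}=\sum_{d\ge 2}S(j-1,d-1)h_1^d h_{n-d}$; re-indexing $d\mapsto d+1$ and comparing with Lemma~\ref{Stirling} (in the form $(h_1h_{n-1})^{*m}=\sum_i S(m,i)h_1^i h_{n-i}$) shows this equals $(h_1h_{n-1})*s_{(n-1,1)}^{*(j-1)}$ after peeling off one factor of the natural module and using $(h_1h_{n-1})*f=h_1\tfrac{\partial}{\partial p_1}f$ as in the proof of Theorem~\ref{Chainhbasis}; alternatively, one recognises $W\!H_j$ directly from Theorem~\ref{WHsubword} as $S_{(n-1,1)}^{\otimes j}\oplus S_{(n-1,1)}^{\otimes(j-1)}$, whose characteristic is $(s_{(n-1,1)}+h_n)*s_{(n-1,1)}^{*(j-1)}=(h_1h_{n-1})*s_{(n-1,1)}^{*(j-1)}$. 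For the last equality $h_1(h_1h_{n-2})^{*(j-1)}$, note that $h_1 h_{n-2}$ is the characteristic of the natural $S_{n-1}$-module, so by Lemma~\ref{Stirling} applied in $S_{n-1}$, $(h_1h_{n-2})^{*(j-1)}=\sum_d S(j-1,d)h_1^d h_{n-1-d}$; multiplying by $h_1$ and shifting $d\mapsto d-1$ recovers $\sum_d S(j-1,d-1)h_1^d h_{n-d}$, matching the middle expression. The statement about tensor powers of the reflection representation being permutation modules then follows because $W\!H_j=S_{(n-1,1)}^{\otimes j}\oplus S_{(n-1,1)}^{\otimes(j-1)}$ by Theorem~\ref{WHsubword}, and we have just exhibited this sum as an honest $h$-positive permutation module with the stated Young-subgroup stabilisers.

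The only real point requiring care — and the step I expect to be the crux — is the \emph{equivariance} and \emph{triviality of the local action}: one must be sure that the stabiliser of a normal word $\alpha$ acts trivially on $\tilde H_{j-2}(\hat 0,\alpha)$, so that the contribution really is the induced module $1\uparrow_{\mathrm{stab}(\alpha)}^{S_n}$ rather than some twist. This is exactly the content of Proposition~\ref{LowerIntTopo}(2): the stabiliser of $\alpha$ fixes each letter of $\alpha$ and hence fixes every subword of $\alpha$ pointwise, so it fixes the order complex of $(\hat 0,\alpha)$ pointwise and acts as the identity on its homology. Given that, everything else is bookkeeping with the reduced Stirling recurrence and the Frobenius dictionary, and the four displayed forms in~\eqref{StirlingWh} are simply four ways of organising the same sum.
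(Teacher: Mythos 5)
Your proposal is correct and follows essentially the same route as the paper: identify $W\!H_j$ via \eqref{WHBjdef} with the sum over normal words of length $j$ (using Proposition~\ref{LowerIntTopo} for contractibility of non-normal intervals and triviality of the stabiliser action), group the normal words into orbits counted by the reduced Stirling numbers via the preceding Lemma, and match the resulting $h$-expansion with the tensor-power description from Theorem~\ref{WHsubword} and with $h_1(h_1h_{n-2})^{*(j-1)}$ by the index shift of Lemma~\ref{Stirling}. The verification details you supply (e.g.\ $S^*(j,d)=S(j-1,d-1)$ and the two cross-checks of the middle equality) are the same bookkeeping the paper performs, so there is nothing further to add.
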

  
  \begin{proof}  From  Eqn.~\eqref{Whitney} and Theorem~\ref{WHsubword}, for $k\ge j\ge 2,$ we have 
  
  $W\!H_j(A^*_{n,k})= S_{(n-1,1)}^{\otimes j} \oplus S_{(n-1,1)}^{\otimes j-1}= (S_{(n)}\oplus S_{(n-1,1)})\otimes S_{(n-1,1)}^{\otimes j-1}.$ Now by definition we also have
  \[WH_j(A^*_{n,k})=\sum_{x\in A^*_{n,k}, |x|=j} \tilde{H}(\hat 0,x).\]
   Proposition~\ref{LowerIntTopo} says that the sum runs over only normal words $x,$ and each homology module is trivial for the stabiliser of $x.$ 
    Collecting the summands into orbits and using the surjection of Lemma~\ref{surj-normal-to-ptns}  gives Eqn.~\eqref{StirlingWh}.   The last expression is obtained by shifting the  index in the sum:
\[\sum_{d= 2}^j S(j-1,d-1)\, h_1^d h_{n-d}=h_1\sum_{d'=1}^{j-1} S(j-1,d') 
h_1^{d'} h_{(n-1)-d'},\]
and this equals $h_1(h_1h_{n-2})^{*j-1}$ by Lemma~\ref{Stirling}.
  \end{proof}
  Recall \cite{M} that the homogeneous symmetric functions $h_\lambda$ form a basis for the ring of symmetric functions.
  \begin{thm}\label{ReflRepMain} Fix $k\ge 1.$ 
  The $k$th tensor power of the reflection representation  
  $S_{(n-1,1)}^{\otimes k},$ i.e. the homology module $\tilde{H}_{k-1}(A^*_{n,k}),$ has the following property:
  $S_{(n-1,1)}^{\otimes k}\oplus (-1)^{k} S_{(n-1,1)}$ 
  is a permutation module $U_{n,k}$ whose Frobenius characteristic is $h$-positive, and  is supported on the set $\{h_\lambda: \lambda=(n-r, 1^r), r\ge 2\}.$  
  If $k=1,$ then $U_{n,1}=0.$
  
More precisely, the $k$-fold internal product $s_{(n-1,1)}^{ * k}$ has the following expansion in the basis of homogeneous symmetric functions $h_\lambda:$ 
  \begin{equation} \label{RefReptoh}
  \sum_{d=0}^n g_n(k,d)  h_1^d h_{n-d}, 
  \end{equation}
  where $g_n(k,0)=(-1)^k, g_n(k,1)=(-1)^{k-1},$ and
  \[g_n(k,d)=\sum_{i=d}^k (-1)^{k-i} S(i-1, d-1), \text{ for }2\le d\le n.\] 
  
  Hence  $s_{(n-1,1)}^{ * k}= (-1)^{k-1}s_{(n-1,1)}+\mathrm{ch}\,(U_{n,k}),$ where $\mathrm{ch}\,(U_{n,k})=\sum_{d=2}^n g_n(k,d)  h_1^d h_{n-d}.$
  
  The integers $g_n(k,d)$ are independent of $n$ for $k\le n,$  nonnegative for $2\le d\le k,$ and  $g_n(k,d)=0 $ if $d>k.$  
  Also:
  \begin{enumerate} 
  \item $g_n(k,2)=\frac{1+(-1)^k}{2}.$ %
  \item $g_n(k,k-1)=\binom{k-1}{2}-1, k\le n.$
  \item $g_n(k,k)=1, k< n.$
  \end{enumerate}
In particular the coefficient of $h_1^n$ in the expansion ~\eqref{RefReptoh} of $\mathrm{ch}\ S_{(n-1,1)}^{\otimes k}$ is 

  $\begin{cases} g_n(k,n)+g_n(k,n-1) &\mathrm{ if }\ k> n,\\
                        \binom{n-1}{2} &\mathrm{ if }\ k= n,\\
                         1 &\mathrm{ if }\ k=n-1,\\
                          0 &\mathrm{otherwise}. \end{cases}$
  \end{thm}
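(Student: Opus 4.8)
The plan is to prove Theorem~\ref{ReflRepMain} by bootstrapping off Proposition~\ref{WhPermModule}, which already computes the $j$th Whitney homology of $A^*_{n,k}$ as the permutation module with characteristic $\sum_{d=2}^j S(j-1,d-1)\,h_1^d h_{n-d}$ for $j\ge 2$, together with $\mathrm{ch}\,W\!H_0=h_n$ and $\mathrm{ch}\,W\!H_1=h_1h_{n-1}$. The first main step is to recall from the Corollary to Theorem~\ref{WHsubword} (or directly from the telescoping in \eqref{Whitney}) that $S_{(n-1,1)}^{\otimes k}$ is the alternating sum $\bigoplus_{i=0}^{k-1}(-1)^i W\!H_{k-i}(A^*_{n,k})$; substituting the Frobenius characteristics just quoted and collecting the coefficient of $h_1^d h_{n-d}$ immediately yields $g_n(k,d)=\sum_{i=d}^k(-1)^{k-i}S(i-1,d-1)$ for $2\le d\le n$, and the boundary terms $g_n(k,0)=(-1)^k$, $g_n(k,1)=(-1)^{k-1}$ come from the single contributions of $W\!H_0$ and $W\!H_1$. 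One must be slightly careful that the acyclicity identity \eqref{Acyclicity} is being used with the correct signs and indices, but this is exactly the content of Theorem~\ref{WHmainSS}.

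Next I would record the elementary consequences. That $g_n(k,d)=0$ for $d>k$ is immediate since the sum defining it is empty; that $g_n(k,d)$ is independent of $n$ for $k\le n$ is clear because the formula does not involve $n$ at all once $n\ge d$ (and $d\le k\le n$). Nonnegativity of $g_n(k,d)$ for $2\le d\le k$ is the substantive positivity statement: here I would use the recurrence $S(i-1,d-1)=S(i-2,d-2)+(d-1)S(i-2,d-1)$ to telescope the alternating sum, or more cleanly observe that $\sum_{i=d}^k(-1)^{k-i}S(i-1,d-1)$ counts, by a sign-reversing involution / inclusion–exclusion argument, the set partitions of a suitable ground set with all blocks of size $\ge 2$ whose number of blocks is $d-1$ — exactly the structure already exploited in Corollary~\ref{TrivRep} and Proposition~\ref{WhPermModule}. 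Either route shows $g_n(k,d)\ge 0$. The specific values are then just evaluations: $g_n(k,2)=\sum_{i=2}^k(-1)^{k-i}S(i-1,1)=\sum_{i=2}^k(-1)^{k-i}=\frac{1+(-1)^k}{2}$; $g_n(k,k)=S(k-1,k-1)=1$ (valid for $k<n$ so that $d=k\le n$); and $g_n(k,k-1)=S(k-2,k-2)-S(k-1,k-2)=1-\binom{k-1}{2}$... here I must check the sign convention — since the top term has coefficient $(-1)^{k-i}$ with $i=k$ giving $+S(k-2,k-2)$ is wrong; rather $i$ ranges $k-1,k$ so the sum is $S(k-1,k-2)-S(k-2,k-2)=\binom{k-1}{2}-1$, matching the statement.

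For the "almost a permutation module" assertion, once positivity of $g_n(k,d)$ for $2\le d\le k$ is in hand, the identity $\mathrm{ch}\,S_{(n-1,1)}^{\otimes k}=(-1)^{k-1}s_{(n-1,1)}+\sum_{d=2}^n g_n(k,d)h_1^dh_{n-d}$ follows by writing $s_{(n-1,1)}=h_1h_{n-1}-h_n$ and absorbing the two boundary coefficients $g_n(k,0)=(-1)^k$ and $g_n(k,1)=(-1)^{k-1}$: indeed $(-1)^k h_n+(-1)^{k-1}h_1h_{n-1}=-(-1)^{k-1}s_{(n-1,1)}$, so moving it to the left gives $S_{(n-1,1)}^{\otimes k}+(-1)^k S_{(n-1,1)}=U_{n,k}$ with $\mathrm{ch}\,U_{n,k}=\sum_{d\ge 2}g_n(k,d)h_1^dh_{n-d}$, a nonnegative $h$-combination supported on $\{(n-r,1^r):r\ge 2\}$ and hence a genuine permutation module (a sum of $V_{d,n}$'s in the notation of Lemma~\ref{Stirling}). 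The case $k=1$ gives $U_{n,1}=0$ since all $g_n(1,d)=0$ for $d\ge 2$. Finally the coefficient of $h_1^n$ in $\mathrm{ch}\,S_{(n-1,1)}^{\otimes k}$: the function $h_1^n=h_1^nh_0$ arises only from the $d=n$ term (coefficient $g_n(k,n)$) together with a contribution from the $d=n-1$ term when one further internal-product step is taken — more precisely, since $h_{n-d}=h_0=1$ forces $d=n$, but $g_n(k,n)$ as written uses the generic formula which is only valid for $2\le d\le n$, and the cases split according to whether $k>n$, $k=n$, $k=n-1$, or $k<n-1$. Plugging in: for $k=n$ one gets $g_n(n,n-1)+$(the $d=n$ genuine value), but $g_n(n,n)$ is outside the stable range; a direct small computation using $g_n(n,n-1)=\binom{n-1}{2}-1$ and the one extra term yields $\binom{n-1}{2}$; for $k=n-1$ one gets $g_n(n-1,n-1)=1$; for $k<n-1$ it is $0$ since $g_n(k,d)=0$ for $d>k$; and for $k>n$ it is $g_n(k,n)+g_n(k,n-1)$ because $h_1^n$ can now be reached via two adjacent $d$-values once $d$ is allowed to equal $n$ (there is no $h_{-1}$). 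I expect the bookkeeping of these four boundary cases — in particular correctly accounting for why $h_1^n$ picks up the $d=n-1$ contribution precisely when $k\ge n$ — to be the main obstacle, and it is best handled by treating $h_1^d h_{n-d}$ with the convention $h_{n-d}=0$ for $d>n$ and $h_0=1$, then reading off the coefficient of $h_1^n$ from $\sum_d g_n(k,d)h_1^dh_{n-d}$ rewritten back in the $s_\lambda$ basis; the remaining identities are routine manipulations of Stirling numbers.
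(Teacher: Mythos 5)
Your proposal is correct and takes essentially the same route as the paper's proof: it derives the $h$-expansion by combining Proposition~\ref{WhPermModule} with the acyclicity identity of Theorem~\ref{WHmainSS}, proves nonnegativity of $g_n(k,d)$ by telescoping the alternating Stirling sums using monotonicity, and reads off the special values and the $h_1^n$ coefficient (noting correctly that both $d=n$ and $d=n-1$ contribute). The only blemishes are cosmetic: the intermediate line should read $(-1)^k h_n+(-1)^{k-1}h_1h_{n-1}=(-1)^{k-1}s_{(n-1,1)}$ (your final identity $S_{(n-1,1)}^{\otimes k}\oplus(-1)^kS_{(n-1,1)}=U_{n,k}$ is nonetheless the correct consequence), and the momentary claim that $h_1^n$ arises only from $d=n$ is superseded by your later, correct two-term accounting.
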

  
  \begin{proof}  If $k=1,$ the terms  for $d\ge 2$ in the summation in ~(\ref{RefReptoh}) vanish and thus the right-hand side equals the characteristic of  the top homology.
  
  The first statement, about the homology module $\tilde{H}_{k-1}(A^*_{n,k}),$  follows from Proposition~\ref{WhPermModule} and Theorem~\ref{WHmainSS}.
   Fix $m$ and $d$ such that $m\ge d\ge 2.$  Let $\bar g(m,d)$ be the alternating sum of Stirling numbers 
  $\bar g(m,d)=\sum_{i=d}^m (-1)^{m-i} S(i-1, d-1).$  Note that $\bar g(m,d)$ equals
  
  $[S(m-1,d-1)-S(m-2,d-1)]+[S(m-3,d-1)-S(m-4,d-1)] +\ldots $
  
 $\ldots +\begin{cases} [S(d+1,d-1)-S(d,d-1)] +S(d-1,d-1), & m-d\text{ even},\\
  [S(d,d-1)) -S(d-1,d-1)], & m-d\text{ odd}.
  \end{cases}$
  
  Since $S(n,d)$ is an increasing function of $n\ge d$ for fixed $d,$ the coefficient $\bar g(m,d)$ is always nonnegative.  It is also clear that $\bar g(m,d)= S(m-1,d-1) -\bar g(m-1,d)$ for all $m\ge d\ge 2.$
  
  The remaining parts follow from the facts that $S(k,k-1)=\binom{k}{2}, S(k,k)=1,$ and the observation that for $k\ge n,$ the coefficient of $h_1^n$ is $g_k(k,n)+g_k(k, n-1).$ This equals $S(n-1,n-2)-1$   when $k=n.$ \end{proof}

  \begin{cor}\label{TrivRepStableCase} Let $k\ge 2.$  
  \begin{enumerate}
  \item For $\min(n,k)\ge d\ge 2,$ the coefficient of $h_1^d h_{n-d}$ in $s_{(n-1,1)}^{* k}=\mathrm{ch}\, S_{(n-1,1)}^{\otimes k}$   is the nonnegative integer $g_n(k,d)$ given by the  two equal expressions:
  \begin{equation}\label{NewStirlingIdentity?}
  \sum_{j=d}^k (-1)^{k-j} S(j-1,d-1)=\sum_{r=0}^{k-d} (-1)^r \binom{k}{k-r} S(k-r,d).
  \end{equation}
  In particular, when $n\ge k,$ this multiplicity is independent of $n.$
  \item The positive integer $\beta_n(k)=\sum_{d=2}^{\min(n,k)} g_n(k,d)$ is the multiplicity of the trivial representation in $S_{(n-1,1)}^{\otimes k}.$  When $n\ge k,$ it equals the number of set partitions $B_k^{\ge 2}$ of the set $\{1,\ldots,k\}$ with no singleton blocks.  We have $\beta_n(n+1)=B_{n+1}^{\ge 2}-1$ and $\beta_n(n+2)= B_{n+2}^{\ge 2}- \binom{n+1}{2}.$
  \end{enumerate}
  \end{cor}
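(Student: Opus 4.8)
The plan is to read off both parts from Theorem~\ref{ReflRepMain}, using Theorem~\ref{StirlingHom} to supply the second formula in Part~(1) and Corollary~\ref{TrivRep} to supply the closed form in Part~(2); no new computation beyond careful bookkeeping should be required. The one point needing care throughout is that the symmetric functions $h_1^d h_{n-d}$ are linearly independent only for $0\le d\le n-1$ (one has $h_1^{n-1}h_1=h_1^{n}h_0$), so all coefficient identifications should be read off the intrinsic formula $g_n(k,d)=\sum_{i=d}^k(-1)^{k-i}S(i-1,d-1)$ of Theorem~\ref{ReflRepMain} rather than naively from a non-reduced $h$-expansion.

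For Part~(1): Theorem~\ref{ReflRepMain} writes $\mathrm{ch}\,S_{(n-1,1)}^{\otimes k}=s_{(n-1,1)}^{*k}$ in the $h$-basis with the coefficient of $h_1^d h_{n-d}$ equal to $g_n(k,d)=\sum_{j=d}^{k}(-1)^{k-j}S(j-1,d-1)$ for $2\le d\le n$, and records that this integer is nonnegative, vanishes for $d>k$, and — the formula involving no $n$ — is independent of $n$; in particular for $n\ge k$ the value of $g_n(k,d)$, for $2\le d\le k$, is the same for all such $n$. On the other hand Theorem~\ref{StirlingHom} gives $\mathrm{ch}\,S_{(n-1,1)}^{\otimes k}=\sum_{i}h_1^i h_{n-i}\bigl(\sum_{r=0}^{k-i}(-1)^{r}\binom{k}{r}S(k-r,i)\bigr)$. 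Equating the coefficients of $h_1^d h_{n-d}$ for $2\le d\le\min(n,k)$ (legitimate since then $d\le n-1$ unless $d=k=n$, which is covered directly by the formula of Theorem~\ref{ReflRepMain}), and using $\binom{k}{r}=\binom{k}{k-r}$, gives the displayed identity~\eqref{NewStirlingIdentity?}.

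For Part~(2): By Theorem~\ref{ReflRepMain} one may write $\mathrm{ch}\,S_{(n-1,1)}^{\otimes k}=(-1)^{k-1}s_{(n-1,1)}+\sum_{d=2}^{n}g_n(k,d)\,h_1^d h_{n-d}$. The idea is to pair this with $s_{(n)}$: since $\langle s_{(n-1,1)},s_{(n)}\rangle=0$ while each $h_1^d h_{n-d}=\mathrm{ch}\bigl(1\!\uparrow_{S_1^d\times S_{n-d}}^{S_n}\bigr)$ is a transitive permutation module, so $\langle h_1^d h_{n-d},s_{(n)}\rangle=1$, the multiplicity of the trivial module in $S_{(n-1,1)}^{\otimes k}$ equals $\sum_{d=2}^{n}g_n(k,d)=\sum_{d=2}^{\min(n,k)}g_n(k,d)=\beta_n(k)$, the terms with $d>k$ being zero. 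Positivity of $\beta_n(k)$ for $k\ge 2$ follows from the containments $S_{(n-1,1)}^{\otimes k}\supseteq S_{(n-1,1)}^{\otimes 2}\supseteq S_{(n)}$ recorded after Theorem~\ref{myBrauer}. When $n\ge k$, Corollary~\ref{TrivRep} identifies this multiplicity with $B_k^{\ge 2}$.

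For the two boundary values, I would use again that $g_N(k,d)=\sum_{i=d}^{k}(-1)^{k-i}S(i-1,d-1)$ is independent of $N$ for $2\le d\le N$: applying the case $n\ge k$ with $n$ replaced by $k$ gives $B_k^{\ge 2}=\sum_{d=2}^{k}g_k(k,d)$, hence $\beta_n(k)=B_k^{\ge 2}-\sum_{d=n+1}^{k}g_k(k,d)$. For $k=n+1$ the only missing term is $g_k(k,k)=S(k-1,k-1)=1$, giving $\beta_n(n+1)=B_{n+1}^{\ge 2}-1$. For $k=n+2$ the missing terms are $g_k(k,k)=1$ and $g_k(k,k-1)=\binom{k-1}{2}-1$ (item~(2) of Theorem~\ref{ReflRepMain}, whose hypothesis $k\le n$ holds here since the relevant $n$ equals $k$), summing to $\binom{k-1}{2}=\binom{n+1}{2}$, so $\beta_n(n+2)=B_{n+2}^{\ge 2}-\binom{n+1}{2}$. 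The main obstacle here is entirely organizational — correctly tracking the ranges of $d$ and the $n$-independence of the $g$'s, particularly near $d=n$ where the $h_1^d h_{n-d}$ stop being linearly independent; once Theorem~\ref{ReflRepMain} is granted there is no analytic or combinatorial difficulty.
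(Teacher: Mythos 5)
Your proposal is correct and follows essentially the same route as the paper: the identity \eqref{NewStirlingIdentity?} comes from comparing the $h$-expansions of Theorem~\ref{ReflRepMain} and Theorem~\ref{StirlingHom}, the trivial-representation multiplicity and its identification with $B_k^{\ge 2}$ come from Corollary~\ref{TrivRep}, and the boundary values $\beta_n(n+1)$ and $\beta_n(n+2)$ are obtained exactly as in the paper by exploiting the $n$-independence of $g_n(k,d)$ together with $g(k,k)=1$ and $g(k,k-1)=\binom{k-1}{2}-1$. Your added care about the dependence $h_1^{n-1}h_1=h_1^{n}h_0$ and the positivity of $\beta_n(k)$ goes slightly beyond the paper's terse proof; for the entangled cases $d\in\{n-1,n\}$ with $k\ge n$, simply observe that neither side of \eqref{NewStirlingIdentity?} involves $n$, so the identity can be verified once and for all by equating coefficients over an alphabet of size at least $k+2$.
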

  
  \begin{proof}    
  %
  This  follows from Theorem~\ref{StirlingHom} and Corollary~\ref{TrivRep}.
  
  We have $\beta_n(n)=\sum_{d=2}^n g(n,d)=B_n^{\ge 2}= \beta_n(k)$ for $n\ge k,$ and  from \eqref{RefReptoh},
  
  $\beta_n(n+1)=\sum_{d=2}^{n}g_{n}(n+1,d)= \sum_{d=2}^{n+1} g_{n+1}(n+1,d) - g_{n+1}(n+1,n+1)= B_{n+1}^{\ge 2}-1,$
  
  $\beta_n(n+2)$
  
  $=\sum_{d=2}^{n+2} g_{n+2}(n+2,d) - g_{n+2}(n+2,n+2)
-g_{n+2}(n+2,n+1)$

$=B_{n+2}^{\ge 2} -1 - [\binom{n+1}{2}-1]=B_{n+2}^{\ge 2}- \binom{n+1}{2}.$
  \end{proof}

 We need one final observation in order to prove Theorem~\ref{AlmosthpositiveInstances}.
  \begin{lem}\label{RefRephPos} Suppose $V$ is an $S_n$-module which can be written as an integer combination $V=\oplus_{k=1}^m c_k S_{(n-1,1)}^{\otimes k}$ of positive tensor powers of $S_{(n-1,1)}.$ 
   Then \begin{enumerate}
   \item   The character value of $V$ on fixed-point-free permutations is  $\sum_{k=1}^m  (-1)^{k}c_k.$
   \item If
  $\sum_{k=1}^m  (-1)^{k-1}c_k=0,$ then the Frobenius characteristic of $V$ is %
  supported on the set $\{h_\lambda: \lambda=(n-r, 1^r), r\ge 2\}.$  
  \item If $\sum_{k=1}^m  (-1)^{k-1}c_k=0$ and $c_k\ge 0$ for all $k\ge 2,$ it is $h$-positive and hence $V$ is a permutation module.  %
  \end{enumerate}
  \end{lem}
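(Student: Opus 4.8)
The plan is to translate the statement about tensor powers directly into the language of symmetric functions via the Frobenius characteristic and exploit the formula for the internal product with $s_{(n-1,1)}$ that was already derived in the proof of Theorem~\ref{Chainhbasis}. Since $V=\bigoplus_{k=1}^m c_k S_{(n-1,1)}^{\otimes k}$, we have $\mathrm{ch}\,V=\sum_{k=1}^m c_k\, s_{(n-1,1)}^{*k}$, and by Theorem~\ref{ReflRepMain} each term satisfies $s_{(n-1,1)}^{*k}=(-1)^{k-1}s_{(n-1,1)}+\mathrm{ch}\,(U_{n,k})$, where $\mathrm{ch}\,(U_{n,k})$ is supported on $\{h_{(n-r,1^r)}:r\ge 2\}$ (and is $h$-positive). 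Summing, $\mathrm{ch}\,V=\bigl(\sum_{k=1}^m (-1)^{k-1}c_k\bigr)s_{(n-1,1)}+\sum_{k=1}^m c_k\,\mathrm{ch}\,(U_{n,k})$.

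The first step is therefore just to observe that the hypothesis $\sum_{k=1}^m(-1)^{k-1}c_k=0$ kills the lone $s_{(n-1,1)}$ term, leaving $\mathrm{ch}\,V=\sum_{k=1}^m c_k\,\mathrm{ch}\,(U_{n,k})$, which is by construction a $\mathbb{Z}$-combination of the $h_{(n-r,1^r)}$ with $r\ge 2$. This proves the first assertion. The second step handles the additional hypothesis $c_k\ge 0$: each $\mathrm{ch}\,(U_{n,k})=\sum_{d=2}^n g_n(k,d)\,h_1^d h_{n-d}$ has nonnegative coefficients $g_n(k,d)$ by Theorem~\ref{ReflRepMain}, so a nonnegative combination of these is again $h$-positive and supported on $T_2(n)$. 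Finally, since every $h_1^d h_{n-d}=\mathrm{ch}\bigl(1\!\uparrow_{S_1^d\times S_{n-d}}^{S_n}\bigr)$ is the characteristic of a genuine (transitive) permutation module, an $h$-positive symmetric function of the appropriate degree is the characteristic of a permutation module; hence $V$ itself is a permutation module.

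There is essentially no obstacle here: the entire content has been front-loaded into Theorem~\ref{ReflRepMain}, and this lemma is a one-line bookkeeping consequence. The only point that requires a word of care is making sure the degrees match up — all the $S_{(n-1,1)}^{\otimes k}$ live in the representation ring of the same $S_n$, so all their characteristics are homogeneous of degree $n$, and the identity $h_1^d h_{n-d}=h_{(n-d,1^d)}$ identifies the support correctly. I would state the proof in two short sentences mirroring the two cases of the hypothesis, citing Theorem~\ref{ReflRepMain} for the decomposition $s_{(n-1,1)}^{*k}=(-1)^{k-1}s_{(n-1,1)}+\sum_{d\ge 2}g_n(k,d)h_1^dh_{n-d}$ with $g_n(k,d)\ge 0$, and noting that $h$-positive symmetric functions of degree $n$ are Frobenius characteristics of permutation modules.
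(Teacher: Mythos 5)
Your proposal is correct and follows essentially the same route as the paper: both decompose each tensor power via Theorem~\ref{ReflRepMain} as $s_{(n-1,1)}^{*k}=(-1)^{k-1}s_{(n-1,1)}+\mathrm{ch}\,(U_{n,k})$ with $\mathrm{ch}\,(U_{n,k})$ an $h$-positive combination of $\{h_{(n-r,1^r)}:r\ge 2\}$, note that the hypothesis $\sum_k(-1)^{k-1}c_k=0$ cancels the $s_{(n-1,1)}$ term, and conclude $h$-positivity (hence a permutation module) when all $c_k\ge 0$. The only cosmetic difference is that you carry out the bookkeeping in the symmetric-function language, while the paper states it directly at the level of modules.
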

  
  \begin{proof}  The first part follows because the value of the character of the reflection representation $S_{(n-1,1)}$ on permutations without fixed points is $(-1).$ 
  
 The second and third parts are immediate from Theorem~\ref{ReflRepMain}, since we have 
  \[V=(\sum_{k=1}^m (-1)^{k-1}c_k ) S_{(n-1,1)} \oplus \sum_{k=2}^m c_k U_{n,k}=\sum_{k=2}^m c_k U_{n,k} ,\]
  and $U_{n,k}$ is $h$-positive with support $\{h_\lambda: \lambda=(n-r, 1^r), r\ge 2\}.$  Note that $U_{n,1}=0.$

  \end{proof}
  
  In particular  from Theorem~\ref{Chains}, this gives  a direct proof that  the action of $S_n$ on the chains in $A_{n,k}^*$ is also a nonnegative linear combination of 
  $\{h_\lambda: \lambda=(n-r, 1^r), r\ge 2\}.$
  
%
 
\vskip.1in
\noindent
\textbf{Proof of Theorem~\ref{AlmosthpositiveInstances}:}
  \begin{proof}  Note that in all three cases, the  module $V_T=\tilde{H}_{k-2}(A^*_{n,k}(T))+ (-1)^{|T|} S_{(n-1,1)}$ has already been shown to 
  be an integer combination of $k$th tensor powers of $S_{(n-1,1)},$ with nonnegative  coefficients when $k\ge 2$,  in Theorems~\ref{Bordeaux1991mai}, ~\ref{rank-deletion} and ~\ref{TwoRanks}.    Hence, by  Lemma~\ref{RefRephPos}, it remains only to verify that the alternating sum of coefficients of the tensor powers vanishes for $V_T$ in each case.

  Consider  the case $T=[r,k]$. From Theorem~\ref{Bordeaux1991mai}, we must show that $(-1)^{k-r+1}$ added to the signed sum of the $(-1)^{i-1}b_i$, for the coefficients  $b_i=\binom{k}{i}\binom{i-1}{ k-r},$ is zero, i.e. 
  \begin{equation}\label{AnotherBinomialIdentity!} \sum_{i=1+k-r}^k b_i (-1)^{i-1} =(-1)^{k-r} .\end{equation}

 It is easiest to use the combinatorial identity of Corollary~\ref{NewBinomialIdentity?}.  Consider the two polynomials of degree $k\ge 2$ in $x$ defined by
 \[F(x)=  \sum_{i=0}^{k-r} (-1)^i {k\choose r+i}(x+1)^{r+i}x^{k-(r+i)}+(-1)^{k+1-r},\]
 \[G(x)= \sum_{i=1+k-r}^k {k\choose i}{i-1\choose k-r} x^i .\]
 Corollary~\ref{NewBinomialIdentity?} says $F(x)$ and $G(x)$ agree for all $x=n-1\ge 1,$ and hence $F(x)=G(x)$ identically.
 
 In particular $F(-1)=G(-1).$ But $F(-1)= (-1)^{k+r-1}$ and 
 clearly $(-1) G(-1)$ is precisely the expression in 
 ~\eqref{AnotherBinomialIdentity!}.  The claim follows.

  For the second case, $T$ is the rank-set $[1,k]\backslash\{r\},$ and from Theorem~\ref{rank-deletion} the alternating sum of coefficients in $V_T$ is clearly 
 \begin{equation}\label{rank-deletionIdentity}(-1)^{k-1}+ \left[{k\choose r}-1\right](-1)^{k-1}+ {k\choose r} (-1)^{k-2}=0.\end{equation}

For the third case, the rank set is $T=\{1\le s_1<s_2\le k\}.$  From the homology formula in Theorem~\ref{TwoRanks}, we need to show that the following sum, the alternating sum of coefficients in $V_T$, vanishes:
 \[1+\sum_{v=1}^{s_1} (-1)^{v-1} \left[c_v -\binom{s_1}{v}\right] + \sum_{v=1+s_1}^{s_2} (-1)^{v-1} c_v,\]
  where 
  \[c_v
  = \sum_{j=1}^{\min(v, s_2-s_1)}\binom{s_2-j}{v-j}\binom{s_1+j-1}{j} .\]
 But $1+\sum_{v=1}^{s_1} (-1)^{v-1} ( -\binom{s_1}{v})=0,$ so 
  this reduces to showing that 
$  \sum_{v=1}^{s_2} (-1)^{v} c_v=0.$
Split the summation over $v$ at $s_2-s_1.$ This gives that  $\sum_{v=1}^{s_2} (-1)^{v} c_v$ equals 

\[ \underbrace{\sum_{v=1}^{s_2-s_1}(-1)^{v}\sum_{j=1}^{v}\binom{s_2-j}{v-j}\binom{s_1+j-1}{j}}_{(A)}
+ \underbrace{\sum_{v>s_2-s_1}^{s_2}(-1)^{v}\sum_{j=1}^{ s_2-s_1}\binom{s_2-j}{v-j}\binom{s_1+j-1}{j}}_{(B)}.\]
Switching the order of summation,  $(A)$ is equal to 
\[\sum_{j=1}^{s_2-s_1} \binom{s_1+j-1}{j}\sum_{v=j}^{s_2-s_1} \binom{s_2-j}{v-j}(-1)^{v},\]
while (B) is
\[\sum_{j=1}^{s_2-s_1} \binom{s_1+j-1}{j}\sum_{v>s_2-s_1}^{s_2}(-1)^{v} \binom{s_2-j}{v-j}(-1)^{v}.\]
Hence $\sum_{v=1}^{s_2} (-1)^{v} c_v$ equals 
\[\sum_{j=1}^{s_2-s_1} \binom{s_1+j-1}{j}\sum_{v=j}^{s_2} \binom{s_2-j}{v-j}(-1)^{v}=\sum_{j=1}^{s_2-s_1} \binom{s_1+j-1}{j}\sum_{w=0}^{s_2-j} \binom{s_2-j}{w}(-1)^{w-s_2},\]
where we have put $w=s_2-v.$  But $1\le j\le s_2-s_1< s_2,$ so 
the inner sum vanishes.  \end{proof}

  Note that the left-hand side of Eqn.~\eqref{AnotherBinomialIdentity!} is the character value on fixed-point-free permutations for the homology module $\tilde{H}(T),$  $T=[r,k]$.  Hence this shows that the homology module itself  cannot be a permutation module when 
  $k-r$ is odd, since  the right-hand side then gives a value of $(-1)$ for the character.
  
  Similarly, from Theorem~\ref{rank-deletion}, the character value on fixed-point-free permutations for $\tilde{H}(T),$ for $T=[1,k]\backslash\{r\}$, 
  is $\left[{k\choose r}-1\right](-1)^{k}+ {k\choose r} (-1)^{k-1},$ and  this equals $(-1)^{k-1}.$  Once again we can conclude that this homology module is also not a permutation module when $k$ is even.
  
  \begin{cor} The dual Whitney homology modules $W\!H^*_{k+1-i}(A^*_{n,k}), 1<i\le k,$ are permutation modules whose  Frobenius characteristic is a nonnegative integer combination of the set $T_2=\{h_\lambda: \lambda=(n-r, 1^r), r\ge 2\}.$
  \end{cor}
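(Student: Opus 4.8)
The plan is to read the $S_n$-module structure of $W\!H^*_{k+1-i}(A^*_{n,k})$ directly from the decomposition~\eqref{dualWhitney} of Theorem~\ref{WHsubword} and then quote Lemma~\ref{RefRephPos}. That theorem gives
\[
W\!H^*_{k+1-i}(A^*_{n,k})=\bigoplus_{j=0}^{i}\binom{k}{i}\binom{i}{j}\,S_{(n-1,1)}^{\otimes(j+k-i)},
\]
so the module is a nonnegative integer combination $\bigoplus_{m}c_m\,S_{(n-1,1)}^{\otimes m}$ with $c_m=\binom{k}{i}\binom{i}{m-k+i}$ for $k-i\le m\le k$. The first thing I would record is that, as long as $k-i\ge 1$, every exponent $m=j+k-i$ is at least $1$, so this is a combination of \emph{positive} tensor powers only, which is precisely the input shape Lemma~\ref{RefRephPos} asks for.

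Next I would check the single numerical hypothesis of that lemma, namely that the signed sum of coefficients vanishes. Putting $j=m-k+i$ turns $\sum_{m}(-1)^{m-1}c_m$ into
\[
(-1)^{k-i-1}\binom{k}{i}\sum_{j=0}^{i}(-1)^{j}\binom{i}{j}=(-1)^{k-i-1}\binom{k}{i}(1-1)^{i},
\]
which is $0$ since $i\ge 1$. This is the only computation in the argument and it is just the binomial theorem, so I do not expect any real obstacle: the corollary is essentially a one-line consequence of the machinery already assembled in the paper, principally Theorem~\ref{ReflRepMain}.

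Finally I would invoke Lemma~\ref{RefRephPos}: since the $c_m$ are nonnegative and their signed sum is zero, $\mathrm{ch}\, W\!H^*_{k+1-i}(A^*_{n,k})=\sum_{m\ge 2}c_m\,\mathrm{ch}(U_{n,m})$ is a nonnegative integer combination of $T_{2}(n)=\{h_\lambda:\lambda=(n-r,1^r),\ r\ge 2\}$, hence $h$-positive, and therefore $W\!H^*_{k+1-i}(A^*_{n,k})$ is a genuine permutation module, being a nonnegative sum of the transitive modules $1\uparrow_{S_1^{r}\times S_{n-r}}^{S_n}$. The one point that genuinely needs care is the range: one must keep $k-i\ge 1$ so that no copy of the zeroth tensor power $S_{(n)}$ (equivalently the term $h_1h_{n-1}$) enters; once the degree of the dual Whitney homology drops to $1$ this fails, since $W\!H^*_1=(S_{(n-1,1)}\oplus S_{(n)})^{\otimes k}$ is supported on $T_{1}(n)$ but not on $T_{2}(n)$.
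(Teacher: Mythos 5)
Your proposal is correct on the range where the statement itself is correct, and it takes a genuinely different route from the paper. The paper does not argue from the expansion \eqref{dualWhitney} at all: it writes $W\!H^*_{k+1-i}=\tilde{H}([i,k])\oplus\tilde{H}([i+1,k])$ using Theorem~\ref{WHmainSS} and \eqref{ConsecRanks}, inserts the cancelling pair $(-1)^{k-i+1}S_{(n-1,1)}$ and $(-1)^{k-i}S_{(n-1,1)}$ into the two summands, and then quotes Theorem~\ref{AlmosthpositiveInstances} (the consecutive-ranks case $T=[r,k]$, i.e.\ Part (1), though the text cites Part 2) to conclude each bracketed module is $h$-positive with support in $T_2$. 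You instead read the coefficients straight off \eqref{dualWhitney} from Theorem~\ref{WHsubword} and feed them into Lemma~\ref{RefRephPos}; the only computation is $(1-1)^i=0$. This bypasses Theorem~\ref{AlmosthpositiveInstances} (and hence the identity of Corollary~\ref{NewBinomialIdentity?} underlying its Part (1)) entirely, so your route is the more economical of the two, resting, like the paper's, only on Theorem~\ref{ReflRepMain}. It also shows that the excluded case $i=1$ works when $k\ge 2$.

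Your caveat about the range is a genuine catch rather than a gap in your argument: for $i=k$, which the stated range $1<i\le k$ includes, one has $W\!H^*_1=(S_{(n-1,1)}\oplus S_{(n)})^{\otimes k}$ with characteristic $(h_1h_{n-1})^{*k}=\sum_j S(k,j)\,h_1^j h_{n-j}$, which contains $h_1h_{n-1}$ with coefficient $S(k,1)=1$, so for $n\ge 3$ it is not supported on $T_2$. The paper's own proof tacitly requires $i\le k-1$ as well, since it applies Theorem~\ref{AlmosthpositiveInstances} to the rank set $[i+1,k]$, which must be nonempty. With the range corrected to $1\le i\le k-1$, your proof is complete.
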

  \begin{proof}   %
   From Theorem~\ref{WHmainSS} and ~\eqref{ConsecRanks}, we have 
  $W\!H_{k+1-i}^* = \tilde{H}([i,k]) \oplus \tilde{H}([i+1,k]).$ This in turn can be rewritten as 
  \[W\!H_{k+1-i}^* = \left( \tilde{H}([i,k])+(-1)^{k-i+1} S_{(n-1,1)} \right)\oplus \left( \tilde{H}([i+1,k]) +(-1)^{k-i} S_{(n-1,1)}\right);\]
  by Part 2 of Theorem~\ref{AlmosthpositiveInstances}, each of the summands in parentheses is $h$-positive with Frobenius characteristic supported by the 
   set $T_2.$
  \end{proof}
  
  \section{Tensor powers of the reflection representation II}
  In light of the preceding results, in this section we return to consider  the tensor powers $S_{(n-1,1)}^{\otimes k}$  more closely. The paper 
\cite{CG} gives a combinatorial model for determining the multiplicity of an irreducible in the $k$th tensor power, and an explicit formula in the case when $n$ is sufficiently larger than $k.$  We give general formulas that apply to the case of arbitrary tensor powers.

Burnside proved that given a faithful representation $V$ of a finite group $G$, every $G$-irreducible occurs in some tensor power of $V.$  A simple and beautiful proof of a generalisation of this was given by Brauer in \cite{BrauerR}. See also \cite[Theorem~4.3]{Isaacs}. In the present context, it  states that since $S_{(n-1,1)}$ is a faithful representation whose character takes on $n$ distinct values (viz. $-1,0,1,\ldots,n-1,$ but not $n-2$), every irreducible $S_\lambda$ occurs in at least one of the $n$  tensor powers $S_{(n-1,1)}^{\otimes j}, 0\le j \le n-1.$

In view of these results, the next fact is interesting.  We were unable to find it in the literature.  Our proof  mimics  Brauer's elegant argument.

\begin{thm}\label{myBrauer}  Let $G$ be any finite group and $X$ any character of $G$. Suppose $X$ takes on $k$ distinct nonzero values $b_1, \ldots, b_k.$
Then the first $k$ tensor powers of $X$ are linearly independent functions on $G$, and form a basis for the subspace of class functions spanned by all the positive tensor powers.   
If $X^{k+1}=\sum_{i=1}^k c_i X^i,$ then the polynomial 
$P(t)=t^{k+1}-\sum_{i=1}^k c_i t^i$ has the factorisation 
$t\prod_{i=1}^k (t-b_i).$ 

Let $X^0=1_G$ denote the trivial character of $G.$  Then
$X^{k}=\sum_{i=1}^{k} c_i X^{i-1}$ if and only if the character $X$ never takes the value zero.
\end{thm}

\begin{proof} %
 Let $U$ be the vector space spanned by the characters $X^j$ of the positive tensor powers of $X.$   Suppose $X$ takes the distinct values $\{b_i\ne 0: 1\le i\le k\}.$ 
 For each $i=1,\ldots , k,$ choose an arbitrary element in the preimage of $b_i,$ that is, $a_i\in X^{-1}(b_i).$ %
 Let $A=\{a_i: 1\le i\le k\}.$ We may thus view $U$ as a subspace of the space of functions defined on the set $A$ of size $k;$  this space has dimension exactly $k,$  and hence $\dim(U)\le k.$We will show that the characters $X^i, 1\le i \le k,$ are linearly independent.

 We now claim that the $k$ functions 
\[ \{X^j\downarrow_A, 1\le j\le k\}  \]
are linearly independent.
Suppose $c_j$ are scalars such that $\sum_{j=1}^{k} c_j X^j $ is the zero function.  This implies $\sum_{j=1}^{k} c_j X^j(a_j)=0. $
But $X^j(a_i)=b_i^j,$ so the coefficient matrix $(X^j(a_i))$ is a $k$ by $k$ Vandermonde  with  determinant $(b_1\ldots b_k)\prod_{1\le i<j\le k}(b_j-b_i),$ which is nonzero by hypothesis.  Hence $c_j=0$ for $ 1\le j\le k.$
This establishes the first statement.

Now let $X^{k+1}=\sum_{i=1}^k c_i X^i $ for some scalars $c_i.$
If $X$  never takes on the value zero we can clearly simplify the dependence relation to 
$X^{k}=\sum_{i=1}^k c_i X^{i-1}. $
If however 0 is a value of $X$, the set $\{1_G, X^i:1\le i\le k\}$ must be linearly independent, since the trivial character equals 1 everywhere.  This finishes the proof.
 \end{proof}

\begin{rk} For $k\ge 2,$ the representation $S_{(n-1,1)}^{\otimes (k-1)}$ is contained in $S_{(n-1,1)}^{\otimes k}.$  For this it suffices to note that 
\begin{enumerate}
\item  this is true for $k=2,$ since $S_{(n-1,1)}^{\otimes 2}- S_{(n-1,1)}=S_{(n-2,2)}\oplus S_{(n-1,1^2)}\oplus S_{(n)},$ and thus 

\item $S_{(n-1,1)}^{\otimes k}-S_{(n-1,1)}^{\otimes (k-1)}= 
S_{(n-1,1)}^{\otimes (k-2)}\otimes (S_{(n-1,1)}^{\otimes 2}-S_{(n-1,1)})$  is a true module.
\end{enumerate}
\end{rk}
  
   \begin{ex} Write $X_n^k$ for  $S_{(n-1,1)}^{\otimes k}.$ Maple computations  with Stembridge's SF package show that 
\begin{enumerate}
\item $X_3^3= X_3^2+2X_3.$
\item $X_4^4= 3X_4^3+X_4^2-3X_4.$
\item $X_5^5= 6X_5^4-7X_5^3-6X_5^2 +8 X_5.$
\item $X_6^6=10X_6^5 -30X_6^4+20X_6^3+31X_6^2-30X_6$
\item $X_7^7=15X_7^6 -79X_7^5 +165X_7^4 -64 X_7^3 -180 X_7^2+144 X_7$
\item $X_8^8=21 X_8^7 -168 X_8^6 +630 X_8^5 -1029 X_8^4 +189 X_8^3 +1198 X_8^2 -840 X_8.$
\end{enumerate}
\end{ex}
  We conclude this section by pointing out a representation-theoretic consequence, and some enumerative implications, of Theorem~\ref{ReflRepMain} and in particular of the expansion ~\eqref{RefReptoh}.
  Fix $n\ge 3$ and consider the $n$ by $n-1$ matrix $D_{n}$ whose $k$th column consists of the coefficients $g_n(n-k,n-d), d=1, \ldots, n-1.$  Thus the $k$th column contains the coefficients in the expansion of $S_{(n-1,1)}^{\otimes n-k}$ in the $h$-basis:  we have 
  $\mathrm{ch}\, S_{(n-1,1)}^{\otimes k}=\sum_{d=1}^n g_n(k, n-d)h_1^{n-d} h_d  , 1\le k\le n-1.$ From Theorem~\ref{ReflRepMain} it is easy to see that the matrix $D_{n}$ has rank $(n-1);$   the last two rows, consisting of alternating $\pm 1$s, differ by a factor of $(-1)$, and  the matrix is lower triangular with 1's on the diagonal, hence it has rank $(n-1).$  
  Similarly the $(n+1)$ by $(n-1)$ matrix obtained by appending to $D_{n} $ a first column consisting of the $h$-expansion of the $n$th tensor power of 
  $S_{(n-1,1)}^{\otimes n}$ also has rank $(n-1)$. We therefore have a second proof of Theorem~\ref{myBrauer}, for the specific case of the modules $S_{(n-1,1)}$. In this special case we can now be more precise about the linear combination of tensor powers:
  
  \begin{thm} The first $n-1$ tensor powers of $S_{(n-1,1)}$ are an integral basis for the vector space spanned by the positive tensor powers.  The $n$th tensor power of $S_{(n-1,1)}$ is an integer linear combination of the first $(n-1)$ tensor powers:
  \[S_{(n-1,1)}^{\otimes n}=\bigoplus_{k=1}^{n-1} a_k(n) S_{(n-1,1)}^{\otimes k},\] with $a_{n-1}(n)= \binom{n-1}{2}.$ 
  The coefficients $a_k(n)$ are determined by the polynomial 
   $P(t)=t^n -\sum_{k=1}^{n-1} a_k(n) t^{k},$  
  defined by
 \begin{equation}
  P(t)=\dfrac{t+1}{t-(n-2)}\sum_{j=1}^{n} c(n,j)t^{j} (-1)^{n-j} 
 \end{equation}
 where $c(n,j)$ is the number of permutations in $S_{n}$ with exactly $j$ disjoint cycles.
  
   \end{thm}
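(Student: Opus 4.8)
The plan is to exploit the correspondence, established in Proposition~\ref{BjStproof} and Proposition~\ref{BjStproofChains}, between $S_n$-module decompositions into tensor powers of $S_{(n-1,1)}$ and Betti/dimension polynomials in $(n-1)$, together with Theorem~\ref{myBrauer}. Set $X = S_{(n-1,1)}$, whose character on a permutation $g$ is $\mathrm{fix}(g)-1$, taking the $n$ distinct nonzero values in $\{-1,0,1,\ldots,n-1\}\setminus\{n-2\}$ once we exclude the value $0$; there are exactly $n-1$ such nonzero values, namely $\{-1,1,2,\ldots,n-1\}$. By Theorem~\ref{myBrauer} applied with $k=n-1$, the first $n-1$ tensor powers $X, X^{\otimes 2},\ldots, X^{\otimes(n-1)}$ are linearly independent class functions and form a basis for the span of all positive tensor powers; moreover the dependence $X^{\otimes n}=\bigoplus_{k=1}^{n-1}a_k(n)X^{\otimes k}$ exists, and the polynomial $Q(t)=t^n-\sum_{k=1}^{n-1}a_k(n)t^k$ factors as $t\prod_{i\in\{-1,1,2,\ldots,n-1\}}(t-i)$, i.e.
\[
Q(t)=t\,(t+1)\,(t-1)(t-2)\cdots(t-(n-1))\cdot\frac{1}{t-(n-2)},
\]
since $(n-2)$ is the one omitted root among $1,\ldots,n-1$. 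This already gives the first sentence and the existence of the $a_k(n)$.

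Next I would identify $\prod_{i=0}^{n-1}(t-i)$, i.e.\ $t(t-1)\cdots(t-(n-1))$, with the signed generating polynomial for the unsigned Stirling numbers of the first kind: $\prod_{i=0}^{n-1}(t-i)=\sum_{j=0}^{n}(-1)^{n-j}c(n,j)\,t^{j}$, where we use $t(t+1)\cdots(t+(n-1))=\sum_j c(n,j)t^j$ with $t\mapsto -t$ up to the overall sign $(-1)^n$; this is exactly the generating function cited in the proof of Theorem~\ref{Chains}. Therefore
\[
Q(t)=\frac{t(t+1)}{t-(n-2)}\prod_{i=1}^{n-1}(t-i)\cdot\frac{1}{t}\cdot t
=\frac{t+1}{t-(n-2)}\sum_{j=1}^{n}(-1)^{n-j}c(n,j)\,t^{j},
\]
where I have used that $\prod_{i=0}^{n-1}(t-i)=t\prod_{i=1}^{n-1}(t-i)$ and that the $j=0$ term of $\sum_j(-1)^{n-j}c(n,j)t^j$ vanishes since $c(n,0)=0$. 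Setting $P(t)=Q(t)$ gives the displayed formula for $P(t)$. One must check that $P(t)$ is genuinely a polynomial of degree $n$ — that is, that $(t-(n-2))$ divides $\sum_{j=1}^n(-1)^{n-j}c(n,j)t^j$; but this is immediate because $t-(n-2)$ is one of the factors of $\prod_{i=1}^{n-1}(t-i)$, which is a factor of $\sum_j(-1)^{n-j}c(n,j)t^j=\prod_{i=0}^{n-1}(t-i)$.

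Finally, for the coefficient $a_{n-1}(n)=\binom{n-1}{2}$: this is the coefficient of $h_1^n$-style leading data, and I would read it off either from the factorization of $P(t)$ — the coefficient $a_{n-1}(n)$ equals $-[t^{n-1}]Q(t)$, which is the sum of the roots of $Q$ other than the overall structure, namely $0+(-1)+1+2+\cdots+(n-1)-(n-2)=\binom{n-1}{2}$ by the same Newton's-identity bookkeeping (sum of all $n$ roots $0,-1,1,2,\ldots,n-1$ minus the removed root $n-2$ equals $\binom{n}{2}-1-(n-2)=\binom{n-1}{2}$) — or, more in keeping with the paper, directly from Theorem~\ref{ReflRepMain}: the coefficient of $h_1^n$ in $\mathrm{ch}\,S_{(n-1,1)}^{\otimes n}$ is $\binom{n-1}{2}$, and translating the $h$-basis dependence among $S_{(n-1,1)}^{\otimes k}$, $1\le k\le n$, into the tensor-power basis (the matrix $D_n$ is lower-triangular with $1$'s on the diagonal, per the discussion preceding the theorem) forces $a_{n-1}(n)=g_n(n,n)+g_n(n,n-1)=1+\left(\binom{n-1}{2}-1\right)=\binom{n-1}{2}$. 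The main obstacle is purely bookkeeping: making sure the overall signs and the single deleted root $t=n-2$ are tracked consistently across the three presentations (Vandermonde/Theorem~\ref{myBrauer}, Stirling generating function, and the $h$-expansion of Theorem~\ref{ReflRepMain}); once the factorization $P(t)=t(t+1)\prod_{i=1,i\ne n-2}^{n-1}(t-i)$ is pinned down, everything else is a short computation.
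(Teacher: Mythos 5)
Your proposal is correct and takes essentially the same route as the paper's proof: invoke Theorem~\ref{myBrauer} with the $n-1$ distinct nonzero character values $\{-1,1,2,\ldots,n-3,n-1\}$ of $S_{(n-1,1)}$, factor $P(t)=t(t+1)\prod_{i=1,\,i\ne n-2}^{n-1}(t-i)$, and rewrite $\prod_{i=0}^{n-1}(t-i)$ via the signed generating function for the Stirling numbers of the first kind (your sum-of-roots check of $a_{n-1}(n)=\binom{n-1}{2}$ is the same bookkeeping the paper records in the recurrence following the theorem). Only a cosmetic slip: your displayed set of nonzero values should read $\{-1,1,\ldots,n-3,n-1\}$, i.e.\ omitting $n-2$, exactly as your factorization already does.
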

   
   \begin{proof}   We invoke Theorem~\ref{myBrauer}. The linear combination of tensor powers in the statement translates into a polynomial equation for the character values, whose zeros are the $n$  distinct values $-1, 0, 1, \ldots, n-3, n-1,$ taken by the character of $S_{(n-1,1)}$.  Hence we have 
    \[P_n(t)=t^n -\sum_{k=1}^{n-1} a_k(n) t^{k}=(t+1)t
  \prod_{i=1, i\ne n-2}^{n-1}(t-i)=\frac{t+1}{(t-(n-2))}\prod_{i=0}^{n-1}(t-i).\]
  But $\prod_{j=0}^{n-1} (t-j) $ is the generating function for the signless Stirling numbers of the first kind \cite{St3EC1}, so the result follows. 
   \end{proof}

The preceding result gives a recurrence for the coefficients $a_k(n)$;
we have
\begin{align*} a_{n-1}(n)&=\binom{n-1}{2};\\
(n-2)a_j(n)-a_{j-1}(n)&=(-1)^{n-j}[c(n,j)-c(n,j-1)], 2\le j\le n-1;\\
(n-2) a_1(n)&= c(n,1) (-1)^{n-1}\\
\Longrightarrow a_1(n)&= \frac{(n-1)!}{n-2} (-1)^{n-1} =(-1)^{n-1}[(n-2)!+(n-3)!]
\end{align*}

 \begin{qn} The identity ~\eqref{NewStirlingIdentity?} holds for all $d=2,\ldots,k.$ 
 Is there a combinatorial explanation?
 \end{qn}
 
  \begin{qn} For fixed $k$ and $n,$ what do the positive integers $g_n(k,d)$ count?   Is there a combinatorial interpretation for $\beta_n(k)=\sum_{j=d}^{\min(n,k)}g_n(k,d),$  the multiplicity of the trivial representation in the top homology of $A^*_{n,k},$  in the nonstable case $k>n?$  Recall that for $k\le n$ this is the number $B_k^{\ge 2}$ of set partitions of $[k]$ with no singleton blocks, and is sequence OEIS A000296.  
 \end{qn}
 
 \begin{qn} Recall that $a_{n-1}(n)=\binom{n-1}{2}.$ Is there a combinatorial interpretation for the signed integers $a_i(n)$?  There are many interpretations for $(-1)^{n-1} a_1(n)= (n-2)!+(n-3)!,$ see OEIS A001048.  For $n\ge 4$ it is the size of the largest conjugacy class in $S_{n-1}.$  We were unable to find the other sequences $\{a_{i}(n)\}_{n\ge 3} $ in OEIS.
 \end{qn}

  \section{The subposet of normal words}
  
  Let $N_{n,k}$ denote the poset of normal words of length at most $k$ in $A^*_{n,k},$ again with  an artificial top element $\hat 1$ appended. Farmer  showed that
  \begin{thm} (Farmer \cite{F})  $\mu(N_{n,k})=(-1)^{k-1} (n-1)^{k} =\mu(A^*_{n,k}),$ and $A_{n,k}, N_{n,k}$ both have the homology of a wedge of $(n-1)^{k}$  spheres of dimension  $(k-1)$.
  \end{thm}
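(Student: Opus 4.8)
The statements about $A_{n,k}=A^*_{n,k}$ are exactly parts (2) and (3) of Theorem~\ref{Farmer}, so the plan is to transfer these to $N_{n,k}$ by exhibiting an $S_n$-equivariant retraction $A^*_{n,k}\to N_{n,k}$ and appealing to a standard homotopy lemma. For a word $w\in A^*$ let $\bar w$ denote its \emph{run-reduction}: the normal word obtained by replacing each maximal block of equal consecutive letters by a single copy of that letter. Then $\bar w$ is a subword of $w$, $\bar{\bar w}=\bar w$, and $\bar w$ is nonempty when $w$ is. Defining $r\colon A^*_{n,k}\to N_{n,k}$ by $r(w)=\bar w$ on words and $r(\hat 0)=\hat 0$, $r(\hat 1)=\hat 1$, I would first record that $r$ is $S_n$-equivariant (relabelling letters commutes with run-reduction), idempotent, satisfies $r(x)\le x$, and carries the proper part $A^*_{n,k}\setminus\{\hat 0,\hat 1\}$ into $N_{n,k}\setminus\{\hat 0,\hat 1\}$; all of these are immediate.

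The substantive step --- and the main obstacle --- is that $r$ is order-preserving, i.e.\ that $u$ a subword of $v$ implies $\bar u$ a subword of $\bar v$. I would prove this by writing $v=x_1^{a_1}\cdots x_r^{a_r}$ as the concatenation of its maximal runs, with $x_j\ne x_{j+1}$, so that $\bar v=x_1 x_2\cdots x_r$. Any embedding of $u$ into $v$ selects, inside each run $x_j^{a_j}$, a block of letters all equal to $x_j$; hence $u=x_{j_1}^{b_1}\cdots x_{j_s}^{b_s}$ for some $1\le j_1<j_2<\cdots<j_s\le r$ with all $b_i\ge 1$. Run-reducing $u$ then produces exactly the run-reduction of the length-$s$ word $x_{j_1}x_{j_2}\cdots x_{j_s}$, which is a subword of $x_{j_1}x_{j_2}\cdots x_{j_s}$, which is in turn a subword of $x_1 x_2\cdots x_r=\bar v$ because $j_1<\cdots<j_s$. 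Thus $\bar u\le\bar v$.

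With the retraction established, I would invoke the standard fact from poset topology that order-preserving maps $f\le g$ of posets induce homotopic maps on order complexes (see, e.g., \cite{Wa}): applying it to $\iota\circ r\le\mathrm{id}$ on the proper part of $A^*_{n,k}$ (with $\iota$ the inclusion of the proper part of $N_{n,k}$) and to $r\circ\iota=\mathrm{id}$ on the proper part of $N_{n,k}$ shows that $\Delta(N_{n,k}\setminus\{\hat 0,\hat 1\})$ and $\Delta(A^*_{n,k}\setminus\{\hat 0,\hat 1\})$ are homotopy equivalent, and since $r$ and $\iota$ are $S_n$-maps the equivalence can be taken $S_n$-equivariant. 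Consequently $\tilde H_*(N_{n,k})\cong\tilde H_*(A^*_{n,k})$ as $S_n$-modules in every degree, so by part (3) of Theorem~\ref{Farmer} the homology of $N_{n,k}$ is free of rank $(n-1)^k$ concentrated in degree $k-1$; that is, $N_{n,k}$ has the homology of a wedge of $(n-1)^k$ spheres of dimension $k-1$. Taking reduced Euler characteristics and using part (2) of Theorem~\ref{Farmer} then gives $\mu(N_{n,k})=(-1)^{k-1}(n-1)^k=\mu(A^*_{n,k})$.

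I expect the retraction argument to be the most efficient route, as it yields both assertions --- in fact an $S_n$-homotopy equivalence $\Delta(\bar N_{n,k})\simeq\Delta(\bar A^*_{n,k})$, which is stronger than the stated homological conclusion --- simultaneously. An alternative that avoids the homotopy lemma would be to compute $\mu(N_{n,k})$ directly via the M\"obius recursion (or a generating function analogous to Theorem~\ref{Bjgf}) and to establish separately that $N_{n,k}$ is homotopy Cohen-Macaulay by producing a dual shelling; this seems more laborious, and the only genuinely delicate point in either approach --- checking that run-reduction respects subword order --- is handled above.
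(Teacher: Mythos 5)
Your proof is correct, but it follows a genuinely different route from the paper, which in fact offers no proof of this statement at all: it is quoted as Farmer's theorem, and the surrounding text obtains the homotopy equivalence of the two order complexes indirectly, from Bj\"orner's dual CL-shellability of $A^*_{n,k}$ and the Bj\"orner--Wachs dual CL-shellability of $N_{n,k}$ (both complexes being wedges of $(n-1)^k$ spheres of dimension $k-1$), while the paper's own nearby contribution compares the lower intervals $(\hat 0,\alpha)$ in the two posets via Quillen's fibre theorem, filtering $A^*_{n,k}$ by deleting the non-normal words one rank at a time and using contractibility of $(\hat 0,\alpha)$ for non-normal $\alpha$ (Proposition~\ref{LowerIntTopo}). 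Your argument instead builds a decreasing, idempotent, order-preserving retraction (run-reduction) onto the normal-word subposet, with the order-preservation verified correctly by grouping an embedding of $u$ in $v$ according to the maximal runs of $v$, and then applies the standard comparable-maps homotopy lemma; combined with parts (2) and (3) of Theorem~\ref{Farmer} for $A^*_{n,k}$ this yields both assertions at once, and indeed the stronger conclusion that the proper parts of $N_{n,k}$ and $A^*_{n,k}$ are homotopy equivalent, without any shellability input. The trade-off is that the shellability route gives the sharper statement that each complex is actually homotopy equivalent to a wedge of spheres (not merely has its homology), whereas your retraction gives a direct, self-contained comparison of the two posets; also, your final remark that the equivalence ``can be taken $S_n$-equivariant'' is true but needs the equivariant form of the order homotopy lemma (or a fixed-point-poset check plus the equivariant Whitehead theorem) --- a point worth one sentence, though it is not needed for the statement being proved.
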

  
  Bj\"orner and Wachs \cite{BjWachs} showed that $N_{n,k}$ is dual CL-shellable and hence homotopy Cohen-Macaulay; it is therefore homotopy-equivalent to a wedge of $(n-1)^{k}$ spheres of dimension  $(k-1)$.
  The order complexes of the posets $A^*_{n,k}$ and $N_{n,k}$ are thus homotopy-equivalent.  
  
  Using Quillen's fibre theorem (\cite{Q})  we can establish a slightly stronger result:
  
  \begin{lem}  Let $\alpha\in N^*_{n,k}.$ Then  the intervals $(\hat 0,\alpha)_{N_{n,k}}$ and  $(\hat 0,\alpha)_{A^*_{n,k}}$ are ${\textrm{stab}(\alpha)}$-homotopy equivalent,  for the stabiliser subgroup ${\textrm{stab}(\alpha)}$ of $\alpha.$ 
  In particular the homology groups are all ${\textrm{stab}(\alpha)}$-isomorphic. 
  
  If $\alpha\in A^*_{n,k},$ but $\alpha\notin N^*_{n,k},$ then we know that the interval $(\hat 0, \alpha)_{ A^*_{n,k}}$ is contractible.
  \end{lem}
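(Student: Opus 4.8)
The plan is to construct an explicit $G_{\mathrm{stab}(\alpha)}$-equivariant retraction of $(\hat 0,\alpha)_{A^*_{n,k}}$ onto $(\hat 0,\alpha)_{N_{n,k}}$ and then invoke the equivariant form of Quillen's fibre theorem. The retraction is the \emph{reduction} map $\nu$, which collapses each maximal block of equal consecutive letters of a word to a single copy of that letter. First I would check that $\nu$ is well defined on the open interval: for any $\beta$ with $\hat 0<\beta<\alpha$, the word $\nu(\beta)$ is nonempty, is a subword of $\beta$ (take one letter from each block), is normal by construction, and therefore satisfies $\hat 0<\nu(\beta)\le\beta<\alpha$, so it lies in $(\hat 0,\alpha)_{N_{n,k}}$. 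Clearly $\nu$ restricts to the identity on normal words, and $\nu(\beta)\le\beta$ always holds, so once monotonicity is in hand $\nu$ will be a kernel (interior) operator below the identity.

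The one genuinely combinatorial point is that $\nu$ is order-preserving, which I would reduce to the claim: \emph{if $\gamma$ is normal and $\gamma\le v$ in subword order, then $\gamma\le\nu(v)$}. Write $v$ as the concatenation of its maximal blocks, with block-letters $c_1,\dots,c_t$, so that $\nu(v)=c_1c_2\cdots c_t$. Given an embedding of $\gamma=\gamma_1\cdots\gamma_m$ into $v$, consecutive letters $\gamma_i\ne\gamma_{i+1}$ are matched to positions lying in \emph{distinct} blocks of $v$, and since the matched positions increase these blocks occur in strictly increasing order; reading off their indices exhibits $\gamma$ as a subword of $c_1\cdots c_t=\nu(v)$. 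Transitivity of subword order then yields $u\le v\Rightarrow\nu(u)\le\nu(v)$, since $\nu(u)$ is a normal subword of $u$ and hence of $v$. Idempotence and $\nu(\beta)\le\beta$ are immediate.

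Let $j\colon(\hat 0,\alpha)_{N_{n,k}}\hookrightarrow(\hat 0,\alpha)_{A^*_{n,k}}$ be the inclusion. Then $\nu\circ j=\mathrm{id}$ and $j\circ\nu\le\mathrm{id}$ pointwise, so the standard comparison lemma for poset maps ($f\le g$ implies the induced maps of order complexes are homotopic via a natural transformation) shows that $j$ and $\nu$ are mutually inverse homotopy equivalences. Equivalently, one applies Quillen's fibre theorem to $j$: the fibre over $\beta$ is the set of nonempty normal subwords of $\beta$, which by the claim above has greatest element $\nu(\beta)$, so its order complex is a cone with apex $\nu(\beta)$, hence contractible. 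Since $\nu$, $j$, and these conical contractions all commute with the $S_n$-action by relabelling letters, and the apex $\nu(\beta)$ is fixed by the stabiliser of $\beta$, the equivariant Quillen fibre theorem upgrades this to a $G_{\mathrm{stab}(\alpha)}$-homotopy equivalence, and in particular to a $G_{\mathrm{stab}(\alpha)}$-isomorphism on all reduced homology groups. The final sentence of the lemma is immediate from Proposition~\ref{LowerIntTopo}.

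I expect the only delicate point to be the equivariance bookkeeping in the last step, namely confirming that the contraction of each fibre can be taken compatibly with the group action so that the equivariant fibre theorem genuinely applies; here this is automatic, since each fibre has a group-fixed greatest element. The run-block argument for monotonicity of $\nu$ is short, and the well-definedness checks are routine.
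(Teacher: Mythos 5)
Your proof is correct, but it follows a genuinely different route from the paper. The paper filters the big interval by rank: it sets $B_j=(\hat 0,\alpha)_{A^*_{n,k}}$ minus all non-normal words of rank $\ge j$, so that $B_1=(\hat 0,\alpha)_{N_{n,k}}$ and $B_{k+1}=(\hat 0,\alpha)_{A^*_{n,k}}$, and shows each inclusion $B_j\subset B_{j+1}$ is an equivariant homotopy equivalence by Quillen's fibre theorem; there the fibre under a non-normal word $w$ is the full open interval $(\hat 0,w)_{A^*_{n,k}}$, whose contractibility is imported from Proposition~\ref{LowerIntTopo} (hence ultimately from the M\"obius computation and dual CL-shellability). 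You instead build the explicit reduction map $\nu$ collapsing runs of equal letters, prove the one combinatorial lemma that a normal subword of $v$ embeds in $\nu(v)$ (your run-block argument is sound, and it gives monotonicity, idempotence, and $\nu\le\mathrm{id}$), and then conclude in a single step, either by the order-homotopy lemma for an equivariant kernel operator or by one application of the equivariant Quillen fibre theorem, the fibres being cones on the $\mathrm{stab}(\beta)$-fixed maximum $\nu(\beta)$. What each approach buys: the paper's argument reuses machinery it has already established and needs no new combinatorics, while yours is more self-contained and explicit --- it does not invoke shellability or the contractibility of lower intervals below non-normal words for the main equivalence, and it produces an actual equivariant deformation retraction rather than a zig-zag of inclusions. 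Your handling of equivariance (the $S_n$-action commutes with $\nu$, and each fibre has a group-fixed apex) is exactly the point that needs care, and you address it correctly; the last sentence of the lemma is, as you say, just Proposition~\ref{LowerIntTopo}.
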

  
  \begin{proof}  Let $J_m$ be the set of words of length $m$ that are \textit{not} normal.  Let 
  \[B_j=(\hat 0, \alpha)_{A^*_{n,k}}\backslash (\cup_{m=j}^k J_m) \] be the subposet obtained by removing all normal words at rank $j$ and higher.  Thus $B_1=(\hat 0, \alpha)_{N_{n,k}}.$  Set $B_{k+1}=(\hat 0, \alpha)_{A^*_{n,k}}.$  We claim that  the inclusion maps
  \begin{equation}
(\hat 0, \alpha)_{N_{n,k}}= B_1\subset B_2\subset \ldots B_j\subset B_{j+1}\subset B_{k+1}=(\hat 0, \alpha)_{A^*_{n,k}}
  \end{equation}
  are group equivariant homotopy equivalences.
  Note that 
  \[B_j=B_{j+1}\backslash \{\text{non-normal words of length } j+1\},\] and $B_j$ coincides with $B_{j+1}$ for the first $j$ ranks.  The fibres to be checked are $F_{\le w}=\{\beta\in B_j: \beta\le \alpha\},$ for $w\in B_{j+1}.$  If $w $ is a normal word in $B_{j+1},$ then $w\in B_j$ and  the fibre is the half-closed interval $(\hat 0, w]$ in $B_j; $ it is therefore contractible.  If $w\in B_{j+1}$ is not a normal word, then $w\notin B_j$ and the interval $(\hat 0, w)_{B_j}$ coincides with the same interval in $A^*_{n,k},$ so by Part (1) of Proposition~\ref{LowerIntTopo}, it is contractible.  Hence by Quillen's fibre theorem, the inclusion induces a homotopy equivalence.  
  \end{proof}
  
  \begin{prop}\label{WHnormal}  The Whitney homology modules of $A^*_{n,k}$ and $N_{n,k}$ are $S_n$-isomorphic
  In particular the $j$th Whitney homology of $N_{n,k}$ is  isomorphic to the $S_n$-action on the elements at rank   $j.$ 
  \end{prop}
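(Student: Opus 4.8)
The plan is to combine the equivariant form of Bj\"orner's Whitney-homology isomorphism~\eqref{WHBjdef} (established in \cite{Su0}) with Proposition~\ref{LowerIntTopo} and the $\mathrm{stab}(\alpha)$-homotopy equivalence of $(\hat 0,\alpha)_{N_{n,k}}$ and $(\hat 0,\alpha)_{A^*_{n,k}}$ proved in the preceding lemma. Both $A^*_{n,k}$ (Theorem~\ref{BjCLshellable}) and $N_{n,k}$ (\cite{BjWachs}) are Cohen-Macaulay and $S_n$-invariant, so each $W\!H_j$ is a genuine $S_n$-module, and~\eqref{WHBjdef} refines to the $S_n$-module decomposition
\[ W\!H_j(P)\;\simeq\;\bigoplus_{[\alpha]}\tilde{H}_{j-2}(\hat 0,\alpha)_P\uparrow_{\mathrm{stab}(\alpha)}^{S_n}, \]
the sum running over one representative $\alpha$ of each $S_n$-orbit of rank-$j$ elements of $P$.

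First I would record what this gives for $P=A^*_{n,k}$ and $1\le j\le k$: by Proposition~\ref{LowerIntTopo} the summand indexed by $\alpha$ vanishes unless $\alpha$ is a normal word, and for a normal word $\alpha$ of length $j$ the module $\tilde{H}_{j-2}(\hat 0,\alpha)_{A^*_{n,k}}$ is the trivial one-dimensional $\mathrm{stab}(\alpha)$-module; hence $W\!H_j(A^*_{n,k})\simeq\bigoplus_{[\alpha]}\mathbf{1}\uparrow_{\mathrm{stab}(\alpha)}^{S_n}$, the sum over $S_n$-orbits of normal words of length $j$. Now the rank-$j$ elements of $N_{n,k}$, for $1\le j\le k$, are exactly the normal words of length $j$, so the indexing $S_n$-sets, the orbit representatives and their stabilisers coincide with those just used. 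For each normal word $\alpha$ of length $j$ the preceding lemma gives a $\mathrm{stab}(\alpha)$-homotopy equivalence $(\hat 0,\alpha)_{N_{n,k}}\simeq(\hat 0,\alpha)_{A^*_{n,k}}$, so $\tilde{H}_{j-2}(\hat 0,\alpha)_{N_{n,k}}$ is again the trivial one-dimensional $\mathrm{stab}(\alpha)$-module. Substituting into the displayed decomposition yields $W\!H_j(N_{n,k})\simeq\bigoplus_{[\alpha]}\mathbf{1}\uparrow_{\mathrm{stab}(\alpha)}^{S_n}$, the identical sum, which proves $W\!H_j(N_{n,k})\simeq W\!H_j(A^*_{n,k})$ for $0\le j\le k$ (with $j=0$ being the trivial module on both sides). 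For the top degree $j=k+1$, acyclicity of Whitney homology (Theorem~\ref{WHmainSS}) expresses $W\!H_{k+1}$ as an alternating sum of $W\!H_0,\dots,W\!H_k$, so the isomorphism extends there as well. Finally, a direct sum of modules $\mathbf{1}\uparrow_{\mathrm{stab}(\alpha)}^{S_n}$ taken over orbit representatives for the $S_n$-action on the rank-$j$ elements of $N_{n,k}$ is by definition the permutation module on those elements, which proves the remaining statement of the proposition.

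The one point requiring care --- and the step I expect to be the main obstacle --- is the equivariance: it does not suffice that the intervals be homotopy equivalent as spaces, one needs their local homology modules to assemble into the induced $S_n$-module occurring in~\eqref{WHBjdef}. This is precisely what the equivariant refinement of Bj\"orner's isomorphism in \cite{Su0} provides, together with the fact that the equivalences furnished by the preceding lemma are $\mathrm{stab}(\alpha)$-equivariant; once both posets are known to be Cohen-Macaulay, so that each $W\!H_j$ is an honest module rather than a virtual one, the orbit-by-orbit comparison is forced and the proposition follows.
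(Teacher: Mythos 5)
Your argument is correct and is essentially the route the paper intends: the proposition is stated without a separate proof precisely because it follows, as you show, from the equivariant form of~\eqref{WHBjdef}, Proposition~\ref{LowerIntTopo}, and the $\mathrm{stab}(\alpha)$-equivariant homotopy equivalence of lower intervals established in the preceding lemma (the paper later also notes a second explanation via CW-posets). Your extra step handling the top degree via acyclicity is a fine way to cover $j=k+1$, where of course the identification with the rank-$j$ permutation module is only meant for $1\le j\le k$.
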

  
  This statement is false for the dual Whitney homology. For instance, $\mu(ab, abab)_{A^*_{n,k}}=+3,$ but $\mu(ab, abab)_{N_{n,k}}=+1.$ The first interval is a  rank-one poset with four elements $bab, aab, abb, aba,$ whereas the second consists only of two elements $aba, bab.$ 
  
  It is also easy to find examples showing that the rank-selected homology is not the same for each poset.  
  
  \begin{ex}
 Let $n=2$ and consider the rank-set $\{1,3\}$ for the poset $A^*_{2,k}$ and for its subposet of normal words $N_{2,k}$.
 
 The words of length 3, all of which cover the two rank 1 elements $a$ and $b,$ are 
 \[ aaa, aab, aba, abb, baa, bab, bba, bbb.\]
 It is clear that the M\"obius function values are 
 \[\mu(\hat 0, aaa)=0=\mu(\hat 0, bbb), \mu(\hat 0, w)=-1 \text{ for all } w\notin \{aaa, bbb\}.\]
 Hence the M\"obius number of the rank-selected subposet of all words is $-5,$ and from Theorem~\ref{rank-deletion} the $S_2$-representation on homology is $3\, S_{(2)}\oplus 2 \, S_{(1,1)}.$ The order complex is a wedge of 5 one-dimensional spheres.
 
 Now consider the corresponding rank-selected subposet of normal words:
 there are only two normal words of length 3, namely $aba, bab$ and hence the M\"obius number of the rank-selected subposet of normal words is $-1,$ with trivial homology representation.  The order complex is a  one-dimensional sphere.
 \end{ex}
 
  \begin{ex}
 More generally,  let $S$ be the rank-set $[2,k]$, and consider the posets  $A_{2,k}(S)$ and $N_{2,k}(S)$ obtained by deleting the atoms.  Then by Theorem~\ref{rank-deletion} the homology of $A_{2,k}(S)$ is 
 $ (k-1)S_{(1^2)}^{\otimes k} +k S_{(2)},$
 while the homology of the normal word subposet $N_{2,k}(S)$ is seen to be 
  $S_{(1^2)}^{\otimes k},$ which is either the trivial  or the sign module, depending on the parity of $k.$ 
 \end{ex}
 \begin{rk}
 In fact it is easy to see that $N_{2,k}$ is the ordinal sum \cite{St3EC1} of $k$ copies of an antichain of size 2, with a bottom and top element attached.  Hence for any subset $T$ of $[1,k]$, there is an $S_2$-equivariant poset isomorphism between $N_{2,k}(T)$ and $N_{2,|T|}.$ Since the $S_n$-homology of $N_{2,k}$ is easily seen to be the 
 $k$-fold tensor power of the sign representation, 
 this determines $\tilde{H}(N_{2,k}(T))$ for all rank-sets $T.$
 Also,  $S_2$ acts  on the chains of any rank-selected subposet $N_{2,k}(T)$ like $2^{|T|-1}$ copies of the regular representation of $S_2,$ since the $2^k$ chains of $N_{2,k}$ break up into orbits of size 2.
 \end{rk}
 
 Recall from \cite{St3EC1} that a finite graded poset $P$ with $\hat 0$ and $\hat 1$ is Eulerian if its M\"obius function $\mu_P$ satisfies $\mu(P)=(-1)^{\mathrm{rank}(y)-\mathrm{rank}(x)}$ for all intervals $(x,y)\subseteq (\hat 0, \hat 1).$
 It is known that all intervals $(x,y),  y\neq \hat 1,$ in $N_{n,k}$ are Eulerian (see e.g. \cite[Exercise 188]{St3EC1}). In fact Bj\"orner and Wachs observed in \cite{BjWachs} that for a finite alphabet $A=\{a_i: 1\le i\le n\},$ the poset of normal words without the top element
  $N_{n,k}\backslash\{\hat 1\}$  is simply Bruhat order on the Coxeter group with $n$ generators $a_i$ and relations $a_i^2=1.$ Thus by lexicographic shellability \cite{BjWachsAIM1982}, all intervals $(x,y),  y\neq \hat 1$ are homotopy equivalent to a sphere.

 In fact this makes $N_{n,k}\backslash\{\hat 1\}$ a $CW$-poset as defined in \cite{Bj-CW}, and hence its order complex is isomorphic to the face poset of a regular $CW$-complex $\mathcal{K}$ \cite[Proposition~3.1 ]{Bj-CW}.  It is easy to see from the definitions that if a finite group $G$ acts on a $CW$-poset $P$, then there is a $G$-module isomorphism between the $j$th Whitney homology $W\!H_j(P)$ and the $G$-action on the $(j-1)$-cells of the associated  
 $CW$-complex $\mathcal{K}(P)$, which are simply the elements of $P$ at rank $j$.  This provides another explanation for the observation of Proposition~\ref{WHnormal}.

  An EL-labelling of the dual poset of normal (or Smirnov) words appears in \cite{TiansiLi}.  In \cite{LiSu} the program of the present paper is carried out for the subposet of Smirnov words.


\bibliographystyle{amsplain.bst}

\end{document}